\newtheorem{thm}{{{Theorem}}}[section]
\newtheorem{prop}[thm]{{Proposition}}
\newtheorem{lem}[thm]{{Lemma}}
\newtheorem{cor}[thm]{{Corollary}}
\newtheorem{rem}[thm]{{Remark}}
\newtheorem{cond}[thm]{{Condition}}
\numberwithin{equation}{section}
\def\M{\mathrm{M}}
\def\bs{\backslash}
\def\al{\alpha}
\def\be{\beta}
\def\ve{\varepsilon}
\def\l{\langle}
\def\r{\rangle}
\def\1{\mathbf{1}}
\def\0{\mathbf{0}}
\def\ol{\overline}
\def\N{\mathbb{N}}
\def\C{\mathbb{C}}
\def\R{\mathbb{R}}
\def\Q{\mathbb{Q}}
\def\Z{\mathbb{Z}}
\def\A{\mathbb{A}}
\def\F{\mathbb{F}}
\def\GL{{\mathop{\mathrm{GL}}}}
\def\PGL{{\mathop{\mathrm{PGL}}}}
\def\SL{{\mathop{\mathrm{SL}}}}
\def\O{{\mathop{\textnormal{O}}}}
\def\SO{{\mathop{\mathrm{SO}}}}
\def\U{{\mathop{\textnormal{U}}}}
\DeclareMathOperator{\Hom}{Hom}
\DeclareMathOperator{\vol}{vol}
\DeclareMathOperator{\Ind}{Ind}
\DeclareMathOperator{\Ad}{Ad}
\DeclareMathOperator{\JL}{JL}
\def\Re{{\mathop{\mathrm{Re}}}}
\def\Tr{{\mathop{\mathrm{tr}}}}
\def\Hom{{\mathop{\mathrm{Hom}}}}
\def\diag{{\mathop{\mathrm{diag}}}}
\def\Ad{{\mathop{\mathrm{Ad}}}} 
\def\vol{{\mathop{\mathrm{vol}}}}
\def\d{{\mathrm{d}}}
\def\bsl{\backslash}
\def\inf{\infty}
\def\fin{\mathrm{fin}}
\def\fo{\mathfrak{o}}
\def\fO{\mathfrak{O}}
\def\cO{\mathcal{O}}
\def\cS{\mathcal S}
\def\trep{{\mathbbm{1}}}
\def\fa{{\mathfrak{a}}}
\def\cP{{\mathcal{P}}}
\def\cE{{\mathcal{E}}}
\def\cR{{\mathcal{R}}}
\def\GJ{{\mathrm{GJ}}}
\def\bK{{\mathbf{K}}}
\def\cK{{\mathcal{K}}}
\def\sK{{\mathscr{K}}}
\def\cF{{\mathcal{F}}}
\def\sF{{\mathscr{F}}}
\def\sG{{\mathscr{G}}}
\def\ff{{\mathfrak{f}}}
\def\tZ{{\tilde{Z}}}
\def\qG{{\mathcal{M}}}
\def\qK{{\mathcal{U}}}
\title[Zeta functions and nonvanishing theorems for toric periods on $\GL_2$]{Zeta functions and nonvanishing theorems for toric periods on $\GL_2$}
\date{\today}
\author[Miyu Suzuki]{Miyu Suzuki} 
\address{
Depertment of Mathematics,  Faculty of Science\\
Kyoto University \\
Kitashirakawa Oiwake-cho, Sakyo-ku,  Kyoto 606-8502,   JAPAN}
\email{suzuki.miyu.4c@kyoto-u.ac.jp}
\author[Satoshi Wakatsuki]{Satoshi Wakatsuki}
\email{wakatsuk@staff.kanazawa-u.ac.jp}
\address{
Faculty of Mathematics and Physics, Institute of Science and Engineering\\
Kanazawa University \\
Kakumamachi, Kanazawa, Ishikawa, 920-1192, JAPAN}
\begin{document}

\begin{abstract}
Let $F$ be a number field and $D$ a quaternion algebra over $F$.
Take a cuspidal automorphic representation $\pi$ of $D_\A^\times$ with trivial central cahracter.
We study the zeta functions with period integrals on $\pi$ for the perhomogeneous vector space $(D^\times\times D^\times\times\GL_2, D\oplus D)$.
We show their meromorphic continuation and functional equation, determine the location and orders of possible poles and compute the residue.
Arguing along the theory of Saito and computing unramified local factors, the explicit formula of the zeta functions is obtained.
Counting the order of possible poles of this explicit formula, we show that if $L(1/2, \pi)\neq0$, there are infinitely many quadratic extension $E$ of $F$ which embeds in $D$, such that $\pi$ has a nonvanishing toric period with respect to $E$.
\end{abstract}

\maketitle

\tableofcontents

\section{Introduction and main results}\label{sec:1}

Let $F$ be a number field, $\A$ the ad\'ele ring of $F$, and $D$ a quaternion algebra over $F$.
In this paper, we study global zeta functions with automorphic forms on $D_\A^\times\coloneqq (D\otimes\A)^\times$ for the perhomogeneous vector space $(D^\times\times D^\times\times\GL_2, D\oplus D)$. 
Our purpose is to prove the analytic properties of the global zeta functions, and present an explicit formula for them, which clarifies the relationship between our global zeta functions and toric periods. 
As one application of them, we derive a nonvanishing theorem for toric periods by using the analytic properties. 
In addition, this study is used to derive meanvalue theorems for toric periods and central values of automorphic $L$-functions in the subsequent paper \cite{SW}.

\subsection{Global zeta functions}

Set $G\coloneqq D^\times\times D^\times\times\GL_2$ and $V\coloneqq D\oplus D$.
The $F$-rational representation $\rho$ of $G$ on $V$ is defined by
    \[
    (x, y)\cdot\rho(g_1, g_2, g_3)\coloneqq (g_1^{-1}xg_2, g_1^{-1}yg_2)g_3, 
    \quad (g_1,g_2,g_3)\in G, \quad (x,y)\in V.
\]
Then, the triple $(G, \rho, V)$ is an $F$-form of a prehomogeneous vector space.
We write the regular locus of $V$ by $V^0$ (see \S\,\ref{sec:space} for details). 
Throughout this paper, we assume that $\pi$ is an irreducible cuapidal automorphic representation of $PD^\times_\A\coloneqq \A^\times\bsl D_\A^\times$ which is not $1$-dimensional. 
For a cusp form $\phi$ in $\pi$ and a Schwartz function $\Phi$ on $V(\A)$,
we define the global zeta function $Z(\Phi, \phi, s)$ with an automorphic form $\phi$ by
    \[
    Z(\Phi, \phi, s)\coloneqq \int_{H(F)\bs H(\A)}|\omega(h)|^s\phi(g_1) \overline{\phi(g_2)}\sum_{x\in V^0(F)}\Phi(x\cdot\rho(h)) \, \d h,
    \]
where $H$ is the quotient of $G$ by the kernel of $\rho\,\colon G\rightarrow\GL(V)$, $\d h$ is the Tamagawa measure on 
$H(\A)$, $\omega$ is the character attached to the fundamental relative invariant of $(G, \rho, V)$, and $(g_1, g_2, g_3)$ is a representative in $G$ of $h\in H$. 
The function $Z(\Phi, \phi, s)$ absolutely converges for large $\Re(s)$ (see Lemma \ref{lem:absconv}), is meromorphically continued to the whole $s$-plane (see Theorem \ref{thm:global}), and satisfies a functional equation (see Corollary \ref{cor:funct}).

\subsection{Toric periods and local zeta functions}\label{sec:1.2}

To explain our explicit formula, let us briefly discuss toric periods and Waldspurger's formula.
Let $\Sigma$ denote the set of places of $F$, $F_v$ the completion of $F$ at $v\, (\in\Sigma)$, and $D_v=D\otimes_F F_v$. 
Write $X(D)$ (resp. $X(D_v)$) for a set of representatives of isomorphism classes of quadratic \'etale $F$-subalgebras of $D$ (resp. $F_v$-subalgebra of $D_v$).
For each $E\in X(D)$ (resp.\,$\cE_v\in X(D_v)$), choose an element $\delta_E\in D$ (resp.\,$\delta_{\cE_v}\in D_v$) so that $E=F+F\delta_E$ and $\Tr(\delta_E)=0$ (resp.\,$\cE_v=F_v+F_v\delta_{\cE_v}$ and $\Tr(\delta_{\cE_v})=0$). 
Set $d_E\coloneqq \delta_E^2$ (resp.\,$d_{\cE_v}\coloneqq \delta_{\cE_v}^2$).

Take an element $E\in X(D)$. 
Unless otherwise mentioned, we assume $\delta_{E_v}=\delta_E$ and $d_{E_v}=d_E$ under the natural embedding $E\hookrightarrow E_v$ and $D\hookrightarrow D_v$.
Let $\eta_E=\otimes_{v\in\Sigma} \eta_{E_v}$ denote the quadratic character of $\A_F^\times/F^\times$ associated with $E$ via the global class field theory if $E$ is a field and the trivial character otherwise.
Set $\A_E\coloneqq\A_F\otimes_FE$.

Let $\pi=\otimes_{v\in\Sigma}\pi_v$ and $\langle\cdot,\cdot\rangle$ denote the Petersson inner product on $PD^\times\bsl PD_\A^\times$ (see \eqref{eq:innprod} for its definition). 
For each place $v\in\Sigma$, take a $D_v^\times$-invariant non-degenerate Hermitian pairing $\langle\cdot,\cdot\rangle_v$ on $\pi_v$ so that we have $\langle\cdot,\cdot\rangle=\prod_{v\in\Sigma}\langle\cdot,\cdot\rangle_v$. 
For $\cE_v\in X(D_v)$,  let $\d h_v=\frac{\d h_{\cE_v}}{\d^\times z_v}$ be the quotient measure on $F_v^\times\bs\cE_v^\times$ (see \eqref{eq:localmeah} for $\d h_{\cE_v}$ and \S\ref{sec:notation} for $\d^\times z_v$). 
The integral
    \[
    \alpha_{\cE_v}(\varphi_v,\varphi'_v)\coloneqq \int_{F_v^\times\bs \cE_v^\times}
    \langle\pi_v(h_v)\varphi_v, \varphi'_v\rangle_v\,\d h_v, 
    \hspace{10pt} \varphi_v, \varphi'_v\in\pi_v
    \]
converges absolutely and defines an element of $\Hom_{\overline{\cE_v^\times}\times \overline{\cE_v^\times}}(\pi_v\boxtimes\bar{\pi}_v, \C)$, 
Here we write $\overline{A}$ for the image of $A(\subset D_v)$ under the projection $D_v^\times\to PD_v^\times$.
Let $E\in X(D)$.
We define the Haar measure $\d h$ on $\A_F^\times E^\times\bs \A_E^\times$ by $\d h= (c_F^{-1} \d^\times z)\bs \d h_E$ (see \eqref{eq:dh_E} for $\d h_E$, \S\ref{sec:measure1} for $\d^\times z$, and $c_F$ is the residue of the finite part of $\zeta_F(s)$ at $s=1$).
A linear form $\cP_E$ on $\pi$ is defined by 
    \[
    \cP_E(\varphi)\coloneqq \int_{\A_F^\times E^\times\bs \A_E^\times}\varphi(h) \, \d h , 
    \hspace{10pt} \varphi \in \pi.
    \]
We say that $\pi$ is $E^\times$-distinguished if $\cP_E$ is not identically zero.
The linear form $\cP_E$ is called a toric period.
Similarly for $\cE_v\in X(D_v)$, we say that $\pi_v$ is $\cE_v^\times$-distinguished if $\Hom_{\overline{\cE_v^\times}}(\pi_v, \C)\neq0$.

Let $\zeta_F(s)$ be the Dedekind zeta function of $F$, $L(s, \eta_E)$ the Hecke $L$-function, $L(s,\pi)$ the principal $L$-function, and $L(s, \pi, \Ad)$ the adjoint $L$-function of $\pi$. 
For any finite set $S$ of places of $F$, we denote the partial Euler product of $\zeta_F(s)$, $L(s, \eta_E)$, $L(s,\pi)$, and $L(s, \pi, \Ad)$ by $\zeta_F^S(s)$, $L^S(s, \eta_E)$, $L^S(s,\pi)$, and $L^S(s, \pi, \Ad)$, respectively. 
We take $\phi_j=\otimes_{v\in\Sigma}\phi_{j,v}\in\pi$ $(j=1$, $2)$. 
In \cite{Wal2}, Waldspurger proved a formula which relates the toric period and the special values of automorphic $L$-functions: 
    \begin{equation}\label{eq:Wald}
    \cP_E(\phi_1)\, \overline{\cP_E(\phi_2)} = \frac{1}{|\Delta_F|} \,
    \frac{\zeta_F(2) \, L(\frac12, \pi) \, L(\frac12, \pi\otimes\eta_E)}
    {L(1, \pi, \Ad) \, L(1, \eta_E)^2} \,
    \prod_v \alpha_{E_v}^\#(\phi_{1,v}, \phi_{2,v}) .
    \end{equation}
Here, $|\Delta_F|$ denotes the absolute discriminant of $ F/\Q$ and
\begin{equation}\label{eq:normalized}
    \alpha_{E_v}^\#(\phi_v, \phi_v)=
    \frac{L(1, \pi_v, \Ad)L(1, \eta_{E_v})}
    {\zeta_{F_v}(2)L(\frac12, \pi_v)L(\frac12, \pi_v\otimes\eta_{E_v})} \,
    \alpha_{E_v}(\phi_v, \phi_v)    
\end{equation}  
is the normalized local period.
In particular, when $\phi_1=\phi_2=\phi=\otimes_{v\in\Sigma}\phi_v\in\pi$, we have
    \begin{equation}\label{eq:Wald3}
    |\cP_E(\phi)|^2=\frac{1}{|\Delta_F|} \,
    \frac{\zeta_F(2)L(\frac12, \pi)L(\frac12, \pi\otimes\eta_E)}
    {L(1, \pi, \Ad)L(1, \eta_E)^2} \,
    \prod_v\alpha_{E_v}^\#(\phi_v, \phi_v).
    \end{equation}
Note that the choice of our Haar measure on $PD_\A^\times$ is $2c_F$ times the Tamagawa measure.

We also need a local zeta function $Z_{\cE_v}(\Phi_v, \phi_v, s)$ with the local integral $\alpha_{\cE_v}$. 
Let $\d x_v$ denote the local Tamagawa measure on $V(F_v)$ (cf. \S\ref{sec:measure1}). 
We take a point $x_{\cE_v}\in V^0(F_v)$ for each $\cE_v\in X(D_v)$ (see Proposition \ref{prop:orbits}) and put $V_{\cE_v}(F_v)\coloneqq x_{\cE_v}\cdot \rho(G(F_v))$. 
Set
    \[
    Z_{\cE_v}(\Phi_v, \phi_v, s)
    \coloneqq\frac{2\,c_v}{L(1, \eta_{\cE_v})^2} \int_{V_{\cE_v}(F_v)}
    \alpha_{\cE_v}(\pi_v(g_1)\phi_v, \pi_v(g_2)\phi_v)\, |P(x_v)|_v^{s-2} \, \Phi_v(x_v) \d x_v,
    \]
where $\cE_v\in X(D_v)$, $x_v=x_{\cE_v}\cdot \rho(g_1,g_2,g_3)$, $c_v\coloneqq\zeta_{F_v}(1)$ if $v<\inf$, and $c_v\coloneqq 1$ if $v\mid \inf$.
It is known that $Z_{\cE_v}(\Phi_v, \phi_v, s)$ is absolutely convergent for large $\Re(s)$ and meromorphically continued to the whole $s$-plane (cf. \cite{Li4}).  
Note that $\pi_v$ is $\cE_v^\times$-distinguished if and only if there exists a test function $\Phi_v$ such that $Z_{\cE_v}(\Phi_v,\phi_v,s)\not\equiv 0$ (see Lemma \ref{lem:loc}).

\subsection{Explicit formula}

We have fixed a cuspidal automorphic representation $\pi=\otimes_{v\in\Sigma} \pi_v$ of $PD_\A^\times$ which is not $1$-dimensional. 
We now fix a decomposable vector $\phi=\otimes_{v\in\Sigma}\phi_v \in\pi$.  
We need the following conditions.
\begin{cond}\label{condition}
Let $S$ be a finite subset of $\Sigma$. 
For every $v\not\in S$, we suppose the following conditions.
\begin{itemize}
\item $v$ is a finite place which is not dyadic.
\item $D_v$ is split, in particular $PD_v^\times\simeq\PGL_2(F_v)$.
\item Let $K_v$ be the maximal compact subgroup corresponding to $\PGL_2(\fo_v)$ under a fixed isomorphism $PD_v^\times\simeq\PGL_2(F_v)$, where $\fo_v=\fo_{F,v}$ is the integer ring of $F_v$. 
\item $\pi_v$ is unramified and $\phi_v$ is the $K_v$-spherical vector, which is normalized so that $\langle \phi_v,\phi_v \rangle_v=1$. 
\item For every $E\in X(D)$, we have $d_{E_v}\in\fo_v\setminus\varpi_v^2\fo_v$, where $\varpi_v$ is a prime element of $F_v$. In addition, the maximal compact subgroup of $\overline{E_v^\times}$ is contained in $K_v$ (that is, $\overline{\fo_{E,  v}^\times}\subset K_v$).
\end{itemize}
\end{cond}
Since we have $PD_\A^\times=PD^\times \prod_{v\in S}PD_v^\times \prod_{v\notin S}K_v$ for sufficiently large $S$, by taking a suitable $PD^\times$-conjugate, we may assume that our fixed embedding $E\hookrightarrow D$ satisfies the last condition of Condition \ref{condition}. 
Under Condition \ref{condition}, we obtain $\alpha_{E_v}^\#(\phi_v,  \phi_v)= 1$ for any $v\notin S$ (cf. \cite{SW}*{Corollaries 29 and 30}). 
Note that this fact follows from a purely local argument. 
Therefore, the formula \eqref{eq:Wald3} is equivalent to
    \begin{equation}\label{eq:Wald2}
    |\cP_E(\phi)|^2=\frac{1}{|\Delta_F|} \,
    \frac{\zeta_F^S(2) \, L^S(\frac12, \pi) \, L^S(\frac12, \pi\otimes\eta_E)}
    {L^S(1, \pi, \Ad)\, L^S(1, \eta_E)\, L(1,\eta_E)} \,
    \alpha_{E,  S}(\phi),
    \end{equation}
where $\alpha_{E, S}(\phi)\coloneqq \prod_{v\in S}\alpha_{E_v}(\phi_v,\phi_v)$. 
Note that if both sides of \eqref{eq:Wald2} is not equal to zero,  then $\alpha_{E,  S}(\phi)^{-1}|\cP_E(\phi)|^2$ depends only on the isomorphism class of $E$ and is independent of its realization as a subalgebra of $D$.

Suppose that $S$ satisfies Condition \ref{condition} and set $X(D_S)=\prod_{v\in S}X(D_v)$.
Take an element $\cE_S=(\cE_v)_{v\in S}\in X(D_S)$.  
Set 
    \[
    X(D, \cE_S)=\{E\in X(D) \mid E_v\cong\cE_v \text{ for each } v\in S\}
    \]
and take $E\in X(D, \cE_S)$. 
In the case $\alpha_{E, S}(\phi)\neq0$, $\alpha_{E, S}(\phi)^{-1}|\cP_E(\phi)|^2$ is explicitly given by \eqref{eq:Wald2}.  
In the case $\alpha_{E, S}(\phi)=0$, by abuse of notation, from the viewpoint of \eqref{eq:Wald2} we formally define 
    \[
     \alpha_{E, S}(\phi)^{-1}|\cP_E(\phi)|^2\coloneqq \frac{1}{|\Delta_F|} \,
    \frac{\zeta_F^S(2) \, L^S(\frac12, \pi) \, L^S(\frac12, \pi\otimes\eta_E)}
    {L^S(1, \pi, \Ad)\, L^S(1, \eta_E)\, L(1,\eta_E)}  
    \] 
    when $\pi$ is $E^\times$-distinguished, and
    \[
     \alpha_{E, S}(\phi)^{-1}|\cP_E(\phi)|^2\coloneqq 0 \qquad \text{when $\pi$ is not $E^\times$-distinguished.}
    \]
Note that $\alpha_{E, S}(\phi)=0$ implies $\cP_E(\phi)=0$ even if $\pi$ is $E^\times$-distinguished.
In addition, we define
    \begin{equation*}
         d(\cE_S/E) \coloneqq  \prod_{v\in S}\left|\frac{d_{\cE_v}}{d_{E_v}}\right|_v^{\frac12}.
    \end{equation*}
Let $\ff_{E_v}$ be the conductor of the quadratic character $\eta_{E_v}$, $N(\ff_{E_v})$ its norm, and $N(\ff_E^S)=\prod_{v\notin S}N(\ff_{E_v})$. 
For $v\not\in S$, set $\lambda_v=q_v^{\frac12}(\alpha_v+\alpha_v^{-1})$ where $\alpha_v\in\C^\times$ is the Satake parameter of $\pi_v$ and $q_v$ is the order of the residue field of $F_v$.
The following is our explicit formula.
\begin{thm}\label{thm:explicit}
Assume that $S$ satisfies Condition \ref{condition}. 
Take a Schwartz function $\Phi=\otimes_{v\in\Sigma} \Phi_v$ on $V(\A)$ such that $\Phi_v$ is the characteristic function of $V(\fo_v)$ for every $v\notin S$. 
For sufficiently large $\Re(s)>0$, we obtain
    \[
    Z(\Phi,\phi,s)= \sum_{\cE_S=(\cE_v)_{v\in S} \in X(D_S)}   \prod_{v\in S} Z_{\cE_v}(\Phi_v,\phi_v,s)  \times  \xi(D, \cE_S, \phi, s),
    \]
    where
\begin{align*}
\xi(D, \cE_S, \phi, s)\coloneqq & \frac{ 1 }{2 \, |\Delta_F|^4  \, c_F }  \frac{\zeta^S_F(2s-1) \, 
    L^S(2s-1,\pi,\mathrm{Ad}) }{\zeta^S_F(2)^3} \\
    &\times \sum_{E\in X(D,\cE_S)} 
    \frac{L(1,\eta)^2 \, \alpha_{E,S}(\phi)^{-1}  |\cP_E(\phi)|^2\, d(\cE_S/E)\, \mathcal{D}_E^S(\pi,s)}
    {N(\mathfrak{f}_E^S)^{s-1}},
\end{align*}
$\mathcal{D}_E^S(\pi,s)\coloneqq \prod_{v\notin S}\mathcal{D}_{E_v}(\pi_v,s)$, and 
\begin{multline*}
    \mathcal{D}_{E_v}(\pi_v,s)\coloneqq \\
        \begin{cases} 
        1+q_v^{-2s+1}+q_v^{-2s}+q_v^{-4s+1} -2\eta_{E_v}(\varpi_v)\, q_v^{-2s}\lambda_v 
        & \text{if $\eta_{E_v}$ is unramified}, \\ 
        1+q_v^{-2s+1}
        & \text{if $\eta_{E_v}$ is ramified}.
        \end{cases}    
\end{multline*}
\end{thm}
\begin{proof}
This is a rewrite of Theorem \ref{thm:20200105t1}.
Note that the definition of the series $\xi(D, \cE_S, \phi, s)$ there is slightly different from the above one.
Their equality follows from the equation
\begin{multline*}
    L(1,\eta)^2 \, \alpha_{E,S}(\phi)^{-1}  |\cP_E(\phi)|^2\, d(\cE_S/E) = \\
    \frac{\zeta_F^S(2) \, L^S(\frac12, \pi) \, L^S(\frac12, \pi\otimes\eta_E)}
    {|\Delta_F| \, L^S(1, \pi, \Ad) }\times \prod_{v\in S} \frac{ L(1,\eta_{\cE_v})\, |d_{\cE_v}|_v^{\frac12}}{|d_{E_v}|_v^{\frac12}}     
\end{multline*}
which is valid whenever $\pi$ is $E^\times$-distinguished.
\end{proof}

In fact, as can be seen from the series $\xi(D, \cE_S, \phi, s)$ in the explicit formula, the global zeta function $Z(\Phi,\phi,s)$ is essentially a Dirichlet series of toric periods.
The series $\xi(D, \cE_S, \phi, s)$ absolutely converges for large $\Re(s)>0$ and is meromorphically continued to the whole $s$-plane. 
These properties follow from the properties of $Z(\Phi,\phi,s)$ and $Z_{\cE_v}(\Phi_v,\phi_v,s)$ together with the explicit formula of Theorem \ref{thm:explicit}.

\subsection{Nonvanishing theorems}

We will now discuss nonvanishing theorems for toric periods. 
First we would like to explain what our zeta functions lead to.
From the proof of the next theorem, we see that the existence of the poles of the global zeta functions implies nonvanishing of periods.
\begin{thm}\label{thm:nonvanishing423}
If $L(1/2,\pi)\neq 0$, then there exists an element $E\in X(D)$ such that $\pi$ is $E^\times$-distinguished. 
\end{thm}
\begin{proof}
Suppose $L(1/2,\pi)\neq 0$. 
By Proposition \ref{prop:202001241}, there exist a vector $\phi=\otimes_{v\in\Sigma}\phi_v\in\pi$ and a Schwartz function $\Phi$ such that $Z(\Phi,\phi,s)$ has a simple pole at $\frac12$, that is, $Z(\Phi,\phi,s)\not\equiv 0$. 
Take $S$ so that it satisfies Condition \ref{condition}.
Then we have $\xi(D, \cE_S, \phi, s)\not\equiv 0$ for some $\cE_S\in X(D_S)$ by Theorem \ref{thm:explicit}. 
Since $\xi(D, \cE_S, \phi, s)\not\equiv 0$ means that $\pi$ is $E^\times$-distinguished for some $E\in X(D,\cE_S)$, this completes the proof.   
\end{proof}

For $\cE_S\in X(D_S)$, set 
    \[
    X(D,\cE_S,\pi)=\{R\in X(D,\cE_S) \mid \text{$\pi$ is $R^\times$-distinguished} \}. 
    \]
\begin{thm}\label{thm:infinite}
Take an element $\cE_S\in X(D_S)$ and suppose $X(D, \cE_S, \pi)\neq\emptyset$
Then $X(D, \cE_S, \pi)$ is an infinite set.
In other words, there are infinitely many $L\in X(D,\cE_S)$ such that $\pi$ is $L^\times$-distinguished.
\end{thm}
\begin{proof}
Take $E\in X(D, \cE_S, \pi)$.
Assume to the contrary that $0<\# X(D,\cE_S,\pi)<\inf$. 
Then we can choose a large finite set $S'\supset S$ and an element $\cR_{S'}=(\cR_v)_{v\in S'}\in X(D_{S'})$ so that $X(D,\cR_{S'},\pi)=\{ E \}$. 
By Theorem \ref{thm:20200105t1} and Lemma \ref{lem:loc} for $\cR_{S'}$, we can choose a test function $\Phi=\otimes_{v\in\Sigma} \Phi_v$ so that $Z_{\cR_v}(\Phi_v,\phi_v,s)$ is entire, $Z_{\cR_v}(\Phi_v,\phi_v,1)\neq 0$ for all $v\in S'$ and
    \[
    Z(\Phi,\phi,s)=c'\left( \prod_{v\in S'} Z_{\cR_v}(\Phi_v,\phi_v,s) \right)  
    \frac{\zeta^{S'}_F(2s-1) \, L^{S'}(2s-1,\pi,\mathrm{Ad}) \, 
    \mathcal{D}_{E}^{S'}(\pi,s)}{N(\mathfrak{f}_E^{S'})^{s-1}}
    \]
for some constant $c'\neq 0$. 
Hence it follows from Theorem \ref{thm:global} that $\mathcal{D}_{E}^{S'}(\pi,s)$ is meromorphically continued to $\C$. 
We may suppose that $S'$ contains any finite places $v$ dividing small primes without loss of generality. 
In addition, we have 
    \[
    \mathcal{D}_{E_v}(\pi_v,s)-1-q_v^{-2s+1}=q_v^{-2s}+q_v^{-4s+1} -2\eta_v(\varpi_v)\, q_v^{-2s}\lambda_v=O(q_v^{-\frac32+\frac{7}{64}}), \quad q_v\to\inf
    \]
uniformly for $s\geq 1$ by a bound of $\lambda_v$ (see \cite{Kim} and \cite{BB}).
Hence $\mathcal{D}_{E}^{S'}(\pi,s)$ is absolutely convergent for $\Re(s)>1$ and $\Re(\mathcal{D}_{E}^{S'}(\pi,s))$ tends to $+\inf$ as $s\to 1+0$. 
This means that $\mathcal{D}_{E}^{S'}(\pi,s)$ has a pole at $s=1$ and hence $Z(\Phi,\phi,s)$ has a pole at $s=1$. 
This contradicts Theorem \ref{thm:global}.
\end{proof}

We will now discuss a comparison of our nonvanishing theorems with prior studies. 
In a series of papers \cites{Wal1, Wal2, Wal3, Wal4}, Waldspurger obtained a detailed description of the automorphic discrete spectrum of the metaplectic group $\mathrm{Mp}_2(\A)$.
As noted in the introduction of \cite{GI}, one of the important steps in the argument of \cites{Wal4} is to show the nonvanishing of the central $L$-value $L(\frac12, \pi\otimes\chi)$ twisted by some quadratic character $\chi$ for any irreducible cuspidal automorphic representation $\pi$ of $\PGL_2(\A)$. 
Waldspurger proved such nonvanishing by using a correspondence of automorphic representations between $\GL_2(\A)$ and its double cover (\cite{Wal4}*{Th\'eor\`em 4}).

Friedberg and Hoffstein \cite{FH} proved that there are infinite number of quadratic characters $\chi$ with $L(\frac12, \pi\otimes\chi)\neq 0$.
We briefly review their result.

Fix a finite set $S$ of places of $F$ and a quadratic character $\chi_0$ of $\A^\times/F^\times$.
Let $\Psi(S; \chi_0)$ denote the set of quadratic characters $\chi$ of $\A^\times/F^\times$ such that $\chi_v=\chi_{0,v}$ for all $v\in S$.
The next theorem is \cite{FH}*{Theorem B}.

\begin{thm}\label{thm:FH}
Let $\pi'$ be an irreducible cuspidal automorphic representation of $\GL_2(\A)$ which is self-dual.
Suppose that we have $\ve(1/2, \pi'\otimes\chi)=1$ for some $\chi\in\Psi(S;\chi_0)$.
Then there are infinitely many $\chi\in\Psi(S;\chi_0)$ such that $L(1/2, \pi'\otimes\chi)\neq0$.
\end{thm}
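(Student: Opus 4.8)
The plan is to study a two--variable Dirichlet series whose coefficients are the twisted $L$-functions $L(s,\pi'\otimes\chi)$, following the multiple Dirichlet series method of Friedberg--Hoffstein. Fix once and for all representatives so that the nontrivial $\chi\in\Psi(S;\chi_0)$ are indexed by square classes of elements $d$ (of $F^\times$, or of a suitable ring of $S$-integers) with prescribed behaviour at $S$; write $\chi_d$ for the corresponding quadratic character and $\pi'_d=\pi'\otimes\chi_d$, which is again self-dual, so that $L(s,\pi'_d)$ is entire and satisfies a functional equation under $s\mapsto1-s$. One then considers
\[
Z(s,w)=\sum_{d}L(s,\pi'_d)\,a_d(s)\,N(d)^{-w},
\]
where $N(d)$ is the norm of the conductor of $\chi_d$ and the $a_d(s)$ are ``correction factors'' --- finite Euler products supported on the ramified and dyadic places, together with a fixed archimedean factor --- designed so that the functional equations below become exact. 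The series converges absolutely for $\Re(s)$ and $\Re(w)$ large.

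The first, essentially formal, point is that for $\Re(w)$ large the sum converges locally uniformly in $s$ on the whole plane (each $L(s,\pi'_d)$ is entire and, by convexity, of polynomial growth in $d$ uniformly on compact subsets of the $s$-plane); hence $w\mapsto Z(1/2,w)=\sum_d L(1/2,\pi'_d)\,a_d(1/2)\,N(d)^{-w}$ is a genuine Dirichlet series for $\Re(w)$ large. The substance is the meromorphic continuation of $Z(s,w)$ to $\C^2$ together with a finite group of functional equations. One functional equation, sending $Z(s,w)$ to $Z(1-s,w')$ for an explicit affine $w'=w'(s,w)$ (up to gamma and $\ve$ factors), comes from the functional equations of the $L(s,\pi'_d)$ combined with reciprocity for the quadratic Gauss sums occurring in $\ve(s,\pi'_d)$; the second, fixing $s$ and reflecting $w$, comes from realizing $Z(s,w)$ --- up to elementary factors --- as a Rankin--Selberg integral pairing a fixed cusp form $\varphi\in\pi'$ with a half--integral weight (metaplectic) Eisenstein series $E(\,\cdot\,,w)$ on the double cover $\wt{\GL}_2(\A)$, whose Fourier--Whittaker expansion generates the sum over $d$ and whose functional equation in $w$ transfers to $Z$. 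Tracking the scattering terms through the group generated by these two involutions yields the continuation and shows that $Z(s,w)$ is holomorphic off an explicit polar divisor.

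Granting this, the one--variable function $w\mapsto Z(1/2,w)$ --- a priori defined only for $\Re(w)$ large --- continues to a meromorphic function with a pole at an explicit point $w=w_0$, whose principal part is an explicit constant times a product of local factors over $S$ (its analogue for the family of quadratic Dirichlet $L$-functions is governed by the pole of the Dedekind zeta function of $F$). This is where the hypothesis is used: the assumption that $\ve(1/2,\pi'\otimes\chi)=1$ for some $\chi\in\Psi(S;\chi_0)$ is precisely what forces that constant to be nonzero, so that the pole is genuinely present; were every global sign in the family equal to $-1$, every central value would vanish for trivial reasons and the analysis would, consistently, produce no such pole. Now suppose, for contradiction, that $L(1/2,\pi'\otimes\chi)=0$ for all but finitely many $\chi\in\Psi(S;\chi_0)$. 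Then the Dirichlet series $Z(1/2,w)=\sum_d L(1/2,\pi'_d)\,a_d(1/2)\,N(d)^{-w}$ has only finitely many nonzero terms, so it is a finite Dirichlet polynomial and therefore entire in $w$, contradicting the pole at $w_0$. Hence $L(1/2,\pi'\otimes\chi)\neq0$ for infinitely many $\chi\in\Psi(S;\chi_0)$.

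The hard part is the middle step: doing the local analysis at the ramified and dyadic places so that the correction factors $a_d(s)$ make both functional equations exact and keep the generated group finite, and --- most delicately --- computing the continued function $Z(1/2,w)$ near $w_0$ precisely enough to see that the sign hypothesis forces the relevant constant to be nonvanishing. (An alternative, closer to the approach of the present paper, would encode the values $L(1/2,\pi'\otimes\chi_d)$ in the principal parts of a ``period'' zeta function attached to a prehomogeneous vector space --- the space of binary quadratic forms, in the style of Shintani and F.~Sato --- and read off the nonvanishing from the order of its poles; the analytic core is of the same type.)
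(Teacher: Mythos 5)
The paper does not prove this statement. Theorem \ref{thm:FH} is imported verbatim from Friedberg and Hoffstein \cite{FH}; the only remark the paper makes about the proof is that it ``is based on a technique of analytic number theory'' and that they ``studied poles of a Dirichlet series with coefficients given by $L$-functions.'' There is therefore no in-paper proof to compare your outline against. Taken as a summary of the Friedberg--Hoffstein argument, your sketch does capture the recognizable structure: a Dirichlet series whose coefficients are the twisted central values, continuation via the functional equations of $L(s,\pi'\otimes\chi_d)$ together with a Rankin--Selberg integral against a metaplectic Eisenstein series, specialization to $s=1/2$, and the observation that the sign hypothesis forces the residue to be nonzero so that the pole is genuine and hence infinitely many coefficients are nonzero. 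As you acknowledge yourself, the local bookkeeping at ramified and dyadic places (the correction factors $a_d(s)$, the finiteness of the group of functional equations, and the computation of the residue) is only gestured at, so this is a plan of attack rather than a proof.

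One correction is warranted on your closing parenthetical. You suggest that the prehomogeneous-zeta-function method ``of the present paper'' would give an alternative route to Theorem \ref{thm:FH}. That overstates what the paper achieves. The paper's zeta function is built on the space $(D^\times\times D^\times\times\GL_2,\, D\oplus D)$ (not the space of binary quadratic forms), encodes the toric periods $|\cP_E(\phi)|^2$ rather than the $L$-values directly, and yields only the strictly weaker Theorem \ref{thm:main}, in which the local components at the finitely many bad places cannot be prescribed freely. The authors state explicitly in \S\ref{sec:20200424} that extending their result to cover the full strength of Theorem \ref{thm:FH} would require a detailed analysis of the local gamma factors $\gamma_{\cE_v,\cR_v}(s,\phi_v)$ in Wen-Wei Li's local functional equations, which they do not carry out. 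So the prehomogeneous approach, as developed in this paper, does not reprove Theorem \ref{thm:FH}; on the contrary, the paper uses Theorem \ref{thm:FH} as external input for the ``short'' proof of its own main theorem.
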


One can rewrite this theorem on special values of $L$-functions in terms of nonvanishing of periods by using another theorem from \cite{Wal4}.
We see that \ref{thm:FH} has following corollary. 
For each non-trivial $\chi\in\Psi(S;\chi_0)$, let $E_\chi$ be the corresponding quadratic extension of $F$ via the global class field theory.
\begin{cor}\label{cor:FH-period}
Let $\pi'$ be an irreducible cuspidal automorphic representation of $\GL_2(\A)$ with trivial central character.
Suppose that we have $\ve(1/2, \pi'\otimes\chi)=1$ for some $\chi\in\Psi(S;\chi_0)$ and $L(1/2, \pi')\neq0$.
Then there are infinitely many $\chi\in\Psi(S;\chi_0)$ such that there exists a quaternion algebra $D_\chi$ over $F$ such which $E_\chi$ embeds and an irreducible cuspidal automorphic representation $\pi$ of $D_{\chi, \A}^\times$ which is $E_\chi^\times$-distinguished and $\pi'=\JL(\pi)$.
\end{cor}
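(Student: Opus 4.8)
The plan is to deduce the corollary formally by combining Theorem~\ref{thm:FH} with Theorem~\ref{thm:W2}(2); no new analytic or representation-theoretic input is required. First I would observe that $\pi'$ is automatically self-dual: for $\GL_2$ one has $\pi'^\vee\cong\pi'\otimes\omega_{\pi'}^{-1}$, and since $\omega_{\pi'}$ is trivial by hypothesis, $\pi'^\vee\cong\pi'$. Thus $\pi'$ meets the hypotheses of Theorem~\ref{thm:FH} --- it is cuspidal, self-dual, and satisfies $\ve(1/2,\pi'\otimes\chi)=1$ for some $\chi\in\Psi(S;\chi_0)$ --- so Theorem~\ref{thm:FH} furnishes infinitely many $\chi\in\Psi(S;\chi_0)$ with $L(1/2,\pi'\otimes\chi)\neq0$.

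Next, given such a $\chi$, I would assume it non-trivial, which removes at most one element from the infinite set just produced, so that $E=E_\chi$ is a genuine quadratic field extension of $F$ with associated quadratic character $\eta=\chi$. The standing hypothesis gives $L(1/2,\pi')\neq0$, and the output of the first step gives $L(1/2,\pi'\otimes\eta)=L(1/2,\pi'\otimes\chi)\neq0$; hence $L(1/2,\pi')L(1/2,\pi'\otimes\eta)\neq0$. Theorem~\ref{thm:W2}(2), applied with this $E$, then yields a unique $D_\chi\in X(E_\chi)$ together with an irreducible cuspidal automorphic representation $\pi$ of $D_{\chi,\A}^\times$ which is $E_\chi^\times$-distinguished and satisfies $\pi'=\JL(\pi)$. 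Cuspidality of $\pi$ is automatic: if $D_\chi$ is split then $\pi=\pi'$, and if $D_\chi$ is ramified somewhere the local obstruction at a ramified place excludes any one-dimensional or residual constituent.

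Running this over the infinitely many admissible $\chi$ from the first step finishes the argument. I do not expect any genuine obstacle: all the substance lives in Theorems~\ref{thm:FH} and~\ref{thm:W2}, and the only thing to watch is the harmless bookkeeping that discarding one (or finitely many) character from an infinite family still leaves an infinite family.
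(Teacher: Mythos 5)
Your proposal is correct and matches the argument the paper has in mind: the corollary is presented as an immediate consequence of Theorem~\ref{thm:FH} combined with Theorem~\ref{thm:W2}(2), and your write-up fills in exactly the bookkeeping the paper leaves implicit (in particular the observation that trivial central character forces self-duality for $\GL_2$, so Theorem~\ref{thm:FH} applies, and the harmless discard of the trivial character). The closing paragraph on cuspidality of $\pi$ is unnecessary since Theorem~\ref{thm:W2}(2) already asserts it, and the stated justification there is not quite right as phrased, but this does not affect the argument.
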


Our results, Theorems \ref{thm:nonvanishing423} and \ref{thm:infinite}, together lead to the following assertion.
\begin{thm}\label{thm:main}
Let $\pi$ be an irreducible cuspidal automorphic representation of $D_\A^\times$ with trivial central character which is not $1$-dimensional.
Suppose we have $L(1/2, \pi)\neq0$.
Then we can take a quadratic \'etale algebra $\cE_v$ over $F_v$ for each $v\in S$ so that there are infinitely many $E\in X(D)$ such that $\pi$ is $E^\times$-distinguished and $E_v=\cE_v$ for any $v\in S$.
\end{thm}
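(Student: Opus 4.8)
The plan is to follow the Friedberg--Hoffstein strategy of studying the analytic behaviour of a Dirichlet series whose coefficients encode toric periods, but to replace the ad hoc Dirichlet series by the zeta function attached to the prehomogeneous vector space $(D^\times\times D^\times\times\GL_2, D\oplus D)$ introduced earlier in the paper. Recall that the integration of a matrix coefficient (or more precisely the period $\mathcal{P}$) against a Schwartz--Bruhat function on $D_\A\oplus D_\A$ produces a global zeta integral $Z(s,\Phi,\pi)$, and its meromorphic continuation, functional equation, and the location/order of poles together with the residue were established in the earlier sections. The key arithmetic input is that, after computing unramified local factors along the lines of Saito's theory, the explicit formula for $Z(s,\Phi,\pi)$ is a sum over $F$-isomorphism classes (equivalently over orbits in the prehomogeneous space) in which the relevant orbits correspond to embeddings of quadratic \'etale algebras $E$ into $D$, and the contribution of each such orbit carries a factor proportional to the toric period $\mathcal{P}_E$ (or to $L(1/2,\pi'_E)$, which by Theorem~\ref{thm:W2} is a non-negative multiple of $|\mathcal{P}_E|^2$).

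First I would fix, for each $v\in S$, a local quadratic \'etale algebra $\mathcal{E}_v/F_v$ that embeds into $D_v$ and such that the local period/$L$-factor at $v$ is non-vanishing for $\pi_v$ with respect to $\mathcal{E}_v$; this is possible by the local theory of Tunnell--Saito (the local dichotomy picks out exactly one of the two local \'etale algebras for each $v$, namely the one matching the sign $\ve(1/2,\pi'_{E,v})$ against the invariant of $D_v$), combined with the hypothesis that $\pi$ is infinite-dimensional so that the relevant local $L$- and $\ve$-factors behave as for a genuine $\GL_2$ representation. Having pinned down the local components at $S$, I would choose the Schwartz--Bruhat function $\Phi=\otimes_v\Phi_v$ so that at each $v\in S$ the local zeta integral is supported on orbits hitting the prescribed local algebra $\mathcal{E}_v$, and at the archimedean and remaining finite places it is the standard (e.g.\ Gaussian / characteristic function of a lattice) choice for which the unramified computation applies.

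Next, the heart of the argument is a contradiction via pole-counting. Suppose, for contradiction, that only finitely many $E\in X(D)$ with $E_v=\mathcal{E}_v$ for $v\in S$ give a non-vanishing toric period. Then in the explicit formula the ``main term'' Dirichlet series built from the $E$-summands would be a finite sum, hence entire, so $Z(s,\Phi,\pi)$ would have fewer poles (counted with multiplicity) than the abstract analysis of Sections~2--3 forces it to have: specifically, the residue formula proved earlier shows that $Z(s,\Phi,\pi)$ has a pole of a prescribed order at a specific point $s_0$ with residue proportional to $L(1/2,\pi)$ times local factors at $S$, all of which are non-zero by our hypothesis $L(1/2,\pi)\neq 0$ and our choice of $\Phi$. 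But if the $E$-sum were finite, that pole could only come from the finitely many explicit terms, and a direct estimate (each individual term, being essentially $L(s+c,\pi'_{E})$ times elementary factors, contributes a pole of strictly smaller order than required) shows that the total order is too small --- contradiction. Therefore infinitely many $E$ with the prescribed local behaviour at $S$ must be $E^\times$-distinguished.

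The main obstacle I expect is the precise bookkeeping of poles in the explicit formula: one must verify that the ``singular'' orbits (the ones not corresponding to honest embeddings $E\hookrightarrow D$, e.g.\ the zero locus of the relevant invariant) do not themselves produce a pole of the critical order at $s_0$, so that the required high-order pole genuinely forces infinitely many regular $E$-terms; this is exactly where Saito's analysis of the prehomogeneous zeta function and the careful evaluation of unramified local factors (to identify each coefficient as the correct $L$-value and to control the abscissa of convergence of the resulting Dirichlet series in $E$) must be invoked. A secondary technical point is ensuring, using the local Tunnell--Saito dichotomy together with the non-triviality hypothesis on $\pi$, that the prescribed local algebras $\mathcal{E}_v$ can be chosen simultaneously compatible with embeddability into $D$ and with non-vanishing local factors, so that the global residue we extract is genuinely non-zero.
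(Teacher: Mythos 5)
Your high-level strategy (Saito-style orbit decomposition plus a pole-counting contradiction against the explicit formula) matches the paper, but both load-bearing steps are incorrect as you have stated them.

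\textbf{Selection of $\cE_v$.} You claim the local Tunnell--Saito dichotomy directly furnishes, for each $v\in S$, an $\cE_v\hookrightarrow D_v$ with non-vanishing local period. Tunnell--Saito tells you, for a \emph{given} $\cE_v$, which side of the dichotomy $\pi_v$ falls on; it does not assert that some $\cE_v$ works. When $D_v$ is division you need a quadratic field $\cE_v/F_v$ with $\ve(1/2,\pi'_{\cE_v,v})=-1$, and it is precisely the existence of such a family $\{\cE_v\}_{v\in S}$ that the introduction flags as ``not easy to prove in general'' --- indeed, that existence is presented as a \emph{consequence} of Theorem~\ref{thm:main}, not an input. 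The paper sidesteps this: Theorem~\ref{thm:nonvanishing423} produces $\cE_S$ with $X(D,\pi,\cE_S)\neq\emptyset$ by choosing $\Phi,\phi$ so that $Z(\Phi,\phi,\overline\phi,s)$ has a pole at $s=1/2$ (Proposition~\ref{prop:202001241}, using $L(1/2,\pi)\neq0$), and then reading off a non-vanishing $\cP_E$ from the orbit decomposition (Lemma~\ref{lem:Saito}). There is no local existence argument.

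\textbf{The pole-counting contradiction.} Your contradiction is ``the pole at $s_0$ with residue $\propto L(1/2,\pi)$ must come from infinitely many terms, but a finite sum gives too small an order.'' This is both at the wrong point and in the wrong direction. First, once you choose $\Phi_v$ supported in $V_{\cE_v}(F_v)$ (as you must, to isolate a single orbit-class $\cE_S$ in the explicit formula; see Lemma~\ref{lem:202001251}), the restriction $(\Phi_K)_1(x_1)=\Phi_K(x_1,0)$ vanishes because $(x_1,0)$ is singular, so the principal-part residue of Theorem~\ref{thm:global} at $s=1/2$ is killed and $Z$ no longer has that pole at all; your ``prescribed pole $\propto L(1/2,\pi)$'' simply isn't there for that $\Phi$. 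Second, your claim that a finite $E$-sum would be ``entire'' is false: $\xi(s,D,\phi,\cE_S)$ carries $\zeta^S_F(2s-1)$ and the infinite Euler product $\mathcal{D}_E^S(\pi,s)$ even when the $E$-sum has a single term. The paper's actual contradiction (Theorem~\ref{thm:infinite}) is the opposite of yours: if $X(D,\pi,\cE_S)$ were finite, enlarge $S$ to $S'$ so the sum is a single $E'$; then the right-hand side of the explicit formula has a pole at $s=1$ (since $\zeta^{S'}_F(2s-1)$ has a simple pole there and $\Re\,\mathcal{D}_{E'}^{S'}(\pi,s)\to+\infty$ as $s\to1^+$, so no cancellation can occur), whereas $Z(\Phi,\phi,\overline\phi,s)$ is holomorphic at $s=1$ by Theorem~\ref{thm:global}. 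It is an \emph{unwanted} pole at $s=1$, not a \emph{missing} pole at $s=1/2$, that gives the contradiction.
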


Compare Theorem \ref{thm:main} with Corollary \ref{cor:FH-period}.
There seems to be no direct implication in neither direction.

\subsection{Functional equations}\label{sec:20200424}

Recall that $D$ is a quaternion algebra over a number field $F$, $\pi$ is an irreducible cuspidal automorphic representation of $PD^\times_\A$ and $\phi=\otimes_v\phi_v\in\pi$ is a cusp form.

The local functional equations of our zeta functions are established by the research of Wen-Wei Li in much more general setting, see \cite{Li4}.
Here, we give a functional equation of the series $\xi(D, \cE_S, \phi, s)$ by combining his local functional equations with the functional equation of the global zeta function.

Define the local zeta function $Z_{\cE_v}^\vee(\phi_v,\Phi_v,s)$ for the contragredient representation $\rho^\vee$ in the same manner as $Z_{\cE_v}(\phi_v,\Phi_v,s)$. 
Li's local functional equation \cite{Li4}*{Theorem 5.6} states that there exist meromorphic functions $\gamma_{\cE_v,\cR_v}(\phi_v, s)$ on $\C$ which satisfy the local functional equation
\begin{equation}\label{eq:glfunct}
Z_{\cE_v}^\vee(\widehat{\Phi_v},\phi_v,2-s)=\sum_{\cR_v\in X(D_v)} \gamma_{\cE_v,\cR_v}(\phi_v, s)\, Z_{\cR_v}(\Phi_v, \phi_v,s)
\end{equation}
for any Schwartz function $\Phi_v$ on $V(F_v)$.
Here $\widehat{\Phi_v}$ denotes the Fourier transform.
Note that in order to derive \eqref{eq:glfunct} from \cite{Li4}*{Theorem 5.6}, we need the multiplicity one of toric periods $\dim \mathrm{Hom}_{\cE_v^\times}(\pi_v,\trep)\leq 1$. 
Using Theorem \ref{thm:explicit}, \eqref{eq:glfunct} and the functional equation $Z(\Phi,\phi,
s)=Z^\vee(\widehat\Phi,\phi,2-s)$ (cf. Corollary \ref{cor:funct}), we obtain
\begin{equation}\label{eq:xifunct}
\xi(D, \cE_S, \phi, s)  
=\sum_{\cR_S\in X(D_S)} \left(\prod_{v\in S}\gamma_{\cR_v,\cE_v}(\phi_v, s) \right)  
\xi(D, \cR_S, \overline{\phi}, 2-s).
\end{equation}
Here, $S$ is a finite set of places satisfying Condition \ref{condition} as in Theorem \ref{thm:explicit}.
The explicit computation of the gamma factors $\gamma_{\cR_v,\cE_v}(\phi_v, s)$ is carried out in \cite{Sato2} and \cite{Sato3} for some representations of real groups.
In the following proposition, we will show a nonvanishing result at the real place by using the functional equation \eqref{eq:xifunct} and an analytic property of $\gamma_{\cR_v,\cE_v}(\phi_v, s)$, which follows from \cite{Sato3}*{Theorem 2}.
\begin{prop}
Suppose $F=\Q$, $D_\inf=\M_2(\R)$, and $\pi_\inf$ is a spherical principal series of $PD_\infty^\times=\PGL_2(\R)$.
Assume $L(1/2, \pi)\neq 0$.
Then, there are infinitely many real (resp. imaginary) quadratic fields $E\in X(D)$ such that $\pi$ is $E^\times$-distinguished.
\end{prop}
\begin{proof}
For $\cE_\inf=\R\oplus \R$ or $\C$, recall that $X(D,\cE_\inf,\pi)$ is the set of $R\in X(D,\cE_\inf)$ such that $\pi$ is $R^\times$-distinguished. 
By Theorem \ref{thm:infinite} it is sufficient to prove that neither $X(D,\R\oplus \R,\pi)$ nor $X(D,\C,\pi)$ is empty. 
From the assumption $L(1/2, \pi)\neq 0$ and Theorem \ref{thm:nonvanishing423}, we see that not both of the $X(D,\R\oplus \R,\pi)$ and $X(D,\C,\pi)$ are empty. 
Hence we need to show the following two cases do not occur:
\begin{itemize}
    \item[(i)] $X(D,\R\oplus \R,\pi)\neq \emptyset$ and $X(D,\C,\pi)=\emptyset$.
    \item[(ii)] $X(D,\R\oplus \R,\pi)= \emptyset$ and $X(D,\C,\pi)\neq \emptyset$.
\end{itemize}
We only consider the first case.
The other one is similar.

Assume we are in the case (i).
Set $S_0\coloneqq S\setminus \{\inf\}$.
Since $L(1/2, \pi)\neq 0$ and Proposition \ref{prop:202001241}, there exists an element $\cR_{S_0}\in X(D_{S_0})$ such that $\xi(D,(\R\oplus\R,\cR_{S_0}),\phi,s)$ is not identically zero.
Besides, $X(D,\C,\pi)=\emptyset$ implies $\xi(D,(\C,\cR_{S_0}),\phi,s)\equiv 0$.
Let $\phi_\inf\in\pi_\inf$ be a spherical vector and put
    \[
    \Gamma(\phi_\inf,s)\coloneqq 
        \begin{pmatrix} 
        \gamma_{\R\oplus\R,\R\oplus\R}(\phi_v,s) & \gamma_{\C,\R\oplus\R}(\phi_v,s)\\ \gamma_{\R\oplus\R,\C}(\phi_v,s) & \gamma_{\C,\C}(\phi_v,s)
        \end{pmatrix}.
    \]
Then the functional equation \eqref{eq:xifunct} becomes
\begin{equation}\label{eq:funct0423}
    \begin{pmatrix} 
    \xi(D,(\R\oplus\R,\cR_{S_0}),\phi,s) \\ 
    \xi(D,(\C,\cR_{S_0}),\phi,s) 
    \end{pmatrix}
= \Gamma(\phi_\inf,s)  
    \begin{pmatrix} 
    \Xi_{\R\oplus \R}(\phi,s) \\ 
    \Xi_{\C}(\phi,s) 
    \end{pmatrix}.
\end{equation}
Here, we set
    \[
    \Xi_{\mathcal{Q}}(\phi,s)\coloneqq 
    \sum_{\substack{\cE_S\in X(D_S) \\\cE_\inf=\mathcal{Q}}} \,
    \left(\prod_{v\in S_0}\gamma_{\cE_v,\cR_v}(\phi_v,s) \right)
    \xi(D,\cE_S,\overline{\phi},2-s)
    \]
for $\mathcal{Q}=\R\oplus \R$ or $\C$.
By \cite{Sato3}*{Theorem 2}, all entries of the matrix $\Gamma(\phi_\inf,s)$ are not identically zero.
From $\xi(D,(\R\oplus\R,\cR_{S_0}),\phi,s)\not\equiv 0$ and \eqref{eq:funct0423}, at least one of $\Xi_{\R\oplus\R}(\phi,s)$ and $\Xi_{\C}(\phi,s)$ is not identically zero.
Since $\xi(D, (\C,\cR_{S_0}), \phi, s)\equiv 0$, \eqref{eq:funct0423} shows that $\Xi_{\R\oplus\R}(\phi,s)$ and $\Xi_{\C}(\phi,s)$ are both not identically zero.
Hence  $X(D,\C,\pi)$ is non-empty that contradicts the assumption (i).
\end{proof}
This observation suggests that the gamma factors know something about nonvanishing of periods at bad places. 
Thus, in order to extend our results to cover Theorem \ref{thm:FH}, we should clarify the analytic properties of the gamma factors $\gamma_{\cE_v,\cR_v}(\phi_v,s)$ in \eqref{eq:glfunct}.
This would be a future work.

Using the result of this paper, we present an explicit mean value formula for toric periods in \cite{SW}.
In order to obtain a remainder term of that mean value formula, the convex bound of $\xi(D,\cE_S,\phi,s)$ is required. 
For the convexity bound of $\xi(D,\cE_S,\phi,s)$, we need further properties of $\gamma_{\cE_v,\cR_v}(\phi_v,s)$, see \cite{SS}*{Theorem 2 (iii)}. 
In \cite{OT}, Oshita and Tsuzuki studied a similar local zeta function for another prehomogeneous vector space.
They obtained the entireness of gamma factors, which is necessary for the convex bound of global zeta functions.

\subsection{Prior studies on zeta functions}

In this subsection, we give an overview of prior research on this topic.
Our global zeta functions were originally studied by F. Sato \cite{Sato3} for spherical Maass cusp forms in the non-ad\'elic setting.

In the paper \cite{Sato}, Sato introduced prehomogeneous zeta functions with automorphic forms. 
After introducing new zeta functions, he focused efforts on proving their global and local functional equations for several specific prehomogeneous vector spaces. 
We will discuss the relationship between his result \cite{Sato3} and ours in detail later. 
The local functional equation of the zeta functions attached to spherical representations are shown in general setting under some conditions by Bopp and Rubenthaler \cite{BR}. 
In that paper, they aimed to generalize the Godement-Jacquet theory.
Recently,  the theory of the local functional equations has greatly developed by the works of Wen-Wei Li \cites{Li1,Li2,Li3,Li4}.
According to his results, the local zeta functions for arbitrary admissible representations satisfy certain local functional equations.
As mentioned above, further study of the gamma factor is desirable for non-vanishing theorems and convex bounds.

As for the global theory, a lot of open questions are raised in \cite{Li1}*{Chapter 8}. 
We mention that this paper answers those questions for a specific prehomogeneous vector space. 
Firstly, he explained in \cite{Li1}*{Corollary 8.3.7} that the nonvanishing of the zeta functions implies the automorphic representation is distinguished.
In this paper, we will show that the zeta function has a pole provided the central value of the $L$-function is nonzero, and obtain a sufficient condition for distinction.
Secondly, the meromorphic continuation and the functional equation of the zeta functions conjectured in \cite{Li1}*{Definition 8.4.1} is obtained in Theorem \ref{thm:global} and Corollary \ref{cor:funct}.
Lastly, we will obtain the essential factorization \eqref{eq:20200105e1} of our zeta functions, which is an expression as a sum of Euler products of local zeta functions proposed in \cite{Li1}*{Definition 8.2.5}.
This expression will be obtained from Saito's formula and Waldspurger's formula and plays an important role in the application to the study of the period integrals.
In addition, we explicitly compute the local zeta functions and deduce from it the explicit formula (Theorem \ref{thm:explicit}) of our global zeta functions.

We will now explain in detail the relationship between the previous studies and our explicit formula.
Let us consider the $L$-series
    \[
    \mathscr{L}^S(\pi,\eta,s)\coloneqq L^2(2s-\frac12, \pi) / L^2(2s,\eta). 
    \]
For $\Re(s)>3/4$, we can rewrite it as 
    \[
    \mathscr{L}^S(\pi,\eta,s)=\sum_\fa \frac{A(\fa)}{N(\fa)^{2s}},
    \]
where $\fa$ runs through integral ideals prime to $S$, and $N(\fa)$ denotes the norm of $\fa$. 
Note that $A(\fa)$ is in $\R$. 
Using this expression, we define its Rankin-Selberg convolution by
    \[
    (\mathscr{L}^S(\pi,\eta)\otimes \mathscr{L}^S(\pi,\eta))(s)\coloneqq \sum_\fa \frac{A(\fa)^2}{N(\fa)^{2s}}.
    \]
Putting $L(2s-\frac12,\pi_v)/L(2s,\eta_v)=\sum_{m=0}^\inf a_v(m)q_v^{-2sm}$, we see
    \[
    (\mathscr{L}^S(\pi,\eta)\otimes \mathscr{L}^S(\pi,\eta))(s)=\prod_{v\notin S} \left( \sum_{m=0}^\inf a_v(m)^2 \, q_v^{-2sm} \right) .
    \]
Therefore, by a direct computation for each $v$, we obtain
\begin{equation}\label{eq:rsformula}
(\mathscr{L}^S(\pi,\eta)\otimes \mathscr{L}^S(\pi,\eta))(s)=L^S(1,\pi,\mathrm{Ad})\times \mathcal{D}_E^S(\pi,s) .    
\end{equation}
We use the equation \eqref{eq:rsformula} to relate our zeta functions to some zeta functions of previous studies.

Let us consider the following conditions:
\begin{itemize}
\item $F=\Q$, $D=M_2$, and $S=\{\inf,2\}$.
\item $\pi=\otimes_v \pi_v$ and $\pi_v$ is spherical for any $v$.
\item $\phi=\otimes_v\phi_v\in \pi$ is an even Hecke-Maass form for $\SL_2(\Z)$ of weight zero. 
\item $T(\phi)$ is a Hecke-Maass form for $\Gamma_0(4)$ of weight $\frac12$, which is the theta lift of $\phi$, cf. \cite{KS}. 
\item $c(n)$ is the $n$-th Fourier coefficient of $T(\phi)$ $(n\in\Z)$. 
\end{itemize}
Write $d_E$ for the discriminant of a quadratic field $E\in X(D)$ over $\Q$. 
By a relation between Hecke eigenvalues and Fourier coefficients, we find
\begin{equation}\label{eq:app1}
\sum_{m\in \N, (m,2)=1} \frac{|c(d_E\, m^2)|^2 }{m^{2s}}=|c(d_E)|^2\, (\mathscr{L}^S(\pi,\eta)\otimes \mathscr{L}^S(\pi,\eta))(s). 
\end{equation}
Applying \eqref{eq:rsformula} and the Kohnen-Zagier formula \cite{BM10}*{Theorem 1.4} to \eqref{eq:app1}, there exists a meromorphic function $G(s)$ independent of $E$ such that 
\begin{equation*}\label{eq:app2}
\xi(D, \cE_S, \phi, s)= G(s)\, \sum_{E\in X(D,\cE_S)} \sum_{m\in \N, (m,2)=1}  \frac{|c(d_E\, m^2)|^2}{|d_Em^2|^s} \, \alpha_E^{\cE_S}(\phi)\prod_{v\in S}\alpha_{E_v}^\#(\phi_v, \phi_v).
\end{equation*}
Hence, this means that $\xi(D, \cE_S, \phi, s)$ is a Dirichlet series of the squares of Fourier coefficients of Maass cusp forms of half-integral weights.  
This formula has become a partial restoration of the formula \cite{Sato3}*{Theorem A}, and our explicit formula can be seen as a sort of its generalization. 
Using the Kohnen-Zagier formula and its generalization \cite{BM07} in a similar argument, our explicit formula can be regarded as a Dirichlet series of squared Fourier coefficients of holomorphic cusp forms of half-integral weights.

It is mentioned in \cite{Sato3} that \cite{Sato3}*{Theorem A} is an analogue of \cite{BS}*{Corollaries 2.2, 2.3} with respect to the Maass cusp forms. 
Let us explain the reason for this. 
Consider the following conditions:
\begin{itemize}
\item $F=\Q$, $D$ is definite, and $\cO$ is a maximal order in $D$. 
\item $\pi=\otimes_v \pi_v$ is infinite dimensional, $\pi_\inf$ is trivial, and $\pi_v$ has a $\cO_v^\times$-fixed vector for each $v<\inf$, where $\cO_v\coloneqq \cO\otimes\Z_v$. 
\item $\phi=\otimes_v\phi_v\in \pi$, and $\phi_v$ is a $\cO_v^\times$-fixed vector for each $v<\inf$. 
\item $\mathcal{W}(\phi)$ is a Waldspurger lift of $\phi$, that is, $\mathcal{W}(\phi)$ is a holomorphic cusp form of weight $\frac32$, cf. \cite{SWY}*{\S3}. 
\item $a(n)$ is the $n$-th Fourier coefficient of $\mathcal{W}(\phi)$ $(n\in\N)$. 
\end{itemize}
It follows from \cite{SWY}*{Theorem 3.5} that
    \[
    |a (|d_E|)|^2= c_\pi(E)^2 \times |d_E|\, L(1,\eta_E)^2 \times |\cP_E(\phi)|^2,
    \]
where $c_\pi(E)$ is a power of $2$ which is bounded and depends only on $E\in X(D)$. 
In addition, according to \cite{SWY}*{Lemma 5.2}, a similar equation as $\eqref{eq:app1}$ with $c(n)$ replaced with $a(n)$ holds. 
Therefore, without using the Kohnen-Zagier formula, we may regard $\xi(D, \cE_S, \phi, s)$ as a Dirichlet series of $|a (|d_E m^2|)|^2$ satisfying \eqref{eq:app2}. 
Under the identification $\mathrm{GO}(4)^+\cong (D^\times \times D^\times) /\Q^\times$ and $V\cong \M(4,2)$, the integration over $D_\A^\times \times D_\A^\times$ in the definition of $Z(\Phi,\phi,s)$ is regarded as the Yoshida lifting $Y(\phi)$ of $\phi\otimes \overline{\phi}$ (a theta correspondence). 
Hence, one can identify $Z(\Phi,\phi,s)$ with the K\"ocher-Maass series of $Y(\phi)$ by choosing a suitable test function $\Phi$. 
This is studied in \cite{BS}.

\subsection{Organization of this paper}

The following is the structure of the paper.
In the next section, we recall the structure theory of the prehomogeneous vector space we consider.
In \S~\ref{sec:globalzeta424}, we recall the global zeta functions with automorphic forms and show their basic properties such as meromorphic continuation, location of poles, and functional equation. 
In \S~\ref{sec:explicit}, we will prove the explicit formula (Theorem \ref{thm:explicit}).
Saito's method \cites{Saito1,Saito2} plays an important roll. 
Finally in \S~\ref{sec:localzeta424}, we compute our local zeta functions at unramified places, which is necessary for Theorem \ref{thm:explicit}.

\vspace{5mm}
\noindent
{\bf Acknowledgments.} 
The authors thank Wen-Wei Li for helpful discussions and sharing his impeccably written preprints with us. 
Needless to say, this paper owes its inspiration to a series of his recent pioneering works.
The authors also would like to thank Kimball Martin for pointing out that the difference between our result and Theorem \ref{thm:FH} is reduced to a local problem.
A further thank you to both Atsushi Ichino and Tamotsu Ikeda for useful comments and kindly answering our questions. 
The first author would like to thank Hang Xue for helpful comments.
The second author thanks Akihiko Yukie for helpful discussions.
M.S. was partially supported by JSPS Research Fellowship for Young Scientists No.20J00434.
S.W. was partially supported by JSPS Grant-in-Aid for Scientific Research (C) No.20K03565, (B) No.21H00972.

\section{Preliminaries}

Throughout this section, we let $F$ be a field of characteristic not two. 

\subsection{Quaternion algebra}\label{sec:quaternoin}

Let $D$ denote a quaternion algebra over $F$.
When $D$ is not division, we may identify $D$ with $\M_2(F)$, and a standard $F$-involution $\iota$ on $D$ can be chosen as $x^\iota\coloneqq \left(\begin{smallmatrix} 0 & 1 \\ -1 & 0 \end{smallmatrix}\right)\,{}^t\!x \left(\begin{smallmatrix} 0 & -1 \\ 1 & 0 \end{smallmatrix}\right)$. 
When $D$ is division, there exist a quadratic extension $E$ of $F$ and an element $b\in F^\times$ such that $D$ is regarded as
    \[
    D=\left\{  \begin{pmatrix} \xi & \eta \\ b\overline{\eta} & \overline{\xi} \end{pmatrix} \in \M_2(E) \mid  \xi,\eta\in E \right\}
    \]
where $E\ni a\mapsto\overline{a}\in E$ is the non-trivial Galois action on $E/F$, and a standard $F$-involution $\iota$ on $D$ can be chosen as $\left(
    \begin{smallmatrix} 
    \xi & \eta \\ 
    b\overline{\eta} & \overline{\xi} 
    \end{smallmatrix}
\right)^\iota=\left(
    \begin{smallmatrix} 
    \overline{\xi} & -\eta \\ 
    -b\overline{\eta} & \xi  
    \end{smallmatrix}
\right)$. 
Set $\det(x)\coloneqq  x\, x^\iota\in F$ and $\Tr(x)\coloneqq x+x^\iota\in F$ $(x\in D)$.

Let $D$ be a quaternion algebra over $F$. 
From the Skolem-Noether theorem, two quadratic \'etale $F$-subalgebras of $D$ are isomorphic if and only if they are conjugate to each other. 
Let $X(D)$ denote a set of representatives of isomorphism classes of quadratic \'etale $F$-algebras of $D$.
For each $E$ in $X(D)$, choose an element $\delta_E\in D$ so that $E=F+F\delta_E$ and $\Tr(\delta_E)=0$. 
Set $d_E\coloneqq \delta_E^2$.
Note that we have $\det(\delta_E)=-d_E$.

\subsection{The space of pairs of quaternion algebras}\label{sec:space}

An algebraic group $G$ over $F$ is defined by 
    \[
    G\coloneqq G_1\times G_2\times G_3, \quad G_1=G_2=\GL_1(D), \quad G_3=\GL_2,
    \]
and an $8$-dimensional vector space $V$ over $F$ is defined by $V(F)=D\oplus D$.
Define a $F$-rational representation $\rho:G\to \GL(V)$ by
    \[
    (x,y) \cdot \rho(g_1,g_2,g_3) \coloneqq  (g_1^{-1}xg_2,g_1^{-1}yg_2)g_3, \quad (g_1,g_2,g_3)\in G, \quad (x,y)\in V.
    \]
Then, the kernel $Z_\rho$ of $\rho$ is
    \[
    Z_\rho\coloneqq \mathrm{Ker}\rho=\{ (a,b,ab^{-1})\in G \mid a,b\in\mathbb{G}_m  \}\cong \mathbb{G}_m\times \mathbb{G}_m.
    \]

Set $H\coloneqq Z_\rho\bsl G$, and we identify $H$ with $\rho(G)$.
Denote by $x\cdot \rho(h)$ or $x\cdot h$ $(x\in V$, $h\in H)$ the faithful $F$-representation of $H$ obtained from the above representation $\rho$ of $G$.
The group of $F$-rational characters of $H$ is generated by $\omega_0$ which is defined by
    \[
    \omega_0(h)\coloneqq \det(g_1)^{-1}\det(g_2)\, \det(g_3) \qquad (h=Z_\rho(g_1,g_2,g_3)\in H).
    \]
Let $\mathcal{W}$ denote the vector space over $F$ defined by $\mathcal{W}(F)=\{ x\in D \mid  \Tr(x)=0 \}$.  
A representation $\rho_0$ of $H$ on $\mathcal{W}$ over $F$ is defined by $x\cdot \rho_0(h)\coloneqq g_1^{-1}x\, g_1$ where $h=Z_\rho(g_1,g_2,g_3)\in H$.
Define a $F$-morphism $P_0$ from $V$ to $\mathcal{W}$ by $P_0(x,y)\coloneqq x\, y^\iota-y \, x^\iota$, $(x,y)\in V$.
Then, $P_0$ satisfies $P_0((x,y)\cdot \rho(h))=\omega_0(h)\, P_0(x,y)\cdot\rho_0(h)$.


For each point $x\in V(F)$, we say that $x$ is regular if $x\cdot \rho(G(\bar{F}))$ is open dense in $V(\bar{F})$, and $x$ is singular otherwise.
Suppose that $x_0$ is a regular element in $V(F)$. 
Let $H_x$ denote the stabilizer of $x$ in $H$, and $H_x^0$ the connected component of $1$ in $H$.
Set $P(x)\coloneqq -\det(P_0(x))$ and $\omega(h)\coloneqq \omega_0(h)^2$.
Then we have $P(x\cdot\rho(h))=\omega(h)P(x)$ for all $h\in H_x(F)$.
The polynomial $P(x)$ on $V$ is called the fundamental relative invariant. 
Set $V^0\coloneqq \{ x\in V \mid P(x)\neq0 \}$, and then $V^0(F)$ means the set of regular elements in $V(F)$.


\subsection{Remarks}
Let us explain a brief history of studies on our prehomogeneous vector space $(G,\rho,V)$. 
Let $\bar{F}$ denote an algebraic closure of $F$.
Set $\underline{G}\coloneqq \GL_2(\bar{F})^3$, and $\underline{V}\coloneqq \bar{F}^2\otimes \bar{F}^2\otimes\bar{F}^2$.
A rational irreducible representation $\underline{\rho}$ is defined by $v\cdot\underline{\rho}(g)=(v_1g_1)\otimes(v_2g_2)\otimes(v_3g_3)$ where $g=(g_1,g_2,g_3)\in \underline{G}$ and $v=(v_1,v_2,v_3)\in \underline{V}$.
Then, the triple $(\underline{G},\underline{\rho},\underline{V})$ is a prehomogeneous vector space, that is, it has an open dense orbit.
This space is classified into a special case $(n,m)=(4,2)$ of the irreducible reduced prehomogeneous vector spaces (20) $(\SO_n\times \GL_m,M_{n,m})$ in \cite{SK}*{Theorem 24 in \S~3}.
Its split $F$-form $F^2\otimes F^2\otimes F^2$ was introduced in \cite{WY} as the $D_4$ case from the viewpoint of field extensions, and its $\Z$-structure $\Z^2\otimes \Z^2\otimes \Z^2$ is well-known as Bhargava's cube \cite{Bhargava}.
Its $F$-forms were classified in [Saito, Theorem 2.11 (3)]. 
The $F$-rational orbits of the non-split $F$-forms $\GL_2(E)\times \GL_2(F)$ and $\GL_2(E')$, where $E$ (resp. $E'$) is a quadratic (resp. cubic) extension of $F$, were studied in \cite{KY} and \cite{GS}.
In addition, Kable and Yukie obtained density theorems in \cite{KY1} and \cite{Yukie} using the prehomogeneous zeta functions.
Besides, Taniguchi \cites{Taniguchi1,Taniguchi2} studied another type non-split $F$-form, which is a pair of division quaternoin algebras. 
The spaces we deal with are the split $F$-form and Taniguchi's non-split $F$-form.

\subsection{Orbit decompositions}\label{sec:orbits}

In what follows, we describe $F$-rational orbits and stabilizers for $(G,\rho,V)$. 
Let $G_x$ denote the stabilizer of $x$ in $G$. 
We write $O_n$ for the zero matrix in $\M_n$, and $E_{ij}$ for the matrix unit of the entry $(i,j)$ in $\M_n$.
The following propositions can be proved by direct calculations.

\begin{prop}\label{prop:sor1} 
Suppose that $D=\M_2(F)$.
The following is a list of representative elements of singular $G(F)$-orbits in $V(F)$.
\begin{itemize}
\item $x_0\coloneqq (O_2,O_2)$, $G_{x_0}=G$.

\item $x_1\coloneqq (O_2,E_{12})$, $G_{x_1}=\{ (  \left(
    \begin{smallmatrix}
    a&*  \\ 
    0 & * 
    \end{smallmatrix}
\right)  ,\left(
    \begin{smallmatrix}
    *&*  \\ 
    0 & b 
    \end{smallmatrix}
\right)  ,\left(
    \begin{smallmatrix}
    *&*  \\ 
    0 & c 
    \end{smallmatrix}
\right)  )\in G \mid a=bc   \}$.

\item $x_2\coloneqq (O_2,I_2)$, $G_{x_2}=\{ ( g,h,\left(\begin{smallmatrix}*&*  \\ 0 & c \end{smallmatrix}\right) )\in G \mid g=c h  \}$.

\item $x_3\coloneqq (E_{11},E_{12})$, $G_{x_3}\cong G_{x_2}$.

\item $x_4\coloneqq (E_{12},E_{22})$, $G_{x_4}\cong G_{x_2}$.

\item $x_5\coloneqq (E_{12},I_2)$, \\
$G_{x_5}=\left\{ \left( \left(
    \begin{smallmatrix}
    a_1&a_2 \\ 
    0&a_3 
    \end{smallmatrix}
\right),\left(
    \begin{smallmatrix}
    a_1&a_2 \\ 
    0&a_3 
    \end{smallmatrix}
\right)\left(
    \begin{smallmatrix}
    1/c_3 & -c_2/c_1 c_3 \\ 
    0 & 1/c_3 
    \end{smallmatrix}
\right),\left(
    \begin{smallmatrix} 
    c_1 & c_2 \\ 
    0 & c_3 
    \end{smallmatrix}
\right) \right)\in G \, \middle| \,  a_1c_3=a_3c_1 \right\}$.
\end{itemize}
\end{prop}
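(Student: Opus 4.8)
The statement to prove is Proposition~\ref{prop:sor1}, which lists representatives of the singular $G(F)$-orbits in $V(F)$ when $D = \mathrm{M}_2(F)$, together with their stabilizers. The plan is a direct, structural analysis of the $G(F)$-action on $V(F) = \mathrm{M}_2(F) \oplus \mathrm{M}_2(F)$, organized by the rank stratification induced by the singular locus, i.e. by the vanishing of $P(x) = -\det(P_0(x))$ where $P_0(x,y) = xy^\iota - yx^\iota$.

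\textbf{Step 1: Reduce to pencils of $2\times 2$ matrices.} An element $(x,y) \in V(F)$ spans a linear pencil $\{sx + ty : (s,t) \in F^2\}$ inside $\mathrm{M}_2(F)$. The group $G_3 = \GL_2$ acts on the right on the pair $(x,y)$ precisely by reparametrizing this pencil (a $\GL_2$-change of the coordinates $(s,t)$), while $G_1 \times G_2 = \GL_1(D)^2 = \GL_2(F)^2$ acts by left and right multiplication on both matrices simultaneously. So a $G(F)$-orbit is determined by the $\GL_2(F) \times \GL_2(F)$-equivalence class of the pencil $\langle x, y\rangle$ up to linear change of basis of the pencil. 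This is exactly the classical theory of classification of pencils of matrices (Kronecker/Weierstrass theory over a field), but here in the very small $2\times 2$, two-generator case, so the list of possibilities is short and can be enumerated by hand.

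\textbf{Step 2: Enumerate the singular pencils and pick representatives.} The invariant $P(x,y)$ is, up to sign and the map $P_0$, the discriminant of the binary form $\det(sx+ty)$ in $(s,t)$; the singular locus $V \setminus V^0$ is where this binary quadratic form in $(s,t)$ has a repeated root or is identically zero (one also has to incorporate the exceptional cases where $P_0(x,y)$ itself is rank $\le 1$). Running through the possibilities for the pencil: (a) $x = y = 0$; (b) the pencil is $1$-dimensional and spanned by a rank-one matrix; (c) $1$-dimensional spanned by a rank-two matrix; (d) $2$-dimensional but every member has determinant zero, with the pencil containing a rank-one element (giving $x_3$, $x_4$ after normalizing by $\GL_2 \times \GL_2$ and the pencil-basis change); (e) $2$-dimensional, $\det(sx+ty)$ a nonzero perfect square in $(s,t)$ (the "$x_5$" type: after normalization the pencil is spanned by $I_2$ and a nonzero nilpotent). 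Using the transitivity of $\GL_2(F)$ on nonzero vectors, on rank-one matrices, on bases of $F^2$, and the Jordan form for a single matrix relative to $I_2$, one checks each equivalence class contains exactly one of $x_0,\dots,x_5$, and that these six lie in distinct orbits (e.g. by looking at the rank of $x$, of $y$, of generic members of the pencil, and whether $P_0 \equiv 0$).

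\textbf{Step 3: Compute the stabilizers.} For each representative $x_i$, solving $(g_1,g_2,g_3)$ with $x_i \cdot \rho(g_1,g_2,g_3) = x_i$ is a finite linear computation: writing $g_1^{-1}(\cdot)g_2$ applied to each of the two matrix-slots and then post-multiplying the pair by $g_3$, one gets explicit polynomial equations in the entries whose solution set is precisely the subgroup displayed. For instance, for $x_1 = (O_2, E_{12})$ the first slot imposes nothing, and $g_1^{-1}E_{12}g_2$ scaled by the $(2,2)$-block data of $g_3$ being $E_{12}$ forces $g_1, g_2, g_3$ to be upper triangular with the multiplicative relation among the relevant diagonal entries. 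The isomorphisms $G_{x_3} \cong G_{x_4} \cong G_{x_2}$ come from the fact that $x_3, x_4$ are obtained from $x_2$ by an element of $G(\bar F)$ (or by a pencil-automorphism) normalizing the situation, so conjugation identifies the stabilizers; I would just exhibit the conjugating element. The only mildly involved one is $G_{x_5}$, where the commutation of $g_1$ with (essentially) the nilpotent $E_{12}$ forces the parabolic form and the stated relation $a_1 c_3 = a_3 c_1$; this is again just linear algebra.

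\textbf{Main obstacle.} There is no deep difficulty — the assertion is explicitly flagged as provable "by direct calculations." The real work is (i) being sure the enumeration in Step~2 is \emph{exhaustive} over the base field $F$ (as opposed to $\bar F$): one must be careful that no new $F$-rational singular orbit is hidden in a pencil whose classification could a priori depend on arithmetic of $F$, and here the key point is that in this $2$-generator $2\times 2$ case the classification of singular pencils is in fact "split" over any field of characteristic $\ne 2$ (unlike the regular locus, where the quadratic form $\det(sx+ty)$ has genuine arithmetic content and produces the quadratic étale algebras $E \in X(D)$); and (ii) the bookkeeping in matching each normalized pencil to exactly one $x_i$ and verifying the six are pairwise non-conjugate. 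Both are routine but need care to state cleanly, so in the write-up I would organize Step~2 as a short lemma on pencils and then dispatch the stabilizer computations one representative at a time.
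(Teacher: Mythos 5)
Your approach is sound, and since the paper offers no proof of this proposition at all (it merely records that Propositions~\ref{prop:sor1} and~\ref{prop:sor2} ``can be proved by direct calculations''), organizing those calculations via pencils of $2\times 2$ matrices is a perfectly reasonable way to carry them out. Two remarks. First, the $F$-rationality issue you flag as the ``main obstacle'' is already discharged by your own Step~2: every normalization you invoke there---transitivity of $\GL_2(F)$ on nonzero vectors, on rank-one matrices, on ordered bases of $F^2$, and conjugacy over $F$ of any nonzero $2\times 2$ nilpotent to $E_{12}$ (using that the repeated root of $\det(sx+tI_2)$ automatically lies in $F$ when the discriminant vanishes and $\operatorname{char}F\ne 2$)---is valid over $F$ itself, so no separate descent argument is needed; equivalently, each of the six stabilizers listed is connected with trivial $H^1(F,\cdot)$, so each geometric singular orbit meeting $V(F)$ is a single $G(F)$-orbit.

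Second, the claim in Step~3 that ``$x_3, x_4$ are obtained from $x_2$ by an element of $G(\bar F)$'' is false, and would need to be repaired before exhibiting a conjugating element: by your own Step~2 analysis, $x_2=(O_2,I_2)$ spans a one-dimensional pencil of rank-two matrices, whereas $x_3=(E_{11},E_{12})$ and $x_4=(E_{12},E_{22})$ span two-dimensional pencils consisting entirely of rank-$\le 1$ matrices, so the three lie in pairwise distinct $G(\bar F)$-orbits and no such element exists. Your parenthetical fallback (``or by a pencil-automorphism'') is the correct mechanism: under the identification $V\cong F^2\otimes F^2\otimes F^2$, the $S_3$ permutation of tensor slots gives outer automorphisms of the whole setup (the linear maps $\sF_3,\sF_4$ that the paper introduces later, in the proof of Lemma~\ref{lem:contx34}) which permute the orbits of $x_2,x_3,x_4$ and thus transport the stabilizers to one another; alternatively, one simply computes $G_{x_3}$ and $G_{x_4}$ directly exactly as one computes $G_{x_2}$.
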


\begin{prop}\label{prop:sor2}
Suppose that $D$ is division. There are only two singular $G(F)$-orbits in $V(F)$.
One is the trivial orbit $x_0\coloneqq (0,0)$, and the other is $x_2\cdot \rho(G(F))$ where $x_2\coloneqq (0,1)$.
\end{prop}


For each $E\in X(D)$, define a similitude orthogonal group $\mathrm{GO}_{2,E}$ over $F$ by
    \[
    \mathrm{GO}_{2,E}\coloneqq \left\{  g\in \GL_2 \, \middle| \, \exists \mu(g)  \in \mathbb{G}_m \;\; \text{s.t.} \;\; {}^t\!g
        \begin{pmatrix} 
        1&0 \\ 
        0&-d_E 
        \end{pmatrix} 
    g=  \mu(g) 
        \begin{pmatrix} 
        1&0 \\ 
        0&-d_E 
        \end{pmatrix}  
    \right\}.
    \]
For each algebraic group $\mathscr{U}$ over $F$, let $\mathscr{U}^0$ denote the connected component of $1$ in $\mathscr{U}$. 
Then, $\mathrm{GO}_{2,E}^0$ is isomorphic to $R_{E/F}(\mathbb{G}_m)$ over $F$.

\begin{prop}\label{prop:orbits}
A set of representative elements of regular $G(F)$-orbits in $V(F)$ is given by $\{ \, x_E \mid E \in X(D) \, \}$, where we set 
    \[
    x_E\coloneqq (1,\delta_E)\in V(F).
    \] 
Notice that $P(x_E)=4d_E\in d_E(F^\times)^2$.
In addition,
    \begin{multline*}
    G_{x_E}^0= \Big\{   (  a_1+ b_1 \, \delta_E , \, a_2+b_2 \, \delta_E, \,  g_3)\in G \, \Big| \, \\
    a_j^2- b_j^2d \neq 0  \; \; (j=1,2), \; \; g_3=\left(
        \begin{smallmatrix} 
        a_1&b_1d \\ 
        b_1&a_1 
        \end{smallmatrix}
    \right)\left(
        \begin{smallmatrix} 
        a_2&b_2d \\ 
        b_2&a_2 
        \end{smallmatrix}
    \right)^{-1}   \Big\},    
    \end{multline*}
    \[
    [G_{x_E}:G_{x_E}^0]=2, \quad G_{x_E}^0\cong \mathrm{GO}_{2,E}^0 \times \mathrm{GO}_{2,E}^0.
    \]
It follows from the Skolem-Noether theorem that there exists an element $\gamma \in D^\times$ such that $(\gamma, \gamma, \diag(1,-1))\in G_{x_E}(F) \setminus G_{x_E}^0(F)$ and $\gamma^{-1} \delta_E \gamma = -\delta_E$.
\end{prop}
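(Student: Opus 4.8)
The plan is to obtain the proposition from two explicit computations inside $D$, in the spirit of Propositions~\ref{prop:sor1} and~\ref{prop:sor2}: one identifying the regular $G(F)$-orbits, and one determining the stabilizer $G_{x(\delta)}$.

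For the orbits I would first evaluate the relative invariant on the points $x(\alpha)=(1,\alpha)$. Using $\alpha^\iota=\Tr(\alpha)-\alpha$ together with $\alpha^2=\Tr(\alpha)\alpha-\det(\alpha)$ one finds $P_0(x(\alpha))=\alpha^\iota-\alpha=\Tr(\alpha)-2\alpha$, hence $P(x(\alpha))=-\det(\Tr(\alpha)-2\alpha)=\Tr(\alpha)^2-4\det(\alpha)$; in particular $P(x(\delta))=4d\in d(F^\times)^2$, and $x(\alpha)$ is regular exactly when $\alpha\notin F$, in which case $E_\alpha:=F[\alpha]$ is a quadratic \'etale $F$-subalgebra of $D$. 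To see that every regular orbit contains some $x(\delta_E)$, take $x=(x_1,x_2)\in V^0(F)$: since $P(x)\neq0$, some $F$-linear combination $sx_1+tx_2$ must be a unit of $D$ (automatic when $D$ is division; when $D=M_2(F)$, otherwise the pencil $\{sx_1+tx_2\}$ consists of matrices with a common kernel line or a common image line, which makes $P_0(x)$, hence $P(x)$, vanish). Acting by a suitable element of $G_3(F)=\GL_2(F)$ and then by $(x_1,1,I_2)\in G(F)$ brings $x$ to the form $x(\alpha)$ with $\alpha=x_1^{-1}x_2\notin F$; writing $\alpha=u+v\delta_{E_\alpha}$ inside $E_\alpha$ with $v\neq0$ and acting by $\bigl(\begin{smallmatrix}1&-u/v\\0&1/v\end{smallmatrix}\bigr)\in\GL_2(F)$ carries $x(\alpha)$ to $x(\delta_{E_\alpha})$. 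Finally, from $P_0((x,y)\cdot\rho(h))=\omega_0(h)\,P_0(x,y)\cdot\rho_0(h)$ and the fact that $\rho_0$ acts by conjugation in $D$, which preserves $\det$ on $\mathcal{W}$, one gets $P(x\cdot\rho(h))=\omega_0(h)^2P(x)$; thus the class of $P(x)$ in $F^\times/(F^\times)^2$ is a $G(F)$-orbit invariant, and since $P(x(\delta_E))=4d_E$ while the class of $d_E$ determines $E$, the points $x(\delta_E)$ lie in pairwise distinct orbits.

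For the stabilizer I would begin by writing $g_3=\bigl(\begin{smallmatrix}a&b\\c&e\end{smallmatrix}\bigr)$. Because $F$ is central in $D$, the equation $(g_1^{-1}g_2,\,g_1^{-1}\delta g_2)\,g_3=(1,\delta)$ is equivalent to $a+c\delta=g_1g_2^{-1}$ and $b+e\delta=g_1\delta g_2^{-1}$. The first equation forces $w:=g_1g_2^{-1}\in F+F\delta=E$, and then $g_1\delta g_1^{-1}=(g_1\delta g_2^{-1})\,w^{-1}\in E$; an element of $E=F\oplus F\delta$ of reduced trace $0$ is a scalar multiple of $\delta$, and having reduced norm $-d$ then forces that scalar to be $\pm1$ (here $\mathrm{char}\,F\neq2$ enters), so $g_1\delta g_1^{-1}=\varepsilon\delta$ with $\varepsilon\in\{\pm1\}$. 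If $\varepsilon=1$, then $g_1\in C_D(\delta)^\times=E^\times$, hence $g_2=w^{-1}g_1\in E^\times$; writing $g_j=a_j+b_j\delta$ (so $a_j^2-b_j^2d=\Nm_{E/F}(g_j)\neq0$) and solving the two equations in the $F$-basis $(1,\delta)$ yields exactly $g_3=\bigl(\begin{smallmatrix}a_1&b_1d\\b_1&a_1\end{smallmatrix}\bigr)\bigl(\begin{smallmatrix}a_2&b_2d\\b_2&a_2\end{smallmatrix}\bigr)^{-1}$, which is exactly the $G_{x(\delta)}^0$ of the statement, and conversely every such triple stabilizes $x(\delta)$. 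Here $M\colon a+b\delta\mapsto\bigl(\begin{smallmatrix}a&bd\\b&a\end{smallmatrix}\bigr)$ is the regular representation of $E$ on $(1,\delta)$; verifying ${}^tM(g)\,\diag(1,-d)\,M(g)=\Nm_{E/F}(g)\,\diag(1,-d)$ and comparing dimensions shows $M(E^\times)=\mathrm{GO}_{2,E}^0\cong\Res_{E/F}\mathbb{G}_m$, so $G_{x(\delta)}^0\cong\mathrm{GO}_{2,E}^0\times\mathrm{GO}_{2,E}^0$ is a connected torus of dimension $4=\dim G-\dim V$ (the regular orbit being open in $V$), hence the identity component of $G_{x(\delta)}$.

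Finally, for $\varepsilon=-1$: by the Skolem--Noether theorem, $\delta$ and $-\delta$ are conjugate in $D$, giving $\gamma\in D^\times$ with $\gamma^{-1}\delta\gamma=-\delta$; a direct check shows $(\gamma,\gamma,\diag(1,-1))$ satisfies the stabilizer equations and lies outside $G_{x(\delta)}^0$ since $\diag(1,-1)\notin M(E)$. Any stabilizing triple with $g_1\delta g_1^{-1}=-\delta$ has $\gamma^{-1}g_1\in C_D(\delta)^\times=E^\times$, so left multiplication by $(\gamma,\gamma,\diag(1,-1))^{-1}$ sends it to the $\varepsilon=1$ case, hence into $G_{x(\delta)}^0$; therefore $G_{x(\delta)}=G_{x(\delta)}^0\sqcup(\gamma,\gamma,\diag(1,-1))\,G_{x(\delta)}^0$, and running the same argument over $\bar F$ (where $E\otimes_F\bar F\cong\bar F\times\bar F$) gives $[G_{x(\delta)}:G_{x(\delta)}^0]=2$ as algebraic groups. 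All steps are elementary; the points that will require care are the pencil argument reducing an arbitrary regular point to the shape $x(\alpha)$, and, in the stabilizer computation, the reduced-norm constraint that pins $g_1\delta g_1^{-1}$ exactly to $\pm\delta$ — this is precisely what keeps the component group equal to $\Z/2\Z$ rather than something larger.
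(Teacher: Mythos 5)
The paper offers no proof beyond ``direct calculations,'' and your argument is exactly that kind of explicit verification, so the approaches agree in spirit; your reduction of a regular point to $x(\delta_E)$, the $P(x)\bmod (F^\times)^2$ invariant, and the stabilizer computation (including the use of $C_D(\delta)=E$ and the index-$2$ coset via Skolem--Noether) are all sound. One small inaccuracy to flag: the parenthetical claim that ``$x(\alpha)$ is regular exactly when $\alpha\notin F$'' fails when $D=M_2(F)$, since a non-scalar $\alpha$ with a repeated eigenvalue (e.g.\ $\alpha$ nilpotent nonzero) still gives $P(x(\alpha))=\Tr(\alpha)^2-4\det(\alpha)=0$; the correct statement is that $x(\alpha)$ is regular iff $\alpha$ has distinct eigenvalues over $\bar F$, equivalently iff $F[\alpha]$ is a quadratic \'etale subalgebra. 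This slip does not affect the rest of your argument, because the element $\alpha$ you produce from an arbitrary regular point automatically satisfies $P(x(\alpha))\neq 0$ and hence generates an \'etale algebra, which is the only property you actually use.
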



\section{Global zeta functions with automorphic forms}\label{sec:globalzeta424}

To simplify the notation, the measure $\d h$ on $H(\A)$ in this section \S\ref{sec:globalzeta424} is normalized differently from \S\ref{sec:1} and \S\ref{sec:explicit}.

\subsection{Number fields}\label{sec:notation}

Let $F$ be an algebraic number field, and let $\Sigma$ denote the set of all the places of $F$.
For any $v\in\Sigma$, we denote by $F_v$ the completion of $F$ at $v$.
If $v<\inf$, we write $\fo_v$ for the ring of integers of $F_v$, and $\varpi_v$ for a prime element of $\fo_v$. We put $q_v=\#(\fo_v/\varpi_v\fo_v)$.
Let $\A$ denote the adele ring of $F$. 
Let $\d x$ denote the Haar measure on $\A$ normalized by $\int_{\A/F} \d x=1$. 
For each $v<\inf$, we fix a Haar measure $\d x_v$ on $F_v$ normalized by $\int_{\fo_v}\d x_v=1$. 
Further, we choose Haar measures $\d x_v$ on $F_v$ for $v\mid\inf$ so that $\d x = |\Delta_F|^{-1/2} \prod_{v\in\Sigma} \d x_v$ holds, where $|\Delta_F|$ denotes the absolute discriminant of $F/\Q$.


We denote by $|\; |_v$ the normalized valuation of $F_v$.
Then, we have $\d (ax_v)=|a|_v \d x_v$ for any $a\in F_v^\times$.
Define the idele norm $|\; |=|\; |_\A$ on $\A^\times$ by $|x|=|x|_\A=\prod_{v\in\Sigma} |x_v|_v$ for all $x=(x_v)\in\A^\times$.
Choose a non-trivial additive character $\psi_\Q$ on $\A_\Q/\Q$, and set $\psi_F=\psi_\Q \circ \Tr_{F/\Q}$. Then, $\d x$ is the self-dual Haar measure with respect to $\psi_F$. For each $v\in \Sigma$, we set $\psi_{F_v}=\psi_{F}|_{F_v}$. 
Set
    \[
    c_v\coloneqq\zeta_{F_v}(1)=(1-q_v^{-1})^{-1}  \;\; \text{if $v<\inf$}, \qquad c_v\coloneqq 1 \;\; \text{if $v\mid\inf$}.   
    \]
For each $v\in\Sigma$, let $\d^\times x_v$ denote a Haar measure on $F_v^\times$ as ${\displaystyle \d^\times x_v= c_v \, \tfrac{\d x_v}{|x|_v} }$. 
The idele norm $|\;|$ induces an isomorphism $\A^\times/\A^1\to\R_{>0}$.
Choose the Haar measure $\d^\times x=|\Delta_F|^{-1/2}\prod_{v\in\Sigma}\d^\times x_v$
on $\A^\times$ and normalize the Haar measure $\d^1 x$ on $\A^1$ in such a way
that the quotient measure on $\R_{>0}$ is $\d t/t$, where $\d t$ is the
Lebesgue measure on~$\R$.
We set
\begin{equation}\label{eq:c_F}
c_F\coloneqq \int_{F^\times\bsl \A^1} \d^1 x.  
\end{equation}

For a finite subset $S$ of $\Sigma$, we set $F_S=\prod_{v\in S}F_v$.
Define the norm $|\; |_S$ on $F_S^\times$ by $|x|_S=\prod_{v\in S}|x_v|_v$ for $x=(x_v)\in F_S^\times$.
Let $\mathcal{V}$ be a finite dimensional vector space over $F$. 
Denote by $\cS(\mathcal{V}(F_S))$ (resp. $\cS(\mathcal{V}(\A))$) the Schwartz space of $\mathcal{V}(F_S)$ (resp. $\mathcal{V}(\A)$). 

\subsection{Global zeta functions and their principal parts}\label{sec:3.2}
Let $D$ be a quaternion algebra over $F$, set $D_\A\coloneqq D\otimes \A$. 
Then we have $V(\A)=D_\A\oplus D_\A$.
Let $Z$ denote the center of $\GL_1(D)$, and set $PD^\times=Z(F)\bsl D^\times$, $PD^\times_\A\coloneqq Z(\A)\bsl D_\A^\times$.
Choose an automorphic form $\phi$ in $L^2(PD^\times\bsl PD_\A^\times)$. 
Here, we refer to \cite{GJ}*{p.145} for the sense of automorphic forms. 
Suppose that $\phi$ is cuspidal.

A bilinear form $\langle \; , \; \rangle$ on $V$ is defined by
    \[
    \langle (x_1,x_2) , (y_1,y_2) \rangle\coloneqq \Tr(x_1 y_1)+\Tr(x_2  y_2),
    \]
and the dual space of $V$ is identified with $V$ by $\langle \; , \; \rangle$. 
Let $\rho^\vee$ denote the contragredient representation of $\rho$ on $V$ with respect to $\langle \;\; , \;\; \rangle$, that is, $\langle x\cdot \rho(g) , y\cdot \rho^\vee(g) \rangle=\langle x , y \rangle$. 
Then, one has $Z_\rho=\mathrm{Ker}\rho^\vee$ and
    \[
    (y_1,y_2)\cdot \rho^\vee(g_1,g_2,g_3) =(g_2^{-1} y_1 g_1,g_2^{-1}y_2g_1)\, ^t{}\!g_3^{-1}.
    \]
For $s\in\C$ and $\Phi\in\cS(V(\A))$, we define the global zeta functions $Z(\Phi,\phi,s)$ and $Z^\vee(\Phi,\phi,s)$ as
    \[
    Z(\Phi,\phi,s)\coloneqq \int_{H(F)\bsl H(\A)} |\omega(h)|^s \phi(g_1)\, \overline{\phi(g_2)}\sum_{x\in V^0(F)} \Phi(x\cdot \rho(h))\, \d h,
    \]
    \[
    Z^\vee(\Phi,\phi,s)\coloneqq \int_{H(F)\bsl H(\A)} |\omega^\vee(h)|^s \phi(g_1)\, \overline{\phi(g_2)}\sum_{x\in V^0(F)} \Phi(x\cdot \rho^\vee(h))\, \d h,
    \]
where $h=Z_\rho(g_1,g_2,g_3)\in H(\A)$, $\omega^\vee\coloneqq \omega^{-1}$ and $\d h$ is a Haar measure on $H(\A)$.

\begin{lem}\label{lem:abs}
There exists a sufficiently large constant $T>0$ such that $Z(\Phi,\phi,s)$ and $Z^\vee(\Phi,\phi,s)$ are absolutely and uniformly convergent on any compact set
 in $\{ s\in \C \mid \Re(s)>T \}$. Hence, they are holomorphic on the domain $\{ s\in \C \mid \Re(s)>T \}$.
\end{lem}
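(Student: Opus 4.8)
The plan is to establish the absolute convergence of $Z(\Phi,\phi_1,\phi_2,s)$ (and similarly $Z^\vee$) on a right half-plane by the standard ``geometric-side'' majorization that goes back to the work of Shintani, Saito and Yukie on prehomogeneous zeta functions, adapted to account for the extra cuspidal factors $\phi_1(g_1)\phi_2(g_2)$. First I would reduce the problem to bounding, for large $\Re(s)$, the integral
\[
\int_{H(F)\bsl H(\A)} |\omega(h)|^{\Re(s)}\,|\phi_1(g_1)|\,|\phi_2(g_2)|\sum_{x\in V^0(F)}|\Phi|(x\cdot\rho(h))\,\d h,
\]
so that henceforth $\Phi\geq 0$ and $\phi_j\geq 0$ may be assumed. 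Next I would choose a Siegel domain $\mathfrak{S}$ for $H(F)\bsl H(\A)$: writing $H$ in terms of $G_1\times G_2\times G_3=\GL_1(D)\times\GL_1(D)\times\GL_2$ modulo $Z_\rho$, one gets a fundamental domain of product form, and on it one can coordinatize by a torus part (the split part of a maximal torus of $\GL_2$ together with the norm-one-modulo-center reductions coming from $D^\times$), a compact part, and unipotent parts from $\GL_2$. The point of the cuspidal hypothesis is exactly that $\phi_1,\phi_2$ are rapidly decreasing on $Z(\A)D^\times\bsl D_\A^\times$; in particular $|\phi_j(g_j)|$ is bounded by a constant on the relevant Siegel set (since $D^\times$ is anisotropic-mod-center when $D$ is division, and for $D=M_2$ the cuspidality kills the unipotent direction and gives rapid decay in the split torus direction). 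This is where I expect to invoke \cite{GJ} and the basic theory of cusp forms on $\GL_1(D)$.

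The heart of the estimate is the lattice-point sum $\sum_{x\in V^0(F)}\Phi(x\cdot\rho(h))$. Here I would follow Saito's method: cover $V$ by the relevant coordinate subspaces, use that for a Schwartz function $\Phi$ on $V(\A)$ and $h$ ranging over a Siegel set the sum over $V^0(F)\subset V(F)$ can be bounded, after applying Poisson summation or a direct comparison with an integral, by a constant times a product of the torus eigenvalues raised to powers controlled by the weights of $\rho$ on $V$, times the values $|\omega(h)|^{-N}$ for the ``dominant'' chamber and a convergent tail elsewhere. Concretely, splitting the $\GL_2$-torus parameter $t=\diag(t_1,t_2)$ and the reduced norms $n_1=\Nm(g_1)$, $n_2=\Nm(g_2)$, one gets $|\omega(h)|=|n_1|^{-1}|n_2|\,|t_1t_2|$ (up to the squaring in the definition of $\omega$ versus $\omega_0$), and the lattice sum contributes factors that, against $|\omega(h)|^s$ and against $\d h$, produce an integral of the shape $\int_0^\infty\!\!\int_0^\infty u^{as-b}v^{cs-d}\,(\text{rapidly decreasing})\,\tfrac{\d u}{u}\tfrac{\d v}{v}$ plus lower-order pieces; such an integral converges once $\Re(s)$ exceeds the finitely many thresholds $b/a$, $d/c$, etc. Taking $T$ larger than all of them gives the claim, and uniform convergence on compacta in $\{\Re(s)>T\}$ follows because all the majorants are monotone in $\Re(s)$; holomorphy is then immediate from Morera's theorem applied to the uniformly convergent integral.

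The main obstacle, I expect, is handling the singular-orbit contributions carefully enough that one genuinely gets convergence of the sum over $V^0(F)$ and not just over the generic orbit: one must check that the pieces of $V(F)$ lying in the singular locus (classified in Propositions \ref{prop:sor1} and \ref{prop:sor2}) contribute sums that are dominated by the Schwartz decay along the remaining directions, and that no stabilizer is so large that its volume diverges against the measure. For $D$ division this is easy because there are only two singular orbits and the stabilizers are reductive up to a unipotent radical of small size; for $D=M_2(F)$ one has the six singular orbits of Proposition \ref{prop:sor1} and must use the cuspidality of $\phi_1,\phi_2$ to suppress the ``boundary'' directions (parabolic directions in $\GL_2$ and in the two copies of $\GL_1(D)=\GL_2$). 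A secondary technical point is keeping the two characters $\omega$ and $\omega^\vee=\omega^{-1}$ straight: the argument for $Z^\vee$ is identical after replacing $\rho$ by $\rho^\vee$ and $\omega$ by $\omega^{-1}$, and since the Siegel-domain geometry is symmetric under $g_1\leftrightarrow g_2$ combined with $g_3\mapsto\,^t g_3^{-1}$, the same $T$ works for both.
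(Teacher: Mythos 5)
Your proposal is substantially more detailed than the paper's treatment: the paper simply cites Saito's general convergence theorem for prehomogeneous zeta functions \cite{Saito2} and leaves all the work to that reference. What you have sketched is essentially a reconstruction of the Shintani--Saito method specialized to $(G,\rho,V)$: pass to absolute values, coordinatize a Siegel domain, estimate the lattice-point sum against the torus parameters, and observe that for $\Re(s)$ large the finitely many bad exponents are all dominated. That plan is on the right track, and your identification of the role of cuspidality (boundedness of $\phi_j$ on $Z(\A)D^\times\bsl D_\A^\times$, trivial in the division case since that quotient is compact, and a genuine restriction when $D=M_2$) is the correct reduction to Saito's unadorned setting.

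One point is phrased in a misleading way. You speak of ``handling the singular-orbit contributions... one must check that the pieces of $V(F)$ lying in the singular locus... contribute sums that are dominated by the Schwartz decay.'' But the sum in the definition of $Z$ is over $V^0(F)$, which already excludes every singular point; those points literally do not contribute. What you actually need to control is the growth of the \emph{regular}-point sum $\sum_{x\in V^0(F)}|\Phi|(x\cdot\rho(h))$ as $h$ drifts through the noncompact directions of the Siegel domain — which, to be sure, is related to how $V^0$ degenerates towards the singular locus, and the stabilizers of singular orbits do heuristically signal which torus directions are dangerous. The orbit classifications in Propositions \ref{prop:sor1}--\ref{prop:sor2} become genuinely necessary only later, in the Poisson-summation argument for Theorem \ref{thm:global}; for the bare convergence claim the relevant input is Saito's uniform estimate on the regular lattice sum, not an orbit-by-orbit analysis. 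With that caveat, the approach is sound and matches what the cited reference accomplishes.
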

\begin{proof}
The absolute convergence is proved by Lemma \ref{lem:absconv}. 
As for the uniform convergence, one can deduce it from the argument in the proof of Lemma \ref{lem:absconv}. 
\end{proof}

We set
    \[
    Z_+(\Phi,\phi,s)\coloneqq \int_{H(F)\bsl H(\A) , |\omega(h)|\geq 1} |\omega(h)|^s \phi(g_1)\, \overline{\phi(g_2)}\sum_{x\in V^0(F)} \Phi(x\cdot \rho(h))\, \d h ,
    \]
    \[
    Z_+^\vee(\Phi,\phi,s)\coloneqq \int_{H(F)\bsl H(\A),|\omega^\vee(h)|\geq 1} |\omega^\vee(h)|^s \phi(g_1)\, \overline{\phi(g_2)}\sum_{x\in V^0(F)} \Phi(x\cdot \rho^\vee(h))\, \d h .
    \]
The zeta integrals $Z_+(\Phi,\phi,s)$ and $Z_+^\vee(\Phi,\phi,s)$ are entire on $\C$ by Lemma \ref{lem:abs}.


Choose a Haar measure $\d g$ on $D_\A^\times$. 
Fix an infinite place $v_{\infty,1}\in\Sigma$ and consider an embedding $\R_{>0}\ni a \mapsto a^{1/[F_{v_{\infty,1}}:\R]} \in F_{v_{\infty,1}}^\times$. 
We identify $\R_{>0}$ with the image of $\R_{>0}$ in $F_{v_{\infty,1}}^\times$ and then we have $a=|a|$ for arbitrary $a\in\R_{>0}$. 
By this identification, the Haar measure $\d^\times x$ on $\A^\times$ is expressed as
    \[
    \d^\times x=\d^1 y \, \frac{\d t}{t} \quad \text{for}  \;\; x=y\, t \;\; \text{$(y\in\A^1$, $t\in\R_{>0})$}. 
    \]
Define a Haar measure $\d z$ on $Z(\A)$ by 
    \[
    \d z\coloneqq t^{-1}\d t \, \d^1 a \quad \text{for}  \quad z=t^{1/2}a \in Z(\A), \;\; a\in\A^1 \;\; \text{and} \;\; t\in\R_{>0}. 
    \]
Then, we obtain the quotient measure $\d g/\d z$ on $PD_\A^\times$. 
Set
    \[
    f_{\phi}(g')\coloneqq \int_{PD^\times \bsl PD_\A^\times}\phi(g)\, \overline{\phi(gg')} \, \frac{\d g}{\d z} \qquad g'\in PD_\A^\times .
    \]
This function $f_{\phi}$ is called a matrix coefficient.
A Godement-Jacquet zeta integral $Z^\GJ(\Psi,f_{\phi},s)$ is defined by
    \[
    Z^\GJ(\Psi,f_{\phi},s)\coloneqq  \int_{D_\A^\times} \Psi(g) \, f_{\phi}(g) \, |\det(g)|^s \, \d g \qquad (\Psi\in\cS(D_\A))
    \]
which is absolutely convergent for $\Re(s)>2$, \cite{GJ}*{Theorem 13.8 in p.179}.
In addition, it is analytically continued to the whole $s$-plane.
Let $\d x$ denote the Haar measure on $D_\A$ normalized by $\int_{D\bsl D_\A}\d x=1$. 
Set $\widehat\Psi(y)\coloneqq \int_{D_\A} \Psi(x) \psi_F(\Tr(xy)) \, \d x$ and then we obtain the functional equation
    \[
    Z^\GJ(\Psi,f_{\phi},s)=Z^\GJ(\widehat{\Psi},f_{\phi}^\vee,2-s)
    \]
where $f^\vee_{\phi}\coloneqq\overline{f_{\phi}}$.

We choose a self-dual Haar measure $\d x$ on $V(\A)$ with respect to $\psi_F(\langle \; ,  \; \rangle)$, that is, we have $\int_{V(F)\bsl V(\A)} \d x=1$.
Set
    \[
    \widehat\Phi(y)\coloneqq \int_{V(\A)} \Phi(x) \psi_F(\langle x, y \rangle) \, \d x .
    \]
Let $K$ denote the standard maximal compact subgroup in $\GL_2(\A)$, see \S\ref{sec:measure1}.
Choose a Haar measure $\d g_3$ on $G_3(\A)=\GL_2(\A)$ as
    \begin{equation}\label{eq:normalization dg_3}
    \d g_3\coloneqq \frac{\d t_1}{t_1} \frac{\d t_2}{t_2} \, \d^1a \, \d^1c \, \d b \, \d k     
    \end{equation}
for $\diag\left(t_1^{1/2} t_2^{-1/2}, t_1^{1/2}t_2^{1/2}\right) \, \left(\begin{smallmatrix}a&b \\ 0&c\end{smallmatrix}\right) k\in \GL_2(\A)$, $t_1,t_2\in\R_{>0}$, $a,c\in\A^1$, $b\in\A$, $k\in K$.
Here, we give a normalization of $\d k$ by $\int_K \d k=1$. 
For each $j=1$ or $2$, we denote by $\d g_j$ (resp $\d z_j$) the Haar measure on $G_j(\A)=D_\A^\times$ (resp. $Z(\A)\cong \A^\times$) as above. 
Then, a Haar measure $\d h$ on $H(\A)$ is chosen by 
    \[
    \d h\coloneqq \d g_1 \, \d g_2 \, \d g_3/\d z_1\, \d z_2
    \]
for $h=Z_\rho (g_1,g_2,g_3)\in H(\A)$ and $(z_1,z_2,z_1z_2^{-1})\in Z_\rho(\A)$.
Set
    \[
    \Phi_K(x)\coloneqq  \int_K \Phi(x\cdot \rho(1,1,k)) \, \d k.
    \]
The principal part of $Z(\Phi,\phi,s)$ is described as below.
\begin{thm}\label{thm:global}
Assume that $\phi$ is cuspidal, and is orthogonal to the constant functions on $PD^\times(F)\bsl PD^\times_\A$.
For any $\Re(s)>T$ and any $\Phi \in \cS(V(\A))$, we obtain
\begin{align*}
Z(\Phi,\phi,s)=& Z_+(\Phi,\phi,s)+Z_+^\vee(\widehat\Phi,\phi,2-s) \\
&  +c_F \frac{Z^\GJ(\big(\widehat{\Phi}_K\big)_1,f_{\phi}^\vee,1)}{2s-3}-c_F \frac{Z^\GJ(\left(\Phi_K\right)_1,f_{\phi},1)}{2s-1},
\end{align*}
where $\Phi_1(x)\coloneqq \Phi(x,0)$.
Note that we have $\widehat{(\Phi_K)}=\widehat{\Phi}_K$ and $\Phi_K(x_1,x_2)=\Phi_K(x_2,x_1)$. 
Hence, $Z(\Phi,\phi,s)$ is meromorphically continued to the whole $s$-plane, and might have a simple pole at $s=3/2$ or $s=1/2$.
\end{thm}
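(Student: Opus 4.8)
The plan is to run the Poisson-summation argument of Shintani and H.~Saito for prehomogeneous zeta functions, adapted to the cuspidal weights $\phi_1,\phi_2$, and then read off the polar terms from the single singular orbit that survives cuspidality. I would first split the $h$-integral at $|\omega(h)|=1$, writing $Z(\Phi,\phi_1,\phi_2,s)=Z_+(\Phi,\phi_1,\phi_2,s)+Z_-(\Phi,\phi_1,\phi_2,s)$ with $Z_-$ the integral over $\{|\omega(h)|<1\}$; by Lemma~\ref{lem:abs} the piece $Z_+$ is entire, so only $Z_-$ needs to be understood, and for $\Re(s)>T$ all the manipulations below are justified by absolute convergence. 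In $Z_-$ I would complete the sum over $V^0(F)$ to the full lattice sum over $V(F)$ by subtracting the sum over the singular locus $\{x\in V(F):P(x)=0\}$, and apply Poisson summation on $V(F)\subset V(\A)$: since $V$ is self-dual under $\langle\,,\,\rangle$ the adjoint of $\rho(h)$ is $\rho^\vee(h)$, and the determinant of $\rho(h)$ acting on $D\oplus D$ unwinds to $\omega_0(h)^4=\omega(h)^2$, so with the self-dual measure one gets $\sum_{x\in V(F)}\Phi(x\cdot\rho(h))=|\omega(h)|^{-2}\sum_{y\in V(F)}\widehat\Phi(y\cdot\rho^\vee(h))$. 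Splitting the dual lattice again at its singular locus and using $|\omega(h)|<1\iff|\omega^\vee(h)|>1$ together with $\omega^\vee=\omega^{-1}$, the regular part of the dual sum becomes exactly $Z_+^\vee(\widehat\Phi,\phi_1,\phi_2,2-s)$, which is entire. What is left is the difference of two singular-locus sums, integrated over $\{|\omega(h)|<1\}$ with weights $|\omega(h)|^s$ (original side) and $|\omega(h)|^{s-2}$ (dual side).

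I would then evaluate these singular contributions orbit by orbit using the decompositions in Propositions~\ref{prop:sor1} and \ref{prop:sor2}. For every singular $G(F)$-orbit other than the origin and the orbit of $x_2$, the stabilizer contains a maximal unipotent subgroup of $G_1$ or of $G_2$: for $x_1$ and $x_5$ this is read directly off the list, and for $x_3=(E_{11},E_{12})$, $x_4=(E_{12},E_{22})$ one checks that preserving the common row space (resp.\ column space) forces $g_1$ (resp.\ $g_2$) to be upper triangular. Hence, after unfolding the orbit sum against the $H(F)\backslash H(\A)$-integral, the inner integral contains $\int_{N(F)\backslash N(\A)}\phi_i=0$ by cuspidality and that orbit drops out; in the division case Proposition~\ref{prop:sor2} already leaves only the origin and $x_2$. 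The origin contributes $\Phi(0)$ (resp.\ $\widehat\Phi(0)$) times $\int_{H(F)\backslash H(\A),\,|\omega(h)|<1}|\omega(h)|^{s'}\phi_1(g_1)\phi_2(g_2)\,dh$, and integrating out the $\GL_1(D)$-variable attached to whichever of $\phi_1,\phi_2$ is orthogonal to the constants annihilates it — this is the only place the orthogonality hypothesis is used.

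It then remains to compute the orbit of $x_2$. Unfolding gives $\int_{H_{x_2}(F)\backslash H(\A),\,|\omega(h)|<1}|\omega(h)|^s\,\phi_1(g_1)\phi_2(g_2)\,\Phi(x_2\cdot\rho(h))\,dh$, together with the analogous integral on the dual side ($\widehat\Phi$, $\rho^\vee$, weight $|\omega(h)|^{s-2}$). Because $H_{x_2}$ fixes $x_2$, the Schwartz factor is constant along the stabilizer, so the integral of $\phi_1(g_1)\phi_2(g_2)$ over $H_{x_2}(F)\backslash H_{x_2}(\A)$ collapses to the matrix coefficient $f_{\phi_1,\phi_2}$ on the residual $\GL_1(D)$-direction — here the trivial central character absorbs the torus tying $g_1$ to $g_2$, and $\vol(F\backslash\A)=1$ disposes of the unipotent of the $\GL_2$-factor. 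The Iwasawa decomposition of the $\GL_2$-factor turns the last $\GL_2$-direction into an integral over $K$, producing $\Phi_K$; since only one slot of $x_2\cdot\rho(h)$ survives the averaging, one meets $(\Phi_K)_1$. Passing to a coordinate in which $\lambda=|\omega(h)|$ is explicit, the $\GL_1(D)$-direction assembles into the Godement--Jacquet integral, which by a bookkeeping of exponents ends up evaluated at its central point $s=1$, while the non-compact $\lambda$-direction truncated at $\lambda<1$ contributes the simple pole and the compact part $F^\times\backslash\A^1$ of that direction contributes $c_F$; this gives $-c_F\,Z^\GJ((\Phi_K)_1,f_{\phi_1,\phi_2},1)/(2s-1)$. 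The dual orbit is handled identically with $\widehat\Phi$, $f^\vee_{\phi_1,\phi_2}$ and weight $|\omega(h)|^{s-2}$, the same truncation argument now giving $+c_F\,Z^\GJ((\widehat\Phi_K)_1,f^\vee_{\phi_1,\phi_2},1)/(2s-3)$. Adding the four pieces proves the identity for $\Re(s)>T$; since each summand is meromorphic on $\C$ — the values $Z^\GJ(\,\cdot\,,1)$ being constants because Godement--Jacquet integrals of cuspidal matrix coefficients are entire — this gives the meromorphic continuation with at worst simple poles at $s=3/2,1/2$, and the asserted $\widehat{(\Phi_K)}=\widehat\Phi_K$, $\Phi_K(x_1,x_2)=\Phi_K(x_2,x_1)$ follow from the $\rho\leftrightarrow\rho^\vee$ equivariance of the Fourier transform under $K$ and the fact that the element of $\GL_2$ interchanging the two copies of $D$ lies in $K$.

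The main obstacle is the bookkeeping in these last two steps: establishing that $x_2$ is the \emph{unique} surviving singular orbit (the $x_3,x_4$ cases needing the little row/column-space argument rather than the naive reading of the list), and then tracking every normalization — the self-dual Jacobian $|\omega(h)|^2$, the $c_F$-type volume, and the shift of exponents that pins the Godement--Jacquet factor exactly at $s=1$ and places the poles at $2s-1$ and $2s-3$ — so that the constants come out precisely as stated rather than up to an unidentified scalar.
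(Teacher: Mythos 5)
Your broad strategy matches the paper's: split at $|\omega(h)|=1$, Poisson on $V(F)$, observe that the difference of singular-locus sums is all that survives, then argue orbit by orbit using Propositions~\ref{prop:sor1}--\ref{prop:sor2}, with the $x_2$-orbit producing the Godement--Jacquet term and the others killed by cuspidality. Your scaling bookkeeping ($\det\rho(h)=\omega_0(h)^4=\omega(h)^2$, and the $t^{-1}$ vs.\ $t^{-3}$ exponents that place the poles at $2s-1$ and $2s-3$) is also correct.

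The genuine gap is your sentence ``for $\Re(s)>T$ all the manipulations below are justified by absolute convergence.'' That is false in the crucial place: the $x_2$-orbit contribution to $\int_{|\omega(h)|<1}$ is \emph{not} absolutely integrable in $s$ alone (if it were, there would be no pole at all), and neither the $\Phi$-side nor the $\widehat\Phi$-side singular sum converges individually over $H(F)\backslash H(\A)^1$. The paper spends almost the entire argument making this rigorous: it inserts an auxiliary Eisenstein-type factor $\sum_{\gamma\in B(F)\backslash\GL_2(F)}e^{-wH_B(\gamma g_3)}$, replaces $\sum_{w\in D^\times}\Phi(\cdot)$ by $\sum_{w\in D^\times}\Phi(\cdot)-\int_{D_\A}\Phi(\cdot)\d x$ and likewise on the dual side, shows absolute convergence of the regularized expressions for $-1<\Re(w)<1$ by subdividing the $t$- and $H_B$-ranges, exploits the exact cancellation $\int_{D_\A}\Phi((0,x)\rho(h))\d x-\int_{D_\A}\widehat\Phi((x,0)\rho^\vee(h))\d x=0$ on $H(\A)^1$, analytically continues the resulting $\mathscr{Y}(w)$, and only then sets $w=0$ to extract $Z^{\GJ}(\cdot,1)$. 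Your proposal simply unfolds and applies $\int_{N(F)\backslash N(\A)}\phi_i=0$; but without establishing the convergence that licenses the unfolding and the sum/integral interchange, that step is formal, not a proof. The same objection applies, with a little less force, to your treatment of $x_1,x_3,x_4,x_5$ — the paper's Lemmas~\ref{lem:contx2}, \ref{lem:contx34}, \ref{lem:contx5} again devote most of their length to convergence before invoking cuspidality, and $x_1$ is deliberately lumped with $x_2$ via the $\mathrm{rank}(w)>0$ decomposition rather than treated in isolation, precisely because the rank-one sum alone is not well-behaved. So the proposal captures the right skeleton but misidentifies where the difficulty lies: it is not the final ``bookkeeping of normalizations'' you flag, but the regularization needed to make the Poisson-and-unfold step legitimate.
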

\begin{proof}
The proof will be given in \S~\ref{sec:principleproof}.
\end{proof}
\begin{cor}\label{cor:funct}
The zeta integrals satisfy the functional equation
    \[
    Z(\Phi,\phi,s)=Z^\vee(\widehat\Phi,\phi,2-s).
    \]
\end{cor}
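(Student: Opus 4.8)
The plan is to deduce the functional equation formally from the principal-part formula of Theorem~\ref{thm:global}, by writing down the analogous expansion for $Z^\vee(\widehat\Phi,\phi_1,\phi_2,2-s)$ and matching it termwise with the expansion of $Z(\Phi,\phi_1,\phi_2,s)$. The first step is to relate $Z^\vee$ to $Z$ by an elementary symmetry. Let $\tau$ be the involution of $G$ defined by $\tau(g_1,g_2,g_3)=(g_2,g_1,{}^tg_3^{-1})$. From the explicit descriptions of $\rho$, $\rho^\vee$, $\omega$ and $Z_\rho$ given above one checks directly that $\tau$ preserves $Z_\rho$, hence induces an $F$-rational involution of $H$, and that $\rho^\vee=\rho\circ\tau$ and $\omega^\vee=\omega\circ\tau$. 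Since $g_3\mapsto{}^tg_3^{-1}$ is an automorphism of $\GL_2$, exchanging $g_1$ and $g_2$ preserves Haar measure, and $\tau$ stabilizes $Z_\rho(\A)$, the chosen measure $\d h$ is $\tau$-invariant; substituting $h\mapsto\tau(h)$ in the defining integrals gives
\[
Z^\vee(\Phi,\phi_1,\phi_2,s)=Z(\Phi,\phi_2,\phi_1,s),\qquad Z_+^\vee(\Phi,\phi_1,\phi_2,s)=Z_+(\Phi,\phi_2,\phi_1,s).
\]
In particular a principal-part formula for $Z^\vee$ is obtained from Theorem~\ref{thm:global} by exchanging the roles of $\phi_1$ and $\phi_2$.

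Next I would apply Theorem~\ref{thm:global} to both $Z(\Phi,\phi_1,\phi_2,s)$ and $Z(\widehat\Phi,\phi_2,\phi_1,2-s)$, the latter being equal to $Z^\vee(\widehat\Phi,\phi_1,\phi_2,2-s)$ by the symmetry above; the expansion of Theorem~\ref{thm:global}, being a sum of entire functions and two simple rational terms, holds on all of $\C$, so no restriction on $s$ is needed. In the second expansion I would simplify using the following observations, all elementary: Fourier inversion $\widehat{\widehat\Phi}(x)=\Phi(-x)$, valid since the measure on $V(\A)$ is self-dual with respect to $\psi_F(\langle\;,\;\rangle)$ and $\langle\;,\;\rangle$ is symmetric; the definitional identity $f^\vee_{\phi_1,\phi_2}=f_{\phi_2,\phi_1}$; the commutation of $\Phi\mapsto\Phi(-\,\cdot\,)$ with $\Phi\mapsto\Phi_K$ and with $(\,\cdot\,)_1$; the invariance of $Z_+$ under $\Phi\mapsto\Phi(-\,\cdot\,)$, which holds because $V^0(F)=-V^0(F)$; and the invariance of the Godement--Jacquet integrals $Z^\GJ((\,\cdot\,)_1,f_{\phi_1,\phi_2},1)$ under $\Psi\mapsto\Psi(-\,\cdot\,)$, which follows from the substitution $g\mapsto -g$ together with $|\det(-g)|=|\det g|$ on $D_\A^\times$ and $f_{\phi_1,\phi_2}(-g)=f_{\phi_1,\phi_2}(g)$ (here $-1$ lies in the center $Z(\A)$, under which $\phi_1,\phi_2$ are invariant). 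Finally, the two denominators transform by $2(2-s)-3=-(2s-1)$ and $2(2-s)-1=-(2s-3)$, which interchanges the two polar terms. Carrying out this bookkeeping, the four summands in the expansion of $Z(\widehat\Phi,\phi_2,\phi_1,2-s)$ reproduce, after reordering, exactly the four summands in the expansion of $Z(\Phi,\phi_1,\phi_2,s)$: the two $Z_+$-type terms swap and the two polar terms swap. This yields the identity $Z(\Phi,\phi_1,\phi_2,s)=Z^\vee(\widehat\Phi,\phi_1,\phi_2,2-s)$ of meromorphic functions on $\C$.

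I do not foresee any real obstacle: once Theorem~\ref{thm:global} is available the corollary is purely formal, and the only points demanding care are the $\tau$-invariance of $\d h$ and the various $x\mapsto -x$ symmetries, which are routine. As an alternative one could bypass the symmetry identity and instead rerun the proof of Theorem~\ref{thm:global} in \S\ref{sec:principleproof} with $\rho$ replaced everywhere by $\rho^\vee$ --- the underlying Poisson-summation argument is symmetric in $\rho$ and $\rho^\vee$ --- thereby obtaining the principal-part formula for $Z^\vee$ directly and then comparing; this route is slightly longer but conceptually identical.
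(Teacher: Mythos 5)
Your proof is correct and follows essentially the same line as the paper: the paper writes down the companion principal-part expansion for $Z^\vee$ obtained ``by comparing $\rho^\vee$ with $\rho$'' and then compares with Theorem~\ref{thm:global}, and that companion expansion is precisely what your involution $\tau$ (together with the $f_{\phi_1,\phi_2}^\vee=f_{\phi_2,\phi_1}$ identity) produces. You merely make explicit the symmetry $Z^\vee(\Phi,\phi_1,\phi_2,s)=Z(\Phi,\phi_2,\phi_1,s)$ and the small bookkeeping with $\widehat{\widehat\Phi}=\Phi(-\cdot)$ that the paper leaves implicit.
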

\begin{proof}
Comparing $\rho^\vee$ with $\rho$, one can show that
    \begin{align*}
    Z^\vee(\Phi,\phi,s)=& \, Z_+^\vee(\Phi,\phi,s)+Z_+(\widehat\Phi,\phi,2-s) \\
    &  +c_F \frac{Z^\GJ(\big(\widehat{\Phi}_K\big)_1,f_{\phi},1)}{2s-3}-c_F\frac{Z^\GJ(\left(\Phi_K\right)_1,f_{\phi}^\vee,1)}{2s-1}.
    \end{align*}
The functional equation follows from this equation and Theorem \ref{thm:global}.
\end{proof}

Set $\sK\coloneqq \{(1,1,k)\in G(\A) \mid k\in K\}$.
We say that $\Phi$ is $\sK$-spherical if $\Phi(x\cdot \rho(k))=\Phi(x)$ holds for any $k\in \sK$.
We may suppose that $\Phi$ is $\sK$-spherical without loss of generality, because $\phi(g_1)\, \overline{\phi(g_2)
}$ and $|\det(g)|^s$ are stable under $\sK$.
Note that $\Phi_K$ is $\sK$-spherical.


\begin{prop}\label{prop:202001241}
Let $\pi=\otimes_v \pi_v$ be an automoprhic representation of $D_\A^\times$ with trivial central character.
When $D$ is division, we suppose that $\pi$ is not one dimensional. When $D=\M_2(F)$, we suppose that $\pi$ is cuspidal.
If $L(1/2,\pi)\neq 0$, then there exist $\phi=\otimes_v\phi_v\in\pi$ and $\sK$-spherical $\Phi=\otimes_v\Phi_v\in \cS(V(\A))$ such that $Z(\Phi,\phi,s)$ has a simple pole at $\frac12$ (this means $Z(\Phi,\phi,s)\not\equiv 0$).
\end{prop}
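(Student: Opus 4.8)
The plan is to exploit the principal part formula of Theorem \ref{thm:global} together with the Godement--Jacquet theory. First I would specialize $\phi_1 = \phi$ and $\phi_2 = \overline{\phi}$, so that the matrix coefficient $f_{\phi,\overline{\phi}}$ is (up to a positive constant) a genuine matrix coefficient of $\pi$ of positive type, and $f_{\phi,\overline{\phi}}^\vee = f_{\overline{\phi},\phi}$ is its complex conjugate. The cuspidality hypothesis on $\phi_1,\phi_2$ is satisfied when $\pi$ is cuspidal; when $D$ is division and $\pi$ is infinite-dimensional but $\JL(\pi)$ is not cuspidal one uses that $\pi$ still sits in $L^2(Z(\A)D^\times\bsl D_\A^\times)$ and the relevant orthogonality to constants holds because $\pi$ is not one-dimensional. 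With this choice, Theorem \ref{thm:global} expresses $Z(\Phi,\phi,\overline\phi,s)$ as a sum of the two entire pieces $Z_+$ and $Z_+^\vee$ plus
\[
c_F\,\frac{Z^\GJ\big((\widehat{\Phi}_K)_1, f_{\overline\phi,\phi},1\big)}{2s-3} - c_F\,\frac{Z^\GJ\big((\Phi_K)_1, f_{\phi,\overline\phi},1\big)}{2s-1}.
\]
Thus $Z(\Phi,\phi,\overline\phi,s)$ has a simple pole at $s=1/2$ precisely when $Z^\GJ\big((\Phi_K)_1, f_{\phi,\overline\phi},1\big) \neq 0$, and in that case the zeta function is not identically zero.

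The core of the argument is therefore to choose $\Phi$ and $\phi$ so that the Godement--Jacquet integral $Z^\GJ(\Psi, f_{\phi,\overline\phi},1)$ with $\Psi = (\Phi_K)_1$ is nonzero. Here $(\Phi_K)_1(x) = \Phi_K(x,0)$ is a Schwartz function on $D_\A$, obtained from $\Phi$ by first restricting to the first copy of $D_\A$ and then averaging over $K$ in the $\GL_2$-variable; since we are free to pick $\Phi = \otimes_v \Phi_v$ with $\Phi_v$ supported near the identity in the first factor and arbitrary in the second, $(\Phi_K)_1$ can be made to range over a rich family of $\sK$-invariant Schwartz functions on $D_\A$, essentially a product $\otimes_v \Psi_v$ of local Schwartz functions on $D_v$. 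Now recall that by the Godement--Jacquet functional equation and the standard unramified computation, for factorizable data one has $Z^\GJ(\Psi, f_{\phi,\overline\phi},s)$ equal (up to nonzero normalizing constants) to $L(s - 1/2, \pi)$ times a product of local integrals, each of which can be made nonzero at $s=1$ by choosing $\Psi_v$ and $\phi_v$ appropriately at the finitely many ramified places. Since $L(1/2,\pi) = L(1 - 1/2,\pi)\neq 0$ by hypothesis, a suitable choice of factorizable $\Psi = (\Phi_K)_1$ and $\phi$ makes $Z^\GJ((\Phi_K)_1, f_{\phi,\overline\phi},1)\neq 0$.

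I expect the main obstacle to be the bookkeeping that converts "$\Phi$ arbitrary $\sK$-spherical Schwartz function on $V(\A)$" into "$(\Phi_K)_1$ an arbitrary factorizable $\sK$-invariant Schwartz function on $D_\A$", and in particular checking that the $K$-averaging operation $\Phi \mapsto \Phi_K$ does not annihilate the relevant component: one must verify that there exists $\Phi_v$ with $(\Phi_{v,K})_1 = \Psi_v$ nonzero and matching the prescribed local behaviour. This is where one uses that the local integrals defining the Godement--Jacquet zeta function at $s=1$ (equivalently at the central point for $\pi$) are nonvanishing on an open set of test data — a standard fact from the representation theory of $D_v^\times$ and the nonvanishing of the local $L$-factor's regularized integral, valid because $\pi_v$ is generic (for $v$ where $D$ splits) or the twisted Jacquet--Langlands image is, and $\pi$ is not one-dimensional. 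Once this local nonvanishing is in hand, one patches the local choices, invokes $L(1/2,\pi)\neq 0$ for the global factor, and concludes $Z(\Phi,\phi,\overline\phi,s)\not\equiv 0$ via the residue at $s=1/2$.
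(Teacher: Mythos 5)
Your overall strategy matches the paper's exactly: specialize to $\phi_1=\phi$, $\phi_2=\overline\phi$, read off from Theorem \ref{thm:global} that a simple pole at $s=1/2$ (and hence nonvanishing of the zeta function) is equivalent to $Z^\GJ\big((\Phi_K)_1,f_{\phi,\overline\phi},1\big)\neq 0$, and then use the Godement--Jacquet Euler product to peel off $L(1/2,\pi)$ and reduce to nonvanishing of finitely many local integrals. Up to that point, you and the paper are doing the same thing.

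The gap is exactly where you flag the ``main obstacle'' and then wave at it as a ``standard fact.'' The local nonvanishing that you need is not the unconstrained statement ``$Z^\GJ(\Psi_v,f_v,1)\neq 0$ for some Schwartz $\Psi_v$ on $D_v$'' (which is indeed classical Godement--Jacquet). You need nonvanishing under the constraint that $\Psi_v$ arises as $(\Phi_{v})_1$ for a $\sK_v$-spherical $\Phi_v$, i.e.\ $\Psi_v(x_1)=\int_{K_v}\Phi_{1,v}(x_1 k_{11},x_1 k_{12})\,\d k$ for some $\Phi_{1,v}\in\cS(V(F_v))$. This average couples the first coordinate of $\Phi$ to the second through the rotation in $\GL_2$ and does not obviously let $(\Phi_K)_1$ sweep out ``a rich family, essentially a product $\otimes_v\Psi_v$''; in fact, whether a prescribed $\Psi_v$ can be produced depends delicately on the symmetry of $\Psi_v$ under $K_v$. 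The paper's proof consists precisely in constructing explicit $\Phi_{1,v}$ for which the $K_v$-average is computable and the resulting GJ integral is nonzero: at finite places this uses congruence truncations $\tilde\Psi_{v,m},\Psi_{v,m}$ around $I_2$ and $0$ and the fact that for $m>0$ the $K_v$-average recovers $\tilde\Psi_{v,m}$; at archimedean places it requires a case-by-case construction (real discrete series, non-spherical unitary principal series for $\GL_2(\R)$, the division case $\mathbb{H}^\times$, and the complex case), where one chooses $\tilde\Psi_v$ transforming by the right $K_v$-type (e.g.\ $f_v(x_1)^{-1}\det(x_1)^2\Psi_{v,0}(x_1)$ in the discrete series case) so that the $K_v$-average is a positive multiple of $\tilde\Psi_v$ and the GJ integral is manifestly nonzero. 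None of this is a ``standard fact''; it is the actual content of the proposition, and your proposal leaves it unestablished.

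A secondary (smaller) imprecision: the matrix coefficient $f_{\phi,\overline\phi}$ entering the unramified Euler product must be factorizable ($f_{\phi,\overline\phi}=\otimes_v f_v$ with $f_v(g_v)=\langle\pi_v(g_v)\phi_v,\phi_v\rangle_v$), which requires $\phi=\otimes_v\phi_v$ to be a pure tensor; your set-up is compatible with this, but you should state it so that the factorization $Z^\GJ=L(1/2,\pi)\cdot\prod_{v\in S}(\cdots)$ is justified. Beyond these points the residue-at-$s=1/2$ argument and the appeal to $L(1/2,\pi)\neq 0$ are exactly as in the paper.
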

\begin{proof}
Let $K_v$ denote the standard maximal compact subgroup of $\GL_2(F_v)$, see \S\ref{sec:measure1}.
Then, we have $K=\prod_{v\in\Sigma}K_v$ and $\sK=\prod_{v\in\Sigma} \sK_v$ where $\sK_v\coloneqq \{(1,1,k)\in G(F_v) \mid k\in K_v \}$.
Let $S$ denote a finite set of places which contains all infinite places. 
Take a $\sK$-spherical test function $\Phi=\otimes_{v\in\Sigma}\Phi_v\in \cS(V(\A))$ where $\Phi_v\in\cS(V(F_v))$. 
If $S$ is sufficiently large, then we have
    \[
    Z^\GJ(\left(\Phi\right)_1,f_{\phi},1) = L(1/2,\pi) \times \prod_{v\in S}  \frac{Z^\GJ((\Phi_v)_1,f_v,1)}{L_v(1/2,\pi_v)} 
    \]
where $f_{\phi}=\otimes_v f_v$, $f_v(g_v^{-1})=\langle \pi_v(g_v)\phi_v,\phi_v\rangle_v$, and
    \[
    Z^\GJ(\Phi_v,f_v,s)\coloneqq \int_{D_v^\times} f_v(g_v) \, \Phi_v(g_v) \, |\det(g_v)|_v^s \, \d g_v \quad (\Psi_v\in\cS(D_v)). 
    \]
If this does not vanish, then $Z(\Phi,\phi,s)$ has a simple pole at $s=1/2$ by Theorem \ref{thm:global}, particularly $Z(\Phi,\phi,s)$ is not identically zero.
It is sufficient to prove that $Z^\GJ((\Phi_v)_1,f_v,1)$ does not vanish for each $v\in S$ for some vector $\phi_v$ and a $\sK_v$-spherical test function $\Phi_v$.

Let $v$ be a finite place in $S$, and denote by $\fO_v$ a maximal order of $D_v$.
Let $\Psi_{v,m}$ (resp. $\tilde\Psi_{v,m}$) denote the characteristic function of $\varpi_v^{m}\fO_v$ (resp. $\fo_v^\times+\varpi_v^m\fO_v$) where $m\in\Z_{>0}$.
Set
    \[
    \tilde\Phi_{v}(x_1,x_2)\coloneqq \tilde\Psi_{v,m}(x_1)\, \Psi_{v,m}(x_2), \quad \Phi_v(x)\coloneqq \int_{K_v} \tilde\Phi_{v}(x\cdot\rho(1,1,k))\, \d k.
    \]
For any $\left(
    \begin{smallmatrix}
    a& b \\ c& d
    \end{smallmatrix}
\right)\in K_v$, we have $a\in\fo_v^\times$ or  $b\in\fo_v^\times$, hence
    \[
        \begin{cases}
        ax_1\in \fo_v^\times+\varpi_v^{m}\fO_v \;\;  \\
        \text{and} \;\; bx_1\in \varpi_v^{m}\fO_v 
        \end{cases}
\Leftrightarrow 
        \begin{cases}
        x_1\in \fo_v^\times+\varpi_v^{m}\fO_v \;\; \\
        \text{and} \;\; a\in\fo_v^\times, \;\; b\in \varpi_v^{m}\fo_v.
        \end{cases}
    \]
We have $(\Phi_v)_1(x_1)=\Phi_v(x_1,0)=c\,\tilde\Psi_{v,m}(x_1)$ for some constant $c>0$. 
Therefore, there is a positive integer $m$ and a vector $\phi_v\in\pi_v$ such that $Z^\GJ((\Phi_v)_1,f_v,1)\neq 0$.

In what follows, we suppose that $v$ is an infinite place of $F$. 
Set 
    \[
    \Psi_{v,0}(x)\coloneqq 
        \begin{cases}
        e^{-\pi \Tr(x{}^t\!x)} & \text{if $D_v\cong \M_2(\R)$}, \\
        e^{-2\pi \Tr(x{}^t\!\overline{x})} & \text{if $D_v\cong \M_2(\C)$,} \\
        e^{-\pi \Tr(xx^\iota)} & \text{if $D_v\cong \mathbb{H}$,}
        \end{cases}  
    \]
where $\mathbb{H}$ denotes the Hamiltonian quaternion and $x\mapsto x^\iota$ is the usual involution on $\mathbb{H}$. 
By the classification of the unitary representations, we have only to check the following cases: 
\begin{itemize}
\item[(I)] $D_v\cong \M_2(\R)$ and $\pi_v$ is a discrete series.
\item[(II)] $D_v\cong \mathbb{H}$ and $\pi_v$ is finite dimensional. 
\item[(III)] $D_v\cong \M_2(F_v)$ and $\pi_v$ is a unitary principal series.
\end{itemize}


Case (I): As a matrix coefficient of $\pi_v$, we can take $\phi_v$ so that 
    \[
    f_v(\left(
        \begin{smallmatrix}
        a&b \\ 
        c&d
        \end{smallmatrix}
    \right))=
        \begin{cases} 
        \det\left(
            \begin{smallmatrix}
            a&b\\ 
            c&d
            \end{smallmatrix}
        \right)^l \, (a+d+i(b-c))^{-2l} & \text{if $\det\left(
            \begin{smallmatrix}
            a&b\\ 
            c&d
            \end{smallmatrix}
        \right)>0$}, \\
        0& \text{otherwise}, 
        \end{cases}
    \]
for some $l\in\Z_{\geq 1}$. 
Set 
    \[
    \tilde\Psi_v(x_1)\coloneqq \begin{cases} f_v(x_1)^{-1}\det(x_1)^l\, \Psi_{v,0}(x_1) & \text{if $\det(x_1)> 0$,} \\ 0 & \text{if $\det(x_1)\le 0$}, \end{cases}  
    \]
    \[
    \tilde\Phi_{v}(x_1,x_2)\coloneqq \tilde\Psi_v(x_1)\, \Psi_{v,0}(x_2),
    \]
and $\Phi_v(x)\coloneqq \int_{K_v} \tilde\Phi_{v}(x\cdot\rho(1,1,k))\, \d k$. 
When $\det(x_1)>0$, we deduce
    \begin{multline*}
    (\Phi_v)_1(x_1)=\Phi_v(x_1,0)=\int_{K_v} \tilde\Phi_{v}((x_1,0)\cdot\rho(1,1,k))\, \d k \\
    =\Psi_{v,0}(x_1) \Psi_{v,0}(0)  \int_{K_v} f_v(k_1x_1)^{-1}\det(k_1x_1)^l \d k = c_l\,\tilde\Psi_v(x_1) 
    \end{multline*}
where $k=\left( 
    \begin{smallmatrix}
    k_1&k_2 \\ k_3&k_4
    \end{smallmatrix} 
\right)$ and $c_l\coloneqq \int_{K_v} k_1^{2l} \d k>0$, since the function $\Psi_{v,0}(x_1)\, \Psi_{v,0}(x_2)$ on $V(F_v)$ is $\mathscr{K}_v$-spherical, and $f_v(ax_1)=f_v(x_1)$ holds for any $a\in F_v^\times$.  
Therefore, the integral $Z^\GJ((\Phi_v)_1,f_v,1)$ is convergent and non-zero.

Case (II): Since $\mathbb{H}^\times$ is isomorphic to $\R_{>0}\times \mathrm{SU}_2$, 
there is an element $l\in\Z_{\ge 0}$ such that $f_v(x)\, \det(x)^{l}$ is a homogeneous polynomial of degree $2l$ on $\mathbb{H}$. 
We set
    \[
    \tilde\Psi_v(x_1)\coloneqq \overline{f_v(x_1)}\, \det(x_1)^{l+1}\, \Psi_{v,0}(x_1), \quad \tilde\Phi_{v}(x_1,x_2)\coloneqq \tilde\Psi_v(x_1)\, \Psi_{v,0}(x_2) 
    \]
and $\Phi_v(x)\coloneqq \int_{K_v} \tilde\Phi_{v}(x\cdot\rho(1,1,k))\, \d k$. 
Then, the integral $Z^\GJ((\Phi_v)_1,f_v,1)$ is convergent and non-zero by the same reason as in (I).

Case (III): In this case, $\pi_v$ is an induced representation of $\chi\boxtimes \chi^{-1}$, where $\chi$ is a quasi-character on $F_v^\times$. 
Let $\mu$ denote the minimal $K_v$-type of $\pi_v$. Note that $\mu$ is self-dual. 
When $F_v=\R$, $\mu$ is trivial or $\det$, and we set $l=0$ in this case. 
When $F_v=\C$, for some $l\ge 0$, $\mu$ is the $2l$-th symmetric tensor representation of $K_v$ tensored with $\det^{-l}$. 
Then, $\mu$ extends to a homomorphism $\GL_2(F_v)\to \GL_{2l+1}(F_v)$, $\mu\otimes\det^l$ is a polynomial representation of $\GL_2(F_v)$, and $\mu(K_v)$ is contained in $\U_{2l+1}(\C/\R)$. 

Let us consider its compact picture as the representation space $H_{\pi_v}$ of $\pi_v$. 
Set $T\coloneqq\{\diag(a,b)\in K_v \}$ and we define $\chi(\diag(a,b))\coloneqq\chi(ab^{-1})$ by abuse of notation.
Then, a dense subspace of $H_{\pi_v}$ consists of continuous functions $\varphi$ in $L^2(K)$ such that $\varphi(tk)=\chi(t)\varphi(k)$ holds for any $t\in T$. 
Hence, we can choose a non-zero vector $\phi_v$ in the dense subspace such that $\phi_v$ is a linear combination of matrix coefficients of $\mu$, and then $\phi_v$ is in $\mu\otimes\mu \, (\subset L^2(K_v))$ and $\phi_v\, \det^l$ extends to a polynomial representation of $\GL_2(F_v)$. 
Let $B$ denote the subgroup of upper triangular matrices in $\GL_2(F_v)$. 
Since $\mu$ is the $2l$-th symmetric tensor representation, there exists an element $A\in\GL_{2l+1}(F_v)$ such that $A\mu(b)A^{-1}$ is an upper triangular matrix for any $b\in B$ and we have
    \[
    A\mu(\diag(\alpha,\beta))A^{-1}=\diag(\alpha^{l}\beta^{-l}, \alpha^{l-1}\beta^{-l+1},\dots, \alpha^{-l} \beta^{l}).
    \]
Hence, we can define $\phi_v$ as the $(l+1,l+1)$-component of $A\mu(*)A^{-1}$, and hence it satisfies $\phi_v(\left(\begin{smallmatrix} \alpha& * \\ 0&\beta \end{smallmatrix}\right))=(\alpha \beta^{-1})^l$. 


We set
    \[
    \tilde\Psi_v^\mu(x_1)\coloneqq \overline{\mu(x_1)}\, \det(x_1)^{l+1} \overline{\det(x_1)}^{l+1} \, \Psi_{v,0}(x_1), \quad \tilde\Phi_{v}^\mu(x_1,x_2)\coloneqq \tilde\Psi_v^\mu(x_1)\, \Psi_{v,0}(x_2) 
    \]
and $\Phi_v^\mu(x)\coloneqq \int_{K_v} \tilde\Phi_{v}^\mu(x\cdot\rho(1,1,k))\, \d k$. 
Note that these are matrix valued functions. 
By the same reason as in (I), we obtain $(\Phi_v^\mu)_1(x_1)=\Phi_v^\mu(x_1,0)= c \times \tilde\Psi_v^\mu(x_1)$ for some constant $c>0$. 
By $f_v(g)=\int_{K_v}\phi_v(k g)\overline{\phi_v(k)}\d k$ and the Iwasawa decomposition $g=\left(\begin{smallmatrix} a& c \\ 0&b \end{smallmatrix}\right)k'\in\GL_2(F_v)$, $a$, $b\in\R_{>0}$, $c\in F_v$, we have
    \begin{multline*}
    Z^\GJ ((\Phi_v^\mu)_1,f_v,1)= \\
    c\int_{\GL_2(F_v)}\int_{K_v} (\pi_v(g)\phi_v)(1) \, \overline{\phi_v(k)} \, \overline{\mu_v(k^{-1}g)} \, \Psi_{v,0}(g) \, |\det(g)|^{2l+3} \, \d k \, \d g \\
    =c\int_{K_v}\d k \int_{K_v}  \d k'  \int_0^\inf \d^\times a \int_0^\inf \d^\times b \int_{F_v} \d c \\
    \chi(ab^{-1})\, (ab^{-1})^{\frac12} \, \overline{\phi_v(k)} \, \overline{\mu(k^{-1})} \,\overline{\mu(\left(
        \begin{smallmatrix} 
        a& * \\ 
        0&b 
        \end{smallmatrix}
    \right))}\, \overline{\mu(k')}  \, \phi_v(k') \, \Psi_{v,0}(\left(
        \begin{smallmatrix} 
        a& * \\ 
        0&b 
        \end{smallmatrix}
    \right)) \, a^{2l+2} \, b^{2l+3}.   
    \end{multline*}
Hence, applying the Schur orthogonality relation to the above integration over $K_v$, the trace of the matrix $Z^\GJ ((\Phi_v^\mu)_1,f_v,1)$ is
    \[
    c \int_0^\inf \d^\times a \int_0^\inf \d^\times b \int_{F_v} \d c \,
    \chi(ab^{-1})\, (ab^{-1})^{\frac12}  \, \Psi_{v,0}(\left(
        \begin{smallmatrix} 
        a& * \\ 
        0&b 
        \end{smallmatrix}
    \right)) \, a^{3l+2} \, b^{l+3}. 
    \]
By a direct calculation, we can express this integral as a product of the Gamma functions.  
Since the Gamma function has no zero points, the trace of $Z^\GJ ((\Phi_v^\mu)_1,f_v,1)$ is non-zero. 
\end{proof}


\subsection{Proof of Theorem \ref{thm:global}} \label{sec:principleproof}

For the case that $D$ is division and $\phi_1$, $\phi_2$ belongs to a one dimensional representation of $D_\A^\times$ with trivial central character, Taniguchi determined the principal part in \cite{Taniguchi1} by using the smooth Eisenstein series, but here we do not use it in order to make the argument simpler.
We will prove Theorem \ref{thm:global} by using the Poisson summation formula repeatedly and inserting additional terms.
Our arguments are similar to some previous works, see, e.g., \cite{Shintani}*{Proof of Lemma 4} and \cite{Kogiso}, see also \cite{FHW}*{\S~5.2} for the cancellation, which they did not mention.
To sum up, the technical problem we need to handle is to find cancellations of divergent terms and suitable additional terms.

For a moment, let us suppose $D= \M_2(F)$ and $\phi_1$ and $\phi_2$ are constant functions in the space of the trivial representation.
In this case, the principal part of $Z(\Phi,\phi_1,\phi_2,s)$ is not yet determined.
We briefly explain the difficulty.
There are two points where divergent terms appear in the calculation of the principal part using the Poisson summation formula.
One is the point where the series obtained by changing the order of the integral and the sum over singular elements diverges, and the other one is the point where the fundamental domain of the stabilizer has infinite volume.
In the case of the zeta functions with cusp forms, the second problem does not occur and we can determine the principal part by the method explained above.
On the other hand, when we consider the zeta function for the trivial representation, the second problem occurs for the regular element corresponding to $E=F\oplus F$.
As mentioned above, though this regular element is eliminated from the defining sum of the zeta functions, we should take them into account when using the Poisson summation formula.
Thus we can not apply the method of this paper to the zeta functions of the trivial representation.
The same problem also occurs for the space of binary quadratic forms.
They are handled by using the normalization of the multivariable zeta functions \cite{Shintani}, the Eisenstein series \cite{Yukiebook} and the truncation operators \cite{HW}.
Therefore, also in our spaces, it is expected that we need some additional techniques to deal with this problem and the argument becomes complicated.

\subsubsection{Contributions of singular orbits}

Recall the measures $\d z$ and $\d g_j$, see \S\ref{sec:3.2}. 
Set $Z(\A)^1\coloneqq \{z\in Z(\A)\mid |\det(z)|=1 \}$. 
A Haar measure $\d^1 z$ on $Z(\A)^1$ is defined by $\d^1 z\coloneqq \d^1 a$ for $z=a I_2$, $a\in\A^1$. 
We set
    \[
    \vol(Z_\rho)\coloneqq\left(\int_{Z(F)\bsl Z(\A)^1} \d^1 z \right)^2=c_F^2.
    \]
For each $1\le j\le 3$, we set $G_j(\A)^1\coloneqq \{g\in G_j(\A) \mid |\det(g)|=1 \}$. 
Then, we have an ismorphism
    \[
    G_j(\A)^1\times \R_{>0} \ni (g_j ,t) \mapsto t^{1/2}\, g_j \in G_j(\A). 
    \]
A Haar masure $\d^1 g_j$ on $G_j(\A)^1$ is obtained from the quotient of $\d g_j$ by $t^{-1}\d t$. 
A Haar measure $\d \Tilde{g}$ on $G(\A)^1\coloneqq G_1(\A)^1\times G_2(\A)^1\times G_3(\A)^1$ is defined by 
    \[
    \d \Tilde{g}\coloneqq\d^1 g_1 \, \d^1 g_2 \, \d^1 g_3 \quad \text{for} \quad \Tilde{g}=(g_1,g_2,g_3)\in G(\A)^1. 
    \]

We will use the notation $x_j$ which means the elements given in Propositions \ref{prop:sor1} and \ref{prop:sor2}.
By the Poisson summation formula on $V(F)$, when $\Re(s)$ is sufficiently large, we obtain
    \[
    Z(\Phi,\phi,s)=Z_+(\Phi,\phi,s)+Z_+^\vee(\widehat\Phi,\phi,2-s)+I(\Phi,\phi,s),
    \]
where
    \[
    I(\Phi,\phi,s)\coloneqq  \int_0^1 t^{2s}  I^1(\Phi_t,\phi)  \frac{\d t}{t}, \qquad \Phi_t(x)\coloneqq \Phi(t^{1/2}x),
    \]
\begin{multline*}
I^1(\Phi,\phi)\coloneqq \frac{1}{\vol(Z_\rho)}\int_{G(F)\bsl G(\A)^1}\, \d \tilde{g} \\
 \phi(g_1) \, \overline{\phi(g_2)}  \left(  \sum_{x\in V(F)\setminus V^0(F)} \widehat{\Phi}(x\cdot \rho^\vee(\tilde{g}))-  \sum_{x\in V(F)\setminus V^0(F)} \Phi(x\cdot\rho(\tilde{g}))    \right) ,
\end{multline*}
$(\Tilde{g}=(g_1,g_2,g_3)\in G(\A)^1)$.
For each subset $\mathcal{U}$ in $V(F)\setminus V^0(F)$, the integral
    \[
    \frac{1}{\vol(Z_\rho)}\int_{G(F)\bsl G(\A)^1} \phi(g_1) \, \overline{\phi(g_2)} \left(  \sum_{x\in \mathcal{U}} \widehat{\Phi}(x\cdot \rho^\vee(\tilde{g}))-  \sum_{x\in \mathcal{U}}  \Phi(x\cdot\rho(\tilde{g}))    \right) \, \d \tilde{g}
    \]
is called the contribution of $\mathcal{U}$ to $I^1(\Phi,\phi)$.
Notice that the contribution of $x_0=(0,0)$ to $I^1(\Phi,\phi)$ vanishes under the assumption of $\phi$, because
    \[
    \int_{G(F)\bsl G(\A)^1} \left| \phi(g_1) \, \overline{\phi(g_2)}    \Phi((0,0))    \right| \, \d \tilde{g} ,\;\; \int_{G(F)\bsl G(\A)^1} \left| \phi(g_1) \, \overline{\phi(g_2)}    \widehat{\Phi}((0,0))    \right| \, \d \tilde{g}
    \]
are convergent.
Therefore, to prove Theorem \ref{thm:global}, it is sufficient to determine the contributions of $\sqcup_{j=1}^5 x_j\cdot H(F)$ to $I^1(\Phi,\phi)$.

\begin{rem}
By the proof of Theorem \ref{thm:global}, the integral
    \[
    \int_0^\inf t^{2s} \left|  \int_{G(F)\bsl G(\A)^1} |\omega(\tilde{g})|^{s}\, \phi(g_1) \, \overline{\phi(g_2)}   \sum_{x\in V^0(F)} \Phi(t^{1/2}x\cdot \rho(\tilde{g}))    \, \d \tilde{g}\right| \frac{\d t}{t}
    \]
converges for $\Re(s)>3/2$. 
\end{rem}

\subsubsection{The case that $D$ is division}

Suppose that $D$ is division.
Let $B$ denote the Borel subgroup consisting of upper triangular matrices in $\GL_2$.
Set
    \[
    H_B\left( \begin{pmatrix}1&u \\ 0&1 \end{pmatrix} \begin{pmatrix}a&0 \\ 0&b \end{pmatrix}k\right)\coloneqq \log|a/b|  \qquad  (u\in\A, \;\; a,b\in\A^\times , \;\; k\in K).
    \]
\begin{lem}\label{lem:absdiv}
The contribution of $x_2\cdot H(F)$ to $I^1(\Phi,\phi)$ equals
    \[
    c_F \{ -Z^\GJ((\Phi_K)_1,f_{\phi},1)+Z^\GJ((\widehat{\Phi}_K)_1,f_{\phi}^\vee,1)  \}.
    \]
\end{lem}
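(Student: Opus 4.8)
The plan is to carry out the Poisson-summation/unfolding computation for the single nontrivial singular orbit $x_2 \cdot H(\Q)$ that appears in the division case (by Proposition \ref{prop:sor2}, there are only the orbits of $x_0 = (0,0)$ and $x_2 = (0,1)$, and the contribution of $x_0$ was already shown to vanish). Concretely, one starts from the contribution of $x_2 \cdot H(F)$ to $I^1(\Phi,\phi_1,\phi_2)$, namely
\[
\int_{H(F)\bsl H(\A)^1} \phi_1(g_1)\,\phi_2(g_2)\left(\sum_{x\in x_2\cdot H(F)}\widehat\Phi(x\cdot\rho^\vee(h)) - \sum_{x\in x_2\cdot H(F)}\Phi(x\cdot\rho(h))\right)\d^1 h,
\]
and unfolds each inner sum against the quotient $H_{x_2}(F)\bsl H(F)$. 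This collapses the integral over $H(F)\bsl H(\A)^1$ to an integral over $H_{x_2}(F)\bsl H(\A)^1$ of $\phi_1(g_1)\phi_2(g_2)\Phi(x_2\cdot\rho(h))$ (and similarly for $\widehat\Phi$). From the description of the stabilizer in Proposition \ref{prop:sor2} (equivalently the $D$ division analogue of the $x_2$ stabilizer in Proposition \ref{prop:sor1}: the condition forces $g_1 = c\,g_2$ for the relevant torus coordinate $c$), the remaining $h$-integral factors into an integral over $D_\A^\times$ carrying $\phi_1,\phi_2$ and an integral over the "$\GL_2$-direction" that produces the matrix coefficient $f_{\phi_1,\phi_2}$ together with a $\GL_2(\A)$-integral of $\Phi$ restricted to the line $x_2 \cdot \rho(1,1,g_3) = (0, \text{second column of } g_3)$ — which is exactly the shape of a Godement--Jacquet integral $Z^\GJ((\Phi_K)_1, f_{\phi_1,\phi_2}, \cdot)$.

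Next I would track the measure normalizations carefully: using the chosen Haar measure $\d h = \d g_1\,\d g_2\,\d g_3/\d z_1\,\d z_2$ on $H(\A)$, the Iwasawa decomposition $\d g_3 = (\d t_1/t_1)(\d t_2/t_2)\,\d^1a\,\d^1c\,\d b\,\d k$, and the $\sK$-sphericalization $\Phi_K$ (which we may assume without loss of generality, exactly as in the remark after Corollary \ref{cor:funct}), the $K$-integral and the $b$-integral are absorbed into $\Phi_K$ and $(\Phi_K)_1$, the $\A^1$-integrals contribute the factor $c_F = \vol(F^\times\bsl\A^1)$ (together with $\vol(F\bsl\A) = 1$), and the $t_1, t_2$ integrals recombine with the $|\det g|^s$ to give the Godement--Jacquet integral evaluated at the argument forced by $|\omega(h)| = 1$ — which, after chasing the exponents through $\omega(h) = \omega_0(h)^2 = \det(g_1)^{-2}\det(g_2)^2\det(g_3)^2$, pins the Godement--Jacquet variable to $1$. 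The $\Phi$-term yields $-c_F\,Z^\GJ((\Phi_K)_1, f_{\phi_1,\phi_2}, 1)$ and the $\widehat\Phi$-term yields $+c_F\,Z^\GJ((\widehat\Phi_K)_1, f^\vee_{\phi_1,\phi_2}, 1)$, with the sign coming directly from the $(\widehat\Phi - \Phi)$ structure of $I^1$ and the opposite action $\rho^\vee$ versus $\rho$ (using $f^\vee_{\phi_1,\phi_2} = f_{\phi_2,\phi_1}$ and the identification of the dual space via $\langle\,,\,\rangle$).

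The main obstacle I expect is twofold. First, one must justify interchanging the $h$-integral with the sum over $x_2\cdot H(F)$ and the subsequent unfolding: this requires absolute convergence, which holds for $\Re(s)$ large by Lemma \ref{lem:abs} but, more delicately, the unfolded $H_{x_2}(F)\bsl H(\A)^1$ integral must itself be absolutely convergent — here the cuspidality and orthogonality hypothesis on $\phi_1,\phi_2$ is what rescues us, since it makes the matrix coefficient $f_{\phi_1,\phi_2}$ rapidly decreasing on $Z(\A)D^\times\bsl D_\A^\times$ so that the Godement--Jacquet integral converges (even at the edge point, as recorded in the discussion around $Z^\GJ$). Second, one must verify that in the division case there are genuinely no other singular orbits to account for beyond $x_0$ and $x_2$ — this is precisely the content of Proposition \ref{prop:sor2}, so the division case is substantially simpler than the split case (where $x_1,\dots,x_5$ all contribute), and no cancellation of divergent pieces among several orbits is needed. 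I would therefore present the argument as: (i) reduce to the $x_2$-orbit via Proposition \ref{prop:sor2} and the already-established vanishing of the $x_0$-contribution; (ii) unfold; (iii) compute the resulting Godement--Jacquet integrals with the normalized measures, extracting $c_F$ and the argument $1$; (iv) read off the claimed formula.
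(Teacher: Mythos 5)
There is a genuine gap. The core of your proposal is to unfold the $\Phi$-term and the $\widehat\Phi$-term separately against $H_{x_2}(F)\backslash H(F)$ and then read off a Godement--Jacquet integral ``at the argument $1$.'' But the resulting unfolded integral over $H_{x_2}(F)\backslash H(\A)^1$ is \emph{not} absolutely convergent: after the bijection $(g_1,g_2,w)\mapsto(g_1,g_1^{-1}wg_2)$, what appears is precisely $Z^\GJ((\Phi_K)_1,f_{\phi_1,\phi_2},1)$, and the paper itself records that $Z^\GJ(\Psi,f_{\phi_1,\phi_2},s)$ converges absolutely only for $\Re(s)>2$. Your claim that ``cuspidality makes $f_{\phi_1,\phi_2}$ rapidly decreasing so the Godement--Jacquet integral converges even at the edge point'' is false; the range $\Re(s)>2$ quoted from \cite{GJ} is already the range for cuspidal $\phi_i$, and $s=1$ lies strictly outside it. The value $Z^\GJ(\cdot,\cdot,1)$ in the statement is an analytically continued value, not the value of a convergent integral, so a termwise unfolding argument cannot produce it.

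The paper's argument is structurally different in two essential ways that your proposal omits. First, it does not unfold directly: it first rewrites the orbit sum as a sum over $\gamma\in B(F)\backslash\GL_2(F)$ and $w\in D^\times$, introduces an auxiliary parameter $s$ via a factor $e^{-sH_B(\gamma g_3)}$, and subtracts the ``zero Fourier mode'' $\int_{D_\A}\Phi((0,x)\cdot\rho((1,1,\gamma)h))\,\d x$ inside the $w$-sum, which makes the integrals \eqref{eq:20413e1}, \eqref{eq:20413e2} absolutely convergent in the strip $-1<\Re(s)<1$. Second, it keeps the $\Phi$ and $\widehat\Phi$ contributions together and invokes the exact cancellation
\[
\int_{D_\A}\Phi((0,x)\rho(h))\,\d x-\int_{D_\A}\widehat\Phi((x,0)\rho^\vee(h))\,\d x=0\qquad(h\in H(\A)^1),
\]
which is what allows the subtracted zero modes to be reinserted for free and turns $\mathscr Y_1(s)$ into $\mathscr Y_2(s)$ on the strip. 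Only then is $\mathscr Y_2(s)$ analytically continued; the identification with $c_F\{-Z^\GJ(\cdot,s+1)+Z^\GJ(\cdot,s+1)\}$ is valid in the convergent half-plane $\Re(s)>1$, and the claimed formula is the value of the continuation at $s=0$. Without the auxiliary $s$-variable, the zero-mode subtraction, and the $\Phi$/$\widehat\Phi$ cancellation, the unfolding you propose is a divergent manipulation and the lemma does not follow.
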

\begin{proof}
Let $s\in\C$. 
First, we will prove that the integrals
    \begin{multline}\label{eq:20413e1}
    \int_{G(F)\bsl G(\A)^1} \big| \phi(g_1) \, \overline{\phi(g_2)} \big| \, \sum_{\gamma \in B(F)\bsl \GL_2(F)} \left|e^{-sH_B(\gamma g_3)} \right| \\
    \left|   \sum_{w \in D^\times} \Phi((0,w)\cdot \rho((1,1,\gamma)\tilde{g}))  -  \int_{D_\A} \Phi((0,x)\cdot \rho((1,1,\gamma) \tilde{g}))\, \d x    \right| \, \d \tilde{g},
    \end{multline}
    \begin{multline}\label{eq:20413e2}
    \int_{G(F)\bsl G(\A)^1}\big| \phi(g_1) \, \overline{\phi(g_2)} \big| \sum_{\gamma \in B(F)\bsl \GL_2(F)}   \left| e^{-sH_B(\gamma g_3)}  \right| \\
    \left| \sum_{w \in D^\times} \widehat{\Phi}((w,0)\rho^\vee((1,1,\gamma)\tilde{g}))   -   \int_{D_\A} \widehat{\Phi}((x,0)\rho^\vee((1,1,\gamma)\tilde{g}))\, \d x    \right| \, \d \tilde{g}
    \end{multline}
are convergent for $-1<\Re(s)<1$.
Set $B(\A)^1\coloneqq \{ \left(\begin{smallmatrix}a&b \\ 0&c\end{smallmatrix}\right) \in B(\A) \mid |a|=|c|=1 \}$. 
By using \eqref{eq:normalization dg_3}, we obtain a Haar measure $\d q \coloneqq \d^1 a \, \d^1 c \, \d b$ on $B(\A)^1$ for $q=\left(\begin{smallmatrix}a&b \\ 0&c\end{smallmatrix}\right)$. 
Since $\d^1 g_3=\d q \d t_2 \d k $ holds for $g_3=q \, \diag(t_2^{-1/2},t_2^{1/2}) \, k$, the integral \eqref{eq:20413e1} is transformed into 
    \begin{multline*}
    \int_{G_1(F)\bsl G_1(\A)^1}\d^1 g_1 \, \int_{G_2(F)\bsl G_2(\A)^1}\d^1 g_2 \\
    \int_{B(F)\bsl B(\A)^1 }\d q \, \int_0^\inf  \frac{\d t_2}{t_2} \, \int_K \d k  \,  \big| \phi(g_1) \, \overline{\phi(g_2)} \big| \, t_2^{s+1} \\ 
    \left|\sum_{w \in D^\times} \Phi((0, t_2^{1/2} c g_1^{-1} w g_2)\cdot \rho((1,1,k)))  -  \int_{D_\A} \Phi((0,t_2^{1/2}c g_1^{-1}xg_2)\cdot \rho((1,1,k)))\, \d x    \right|.
    \end{multline*}
We set
    \[
    |\Phi|_B(x)\coloneqq \int_K \int_{B(F)\bsl B(\A)^1} |\Phi(  (0,x) \, \rho(1,1,qk)) | \,  \d q \, \d k , 
    \]
    \[
    |\Phi|_B^{(2)}(x)\coloneqq  \int_K \int_{B(F)\bsl B(\A)^1} \left| \int_{D_\A} \Phi((0,z)\, \rho(1,1,qk))\, \psi_F(\Tr(xz)) \, \d z \right|  \, \d q \, \d k .
    \]
Since $B(F)\bsl B(\A)^1K$ is compact, $|\Phi|_B$ and $|\Phi|_B^{(2)}(x)$ are bounded by certain Schwartz functions on $D_\A$.
Dividing $\int_0^\inf \frac{\d t_2}{t_2}$ into $\int_1^\inf \frac{\d t_2}{t_2}+\int_0^1 \frac{\d t_2}{t_2}$ and applying the Poisson summation formula to $\sum_{w \in D^\times}$ inside $\int_0^1 \frac{\d t_2}{t_2}$, we deduce
    \begin{multline*}
    \eqref{eq:20413e1}\ll \int_{G_2(\A)^1}\d^1 g \, \int_1^\inf\frac{\d t}{t} \, f_{|\phi|}(g)  \, t^{s+1} \,   |\Phi|_B(t^{1/2}g)  \\
    + \int_{G_2(F)\bsl G_2(\A)^1}\d^1 g \, \int_1^\inf\frac{\d t}{t} \,   f_{|\phi|}(g) \, t^{s-1} \, |\Phi|_B^{(2)}(0) \\
    +\int_{G_2(\A)^1}\d^1 g \, \int_0^1 \frac{\d t}{t} \,  f_{|\phi|}(g)  \, t^{s-1}  |\Phi|_B^{(2)}(t^{-1/2} \; {}^t\!g^{-1} ) \\
    + \int_{G_2(F)\bsl G_2(\A)^1}\d^1 g  \, \int_0^1 \frac{\d t}{t} \,   f_{|\phi|}(g) \, t^{s+1} \, |\Phi|_B(0).
    \end{multline*}
Hence, \eqref{eq:20413e1} converges for the range $-1<\Re(s)<1$.
The convergence of \eqref{eq:20413e2} can be proved similarly.
By using the convergence of \eqref{eq:20413e1} and \eqref{eq:20413e2}, the integral 
    \begin{multline*}
    \mathscr{Y}_1(s)\coloneqq \int_{G(F)\bsl G(\A)^1}  \phi(g_1) \, \overline{\phi(g_2)}  \, \sum_{\gamma \in B(F)\bsl \GL_2(F)} e^{-sH_B(\gamma g_3)} \\
    \left\{  -\sum_{w \in D^\times} \Phi((0,w)\cdot \rho((1,1,\gamma)h))  + \sum_{w \in D^\times} \widehat{\Phi}((w,0)\rho^\vee((1,1,\gamma)h))   \right\} \, \d \tilde{g} 
    \end{multline*}
is absolutely convergent in the range $-1<\Re(s)<1$.
Note that we have used the cancellation
    \[
    \int_{D_\A} \Phi((0,x)\cdot \rho(\tilde{g}))\, \d x- \int_{D_\A} \widehat{\Phi}((x,0)\cdot \rho^\vee(\tilde{g}))\, \d x =0    \qquad (\tilde{g}\in G(\A)^1).
    \]
Furthermore, for $-1<\Re(s)<1$ we get $\mathscr{Y}_1(s)=\mathscr{Y}_2(s)$ where
    \begin{multline*}
    \mathscr{Y}_2(s)\coloneqq c_F^2\int_0^\inf t^{s+1} \,  \int_{ G_2(F)\bsl G_2(\A)^1} \int_{ G_1(F)\bsl G_1(\A)^1} \phi(g_1) \, \overline{\phi(g_2)} \\
    \left\{  - \sum_{w\in D^\times} \Phi_K((0,t^{1/2}g_1^{-1}w g_2)) +   \sum_{w\in D^\times} \widehat\Phi_K((t^{1/2}g_2^{-1}w g_1,0)) \right\} \, \d g_1 \, \d g_2 \, \frac{\d t}{t}.
    \end{multline*}
Note that we can change the order of $\int_{G(F)\bsl G(\A)^1}$ and $\sum_{\gamma \in B(F)\bsl \GL_2(F)}$ for $-1<\Re(s)<1$ since \eqref{eq:20413e1} and \eqref{eq:20413e2} converge. 
By the Poisson summation formula and the same argument as above, we can prove that $\mathscr{Y}_2(s)$ is analytically continued to the whole $s$-plane.
In addition, we obtain
    \[
    \mathscr{Y}_2(s)=c_F^3\{-Z^\GJ((\Phi_K)_1,f_{\phi},s+1)+Z^\GJ((\widehat{\Phi}_K)_1,f_{\phi}^\vee,s+1)\}
    \]
in the range $\Re(s)>1$. 
Since the Godement-Jacquet integral is analytically continued to the whole $s$-plane, the contribution of $x_2\cdot H(F)$ to $I^1(\Phi,\phi)$ equals 
    \[
    c_F^{-2}\mathscr{Y}_1(0)=c_F^{-2}\mathscr{Y}_2(0)=c_F \{ -Z^\GJ((\Phi_K)_1,f_{\phi},1)+Z^\GJ((\widehat{\Phi}_K)_1,f_{\phi}^\vee,1)\}. 
    \]
Thus, this completes the proof.
\end{proof}
For the case that $D$ is division, we obtain Theorem \ref{thm:global} by Proposition \ref{prop:sor2} and Lemma \ref{lem:absdiv}.

\subsubsection{The case $D=\M_2(F)$}

Assume that $D=\M_2(F)$.
\begin{lem}\label{lem:conv0816}
The integral
    \[
    \int_{B(F)\bsl \GL_2(\A)^1} |\phi(g)| \, e^{(1+\epsilon)\, H_B(g)} \, \d^1 g
    \]
is convergent for any $\epsilon>0$, where $\GL_2(\A)^1\coloneqq\{g\in\GL_2(\A)\mid |\det(g)|=1\}$.
\end{lem}
\begin{proof}

Recall the measure $\d q$ on $B(\A)^1$, see the proof of Lemma \ref{lem:absdiv}, and we have $\d^1 g=\d q t^{-2}\d t \d k $ for $g=q \, \diag(t^{1/2},t^{-1/2}) \, k$, 
Hence, the integral equals
    \[
    \int_{B(F)\bsl B(\A)^1} \int_0^\infty \int_K |\phi(q \, \diag(t^{1/2},t^{-1/2}) \, k)| \, t^{-1+\epsilon} \, \d k \, \d t \, \d q .
    \]
Since $\phi$ is cuspidal, it is known that $\phi$ is rapidly decreasing on a Siegel set 
    \[
    \{ pak   \mid p\in \omega, \;\; a=\diag(e^t,e^{-t}), \;\; k\in K, \;\; t>T_0   \},
    \]
for any $T_0\in\R$ and any compact set $\omega$ in $B(\A)$. 
Hence, we obtain the convergence of the above integral, because $B(F)\bsl B(\A)^1$ is compact. 
\end{proof}
\begin{lem}\label{lem:contx2}
The contribution of $x_1\cdot H(F)\sqcup x_2\cdot H(F)$ to $I^1(\Phi,\phi)$ is
    \[
    c_F \{ -Z^\GJ((\Phi_K)_1,f_{\phi},1)+Z^\GJ((\widehat{\Phi}_K)_1,f_{\phi}^\vee,1) \}.
    \]
\end{lem}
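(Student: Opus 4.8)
The plan is to follow the scheme of Lemma~\ref{lem:absdiv} for the division case, the only essential new ingredient being the cuspidality of $\phi_1$ and $\phi_2$ as automorphic forms on $\GL_2(\A)$. First (orbit bookkeeping) I would make the relevant orbit set explicit: writing elements of $V(F)=M_2(F)\oplus M_2(F)$ as pairs $(x,y)$, one reads off from Proposition~\ref{prop:sor1} that $x_1\cdot H(F)\sqcup x_2\cdot H(F)$ is exactly the set of nonzero pairs $(x,y)$ with $x$ and $y$ linearly dependent over $F$; equivalently it equals $\{(cw,dw)\mid w\in M_2(F)\setminus\{0\},\ (c,d)\in F^2\setminus\{0\}\}$, the rank~$2$ part of $w$ giving the $x_2$-orbit and the rank~$1$ part the $x_1$-orbit. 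Using the bijection between $B(F)\backslash\GL_2(F)$ and lines in $F^2$ (via the bottom row), this yields
\[
\sum_{x\in x_1\cdot H(F)\sqcup x_2\cdot H(F)}\Phi(x\cdot\rho(h))=\sum_{\gamma\in B(F)\backslash\GL_2(F)}\ \sum_{w\in M_2(F)\setminus\{0\}}\Phi\big((0,w)\cdot\rho((1,1,\gamma)h)\big),
\]
and similarly for $\widehat\Phi$ with $\rho^\vee$. This is formally the same expression as in Lemma~\ref{lem:absdiv}, except that $w$ ranges over $M_2(F)\setminus\{0\}$ rather than over $D^\times$.

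Second (convergence and Poisson summation), I would establish the absolute convergence, for $-1<\Re(s)<1$, of the integrals obtained by inserting the weight $e^{-sH_B(\gamma g_3)}$ and subtracting the corresponding $w$-integrals, i.e. the analogues of \eqref{eq:20413e1} and \eqref{eq:20413e2}. This is where the argument genuinely departs from the division case: since $G_2=\GL_2$ is no longer anisotropic modulo centre, the bound needed to dominate $\int_{H(F)\backslash H(\A)^1}|\phi_1(g_1)\phi_2(g_2)|(\cdots)\,\mathrm{d}^1h$ is precisely Lemma~\ref{lem:conv0816}, the rapid decay of the cusp forms on Siegel sets as reflected in the convergence of $\int_{Z(\A)B(F)\backslash\GL_2(\A)}|\phi(g)|e^{(1+\epsilon)H_B(g)}\,\mathrm{d}g$. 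With these estimates one may interchange $\int_{H(F)\backslash H(\A)^1}$ with $\sum_{\gamma\in B(F)\backslash\GL_2(F)}$ in that strip, apply the Poisson summation formula on $M_2(\A)$ in the variable $w$ (so that $\sum_{w\neq 0}$ becomes $\sum_{w\in M_2(F)}$ minus the $w=0$ term), and use the cancellation $\int_{M_2(\A)}\Phi((0,x)\cdot\rho(h))\,\mathrm{d}x=\int_{M_2(\A)}\widehat\Phi((x,0)\cdot\rho^\vee(h))\,\mathrm{d}x$ for $h\in H(\A)^1$ to define, on $-1<\Re(s)<1$, a convergent integral $\mathscr{Y}_1(s)$ exactly as in Lemma~\ref{lem:absdiv}.

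Third (unfolding to Godement--Jacquet integrals), I would pass to $\Re(s)>1$, where $\mathscr{Y}_1(s)$ equals an integral $\mathscr{Y}_2(s)$ over $\GL_2(\A)^1\times\GL_2(\A)^1$ (modulo centre and rational points) against $\phi_1\phi_2$, with an inner sum over $w\in M_2(F)\setminus\{0\}$. The one new point here is that this sum splits into the rank~$2$ part $w\in\GL_2(F)$ and the rank~$1$ part. The rank~$2$ part unfolds, as in the division case, via $(g_1,g_2,w)\mapsto(g_1,g_1^{-1}wg_2)$, producing, after the $K$-average built into the Iwasawa decomposition (which turns $\Phi$ into $\Phi_K$), the Godement--Jacquet integrals $Z^\GJ((\Phi_K)_1,f_{\phi_1,\phi_2},s+1)$ and $Z^\GJ((\widehat\Phi_K)_1,f^\vee_{\phi_1,\phi_2},s+1)$, which continue meromorphically. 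The rank~$1$ part unfolds instead against $P(F)\backslash\GL_2(F)$ for a mirabolic subgroup $P=NM$ stabilizing a line, and the remaining integral then contains an inner integral $\int_{N(F)\backslash N(\A)}\phi_j(ng)\,\mathrm{d}n$, which vanishes by cuspidality of $\phi_1$ and $\phi_2$ — the same mechanism by which the Godement--Jacquet zeta integral of a cuspidal representation is entire. Hence $\mathscr{Y}_1(s)=\mathscr{Y}_2(s)=c_F\{-Z^\GJ((\Phi_K)_1,f_{\phi_1,\phi_2},s+1)+Z^\GJ((\widehat\Phi_K)_1,f^\vee_{\phi_1,\phi_2},s+1)\}$ for all $s$.

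Finally, since the contribution of $x_1\cdot H(F)\sqcup x_2\cdot H(F)$ to $I^1(\Phi,\phi_1,\phi_2)$ is $\mathscr{Y}_1(0)=\mathscr{Y}_2(0)$, setting $s=0$ gives the stated value $c_F\{-Z^\GJ((\Phi_K)_1,f_{\phi_1,\phi_2},1)+Z^\GJ((\widehat\Phi_K)_1,f^\vee_{\phi_1,\phi_2},1)\}$. I expect the main obstacle to be the organization of the second step — the convergence estimates for the various terms produced by the Poisson summation over $M_2(\A)$, now over the noncompact quotient $\GL_2(F)\backslash\GL_2(\A)^1$ — together with the clean isolation and vanishing of the rank~$1$ term in the third step; neither is deep, but both demand some care.
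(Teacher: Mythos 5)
Your proposal is correct and follows essentially the same route as the paper: the same parametrization of $x_1\cdot H(F)\sqcup x_2\cdot H(F)$ by $B(F)\backslash\GL_2(F)$ and $w\in M_2(F)\setminus\{0\}$, the same insertion of the weight $e^{-sH_B(\gamma g_3)}$ with convergence in $-1<\Re(s)<1$ secured via Lemma~\ref{lem:conv0816} after splitting the domain by the sign of $H_B$, and the same passage to $\Re(s)>1$ where the rank-$2$ part unfolds to the Godement--Jacquet integrals and the rank-$1$ part is killed by cuspidality. The paper's treatment of the rank-$1$ piece is slightly more explicit --- it transforms it into a product of Borel-period integrals $\int_{Z(\A)B(F)\backslash G_j(\A)}\phi_j(g_j)e^{sH_B(g_j)}\,\mathrm{d}g_j$ for $j=1,2$, each vanishing --- but the mechanism you invoke, vanishing of the constant term along $N$, is exactly the same.
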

\begin{proof}
One can apply the same argument as in the proof of Lemma \ref{lem:absdiv}.
Hence, 
    \begin{multline*}
    \int_{G(F)\bsl G(\A)^1} \big| \phi(g_1) \, \overline{\phi(g_2)} \big| \, \sum_{\gamma \in B(F)\bsl \GL_2(F)}  \left| e^{-sH_B(\gamma g_3)} \right| \\
    \left|   \sum_{ \substack{ w \in \M_2(F)\\ \mathrm{rank}(w)>0} } \Phi((0,w)\cdot \rho((1,1,\gamma)\tilde{g}))  -  \int_{\M_2(\A)} \Phi((0,x)\cdot \rho((1,1,\gamma) \tilde{g}))\, \d x    \right| \, \d \tilde{g} ,
    \end{multline*}
    \begin{multline*}
    \int_{G(F)\bsl G(\A)^1}\big| \phi(g_1) \, \overline{\phi(g_2)}  \big| \sum_{\gamma \in B(F)\bsl \GL_2(F)}   \left| e^{-sH_B(\gamma g_3)}  \right| \\
    \left| \sum_{ \substack{w \in \M_2(F)\\ \mathrm{rank}(w)>0}} \widehat{\Phi}((w,0)\cdot\rho^\vee((1,1,\gamma)\tilde{g}))   -   \int_{\M_2(\A)} \widehat{\Phi}((x,0)\cdot \rho^\vee((1,1,\gamma)\tilde{g}))\, \d x    \right| \, \d \tilde{g}
    \end{multline*}
are convergent for $-1<\Re(s)<1$. 
In the proof, we have to divide the integration domain into $H_B(\gamma g_3)>0$ and $H_B(\gamma g_3)<0$ in order to use the Poisson summation formula, and take care of the convergence of the sum over
    \[
    \mathscr{A}_1\coloneqq  \{w\in \M_2(F) \mid \mathrm{rank}(w)=1\}.
    \]
We prove, as one example, the convergence of the integral for $H_B(\gamma g_3)<0$ of the sum over $\mathscr{A}_1$, which is
    \begin{multline}\label{eq:intA1}
    \int_{\substack{G(F)\bsl G(\A)^1 \\ H_B(\gamma g_3)<0} } \big| \phi(g_1) \, \overline{\phi(g_2)} \big| \, \sum_{\gamma \in B(F)\bsl \GL_2(F)}  \left| e^{-sH_B(\gamma g_3)} \right| \\
    \left|   \sum_{ w \in \mathscr{A}_1 } \Phi((0,w)\cdot \rho((1,1,\gamma)\tilde{g}))     \right| \, \d \tilde{g} .
    \end{multline}
In this case, it is unnecessary to apply the Poisson summation formula. 
Since
    \[
    \mathscr{A}_1=\bigsqcup_{ \gamma_1\in B(F)\bsl \GL_2(F)} \bigsqcup_{\gamma_2 \in B(F)\bsl\GL_2(F) } \gamma_1^{-1} \left\{ 
        \begin{pmatrix}
        0&u \\ 
        0&0
        \end{pmatrix} 
    \; \middle| u\in F^\times\right\}\gamma_2,
    \]
the integral \eqref{eq:intA1} is bounded by the integral
    \begin{multline*}
    \mathscr{A}_1(s)\coloneqq\int_{F^\times\bsl\A^1} \d^1 \alpha \, \int_1^\inf  \frac{\d t}{t} \,  \int_{B(F)\bsl \GL_2(\A)^1} \d g_1 \,  \int_{B(F)\bsl \GL_2(\A)^1} \d g_2 \\ 
    \left| \phi(g_1) \, \overline{\phi(g_2)} \right| \, t^{1+\Re(s)} e^{-H_B(g_1)}e^{-H_B(g_2)}  \\
    \sum_{u\in F^\times}\left| \Phi((0,
        \begin{pmatrix} 
        0& \alpha t^{1/2} e^{-H_B(g_1)/2}e^{-H_B(g_2)/2} u \\ 
        0&0 
        \end{pmatrix})
    \cdot \rho(k_1,k_2,k_3)) \right| 
    \end{multline*}
up to constant, where $g_j=b_j k_j\in G(\A)^1$, $b_j\in B(F)\bsl B(\A)$, $k_j\in K$. 
By the condition $t>1$, we have $\mathscr{A}_1(s)<\mathscr{A}_1(s+2)$.
Then, we see that $\mathscr{A}_1(s+2)$ is convergent by Lemma \ref{lem:conv0816} if we change the variable $t$ by $te^{H_B(g_1)}e^{H_B(g_2)}$.  
Thus, the integral \eqref{eq:intA1} also converges. 
We can prove the convergence of other terms by the same argument.

By the convergence of the above integrals, the contribution of $x_1\cdot H(F)\sqcup x_2\cdot H(F)$ to $I^1(\Phi,\phi)$ is equal to $c_F^{-2}\mathscr{Y}_3(0)$, where we set
    \begin{multline*}
    \mathscr{Y}_3(s)\coloneqq c_F\int_0^\inf  \frac{\d t}{t}  \int_{G_1(F)\bsl G_1(\A)^1}\d^1 g_1 \int_{G_2(F)\bsl G_2(\A)^1} \d^1 g_2 \, t^{s+1} \, \phi(g_1) \, \overline{\phi(g_2)} \\
    \left\{  - \sum_{\substack{w \in \M_2(F)\\ \mathrm{rank}(w)>0}} \Phi_K((0,t^{1/2}g_1^{-1}w g_2))  +  \sum_{ \substack{w \in \M_2(F)\\ \mathrm{rank}(w)>0}} \widehat\Phi_K((t^{1/2}g_2^{-1}w g_1,0)) \right\} ,
    \end{multline*}
and $\mathscr{Y}_3(s)$ is convergent in the range $-1<\Re(s)<1$.
Furthermore, we can prove that $\mathscr{Y}_3(s)$ is analytically continued to the whole $s$-plane by using the Poisson summation formula.
Therefore, by the same argument as in the proof of Lemma \ref{lem:absdiv} we find that the contribution of $x_2\cdot H(F)$ equals $c_F \{ -Z^\GJ((\Phi_K)_1,f_{\phi},1)+Z^\GJ((\widehat{\Phi}_K)_1,f_{\phi}^\vee,1) \}$.
Consider the contribution of $x_1\cdot H(F)$. 
The partial sum of $\mathrm{rank}(w)=1$ in the first term inside of the integral $\mathscr{Y}_3(s)$ can be transformed into
    \begin{multline*}
    - 2c_F \int_{B(F)\bsl \GL_2(\A)^1} \phi (g_1)e^{sH_B(g_1)} \d g_1 \,\int_{B(F)\bsl \GL_2(\A)^1} \overline{\phi (g_2)} e^{sH_B(g_2)} \d g_2 \\ 
    \int_{F^\times\bsl \A^\times} |a|^{2+2s} \sum_{x\in F^\times}\Phi_K(0,
        \begin{pmatrix} 
        0& ax \\ 
        0&0 
        \end{pmatrix}) 
    \, \d^\times a      
    \end{multline*}
for $\Re(s)>1$ by Lemma \ref{lem:conv0816}, and hence it vanishes by the cuspidality of $\phi$. 
Since the second one vanishes similarly, the contribution of $x_1\cdot H(F)$ is zero. 
This completes the proof.
\end{proof}

\begin{lem}\label{lem:contx34}
The contribution of $x_3\cdot H(F)\sqcup x_4\cdot H(F) $ to $I^1(\Phi,\phi)$ is zero.
\end{lem}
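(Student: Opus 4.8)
The plan is to unfold the orbital sums attached to $x_3\cdot H(F)$ and $x_4\cdot H(F)$ and to kill each resulting integral by the cuspidality of $\phi_1$ or of $\phi_2$. Let $N\subset\GL_2=\GL_1(D)$ be the standard (upper-triangular) maximal unipotent subgroup, and set $N_1=\{(n,1,1)\mid n\in N\}$ and $N_2=\{(1,n,1)\mid n\in N\}$, regarded as $F$-subgroups of $H$ via the projection $G\to H$ (which is injective on $N_1$ and on $N_2$). Using $nE_{11}=E_{11}$, $nE_{12}=E_{12}$, $E_{12}n=E_{12}$ and $E_{22}n=E_{22}$ for $n\in N$, one reads off
\[
x_3\cdot\rho(h_1)=x_3,\qquad x_3\cdot\rho^\vee(h_2)=x_3,\qquad x_4\cdot\rho(h_2)=x_4,\qquad x_4\cdot\rho^\vee(h_1)=x_4
\]
for all $h_1\in N_1$ and $h_2\in N_2$. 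Hence $N_1$ lies in the $\rho$-stabilizer of $x_3$ and in the $\rho^\vee$-stabilizer of $x_4$, while $N_2$ lies in the $\rho$-stabilizer of $x_4$ and the $\rho^\vee$-stabilizer of $x_3$; moreover, by the explicit description of these stabilizers (Proposition~\ref{prop:sor1}, $G_{x_3}\cong G_{x_4}\cong G_{x_2}$), each such $N_i$ is normal in the relevant stabilizer, being the unipotent radical of its Borel factor (which normalizes it) while the remaining two factors centralize it.

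The crux of the argument is to justify the unfolding, and this is where essentially all the work lies. Following the method of the proof of Lemma~\ref{lem:contx2} — partitioning $H(F)\bsl H(\A)^1$ according to the sign of $H_B$ on the relevant $\GL_2$-factor, bounding the sum over an $H(F)$-orbit by a finite sum of Schwartz functions on $V(\A)$, and invoking the rapid decay of the cusp forms $\phi_1,\phi_2$ on Siegel sets together with Lemma~\ref{lem:conv0816} — one checks that
\[
\int_{H(F)\bsl H(\A)^1}\bigl|\phi_1(g_1)\,\phi_2(g_2)\bigr|\sum_{x\in x_3\cdot H(F)\,\sqcup\,x_4\cdot H(F)}\Bigl(\bigl|\Phi(x\cdot\rho(h))\bigr|+\bigl|\widehat\Phi(x\cdot\rho^\vee(h))\bigr|\Bigr)\,\d^1h<\infty .
\]
No new phenomenon arises compared with Lemma~\ref{lem:contx2}, since both orbits are of parabolic type; the only delicate point is, as there, that the individual orbital sums need not be absolutely convergent before the domain is split, so Fubini is applied only afterwards.

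Granting this, I unfold the contribution of $x_3\cdot H(F)$. Its $\Phi\circ\rho$ part equals $\int_{H^{\rho}_{x_3}(F)\bsl H(\A)^1}\phi_1(g_1)\,\phi_2(g_2)\,\Phi(x_3\cdot\rho(h))\,\d^1h$, where $H^{\rho}_{x_3}$ denotes the $\rho$-stabilizer of $x_3$ and I have used that $\phi_1(g_1)\phi_2(g_2)$ is left $H(F)$-invariant (the $\phi_j$ have trivial central character). Since $N_1$ is normal in $H^{\rho}_{x_3}$, acts trivially on the coordinate $g_2$, and fixes $x_3$ under $\rho$, the inner integral factors through $N_1(F)\bsl N_1(\A)$ and exposes the factor $\int_{N(F)\bsl N(\A)}\phi_1(ng_1)\,\d n$, which vanishes by the cuspidality of $\phi_1$. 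The $\widehat\Phi\circ\rho^\vee$ part of this contribution is treated identically with $N_2\subset H^{\rho^\vee}_{x_3}$ and the cuspidality of $\phi_2$ (here one uses that the $\rho$- and $\rho^\vee$-orbits of $x_3$ coincide, $\rho^\vee$ being $\rho$ composed with an automorphism of $H$, so that the same orbital sum is being unfolded). The contribution of $x_4\cdot H(F)$ vanishes by the same argument with the roles of $N_1,N_2$ and of $\phi_1,\phi_2$ interchanged. Therefore the contribution of $x_3\cdot H(F)\sqcup x_4\cdot H(F)$ to $I^1(\Phi,\phi_1,\phi_2)$ is zero; as indicated, the main obstacle is the convergence/unfolding step.
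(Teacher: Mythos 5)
Your proof is correct and uses the same underlying idea as the paper: show that the orbital integrals over $x_3\cdot H(F)$ and $x_4\cdot H(F)$ converge absolutely, then unfold to expose a vanishing cuspidal unipotent period. The paper reaches the same unipotent integral through a slightly different packaging: it applies the coordinate-permutation maps $\sF_3,\sF_4$ — which carry $x_2\cdot H(F)$ onto $x_3\cdot H(F)$ and $x_4\cdot H(F)$ while permuting the three $\GL_2$-factors of $H$ — so that the parametrization by $\gamma\in B(F)\bsl\GL_2(F)$ and $w\in D^\times$ of the $x_2$-orbit from Lemma~\ref{lem:contx2} can be reused, now with the Borel sitting in a $D^\times$-slot; the convergence estimate (splitting by the sign of $H_B$ and invoking rapid decay of $\phi_j$) and the cuspidal vanishing then come essentially for free from that setup. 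Your structural reformulation via the stabilizer of $x_j$ and the normal unipotent $N_i$ inside it is equivalent and arguably makes the mechanism of vanishing more transparent, at the modest cost of re-deriving, as you sketch, the shape of those stabilizers rather than transferring it through $\sF_j$ from $G_{x_2}$; you also correctly identify the absolute-convergence step as the only genuinely delicate point, and the verifications you do supply (fixed points under $N_i$, normality, coincidence of the $\rho$- and $\rho^\vee$-orbits) all check out.
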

\begin{proof}
For $x=\left( \left(
    \begin{smallmatrix}
    x_{11}&x_{12} \\ 
    x_{21}&x_{22} 
    \end{smallmatrix}
\right),\left(
    \begin{smallmatrix}
    y_{11}&y_{12} \\ 
    y_{21}&y_{22} 
    \end{smallmatrix}
\right) \right)\in V(F)$, we set
    \[
    \sF_3(x)\coloneqq \left(
        \begin{pmatrix} 
        x_{11}&x_{12} \\ 
        y_{11}&y_{12} 
        \end{pmatrix},
        \begin{pmatrix}
        x_{21}&x_{22} \\ 
        y_{21}&y_{22} 
        \end{pmatrix}
    \right) ,\quad \sF_4(x)\coloneqq \left(
        \begin{pmatrix} 
        x_{11}&y_{11} \\ 
        x_{21}&y_{21} 
        \end{pmatrix},
        \begin{pmatrix}
        x_{12}&y_{12} \\ 
        x_{22}&y_{22} 
        \end{pmatrix} 
    \right) .
    \]
Then, we have $\sF_j(x_2\cdot H(F))=x_j\cdot H(F)$ $(j=3,4)$. 
Suppose that $\mathscr{B}(w,\gamma,\tilde{g})$ is one of the following functions:
    \[
    \Phi(\sF_3(w,0)\cdot \rho((\gamma,1,1) \tilde{g}))   , \quad   \Phi(\sF_4(0,w)\cdot \rho((1,\gamma,1)\tilde{g})) ,
    \]
    \[
    \widehat\Phi( \sF_3(w,0)\cdot \rho^\vee((1,\gamma,1)\tilde{g})) , \quad \widehat\Phi( \sF_4(0,w)\cdot \rho^\vee((\gamma,1,1)\tilde{g}))  .
    \]
Then, one can prove that
    \[
    \int_{G(F)\bsl G(\A)^1} \left| \phi(g_1) \, \overline{\phi(g_2)} \right| \, \sum_{\gamma \in B(F)\bsl \GL_2(F)} \left|  \sum_{w \in D^\times} \mathscr{B}(w,\gamma,\tilde{g})  \right| \, \d \tilde{g}
    \]
is convergent by dividing the integration domain into $H_B(\gamma g_j)>0$ and $H_B(\gamma g_j)<0$, and using the fact that $\phi_j(g_j) \, e^{r H_B(g_j)}$ is bounded on $\{ g_j\in B(F)\bsl \GL_2(\A)^1\mid H_B(g_j)>0 \}$ for any $r>0$.
Hence, the contribution of $x_3\cdot H(F)\sqcup x_4\cdot H(F) $ vanishes by the cuspidality of $\phi$.
\end{proof}

\begin{lem}\label{lem:contx5}
The contribution of $x_5\cdot H(F)$ to $I^1(\Phi,\phi)$ is zero.
\end{lem}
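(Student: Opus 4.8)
We will prove the lemma along the lines of Lemma~\ref{lem:contx34}, using the cuspidality of $\phi_1$ and $\phi_2$ together with the shape of the stabilizer $H_{x_5}$. The key point, read off from Proposition~\ref{prop:sor1}, is that $H_{x_5}$ contains the unipotent subgroup
\[
U':=\{(n,I_2,n)\in G\mid n\in N\},
\]
where $N$ is the unipotent radical of the upper triangular Borel $B$ of $\GL_2$ (this is the subgroup with $a_1=a_3=c_1=c_3=1$ and $a_2=c_2$ in the notation of Proposition~\ref{prop:sor1}): the image of $U'$ in the first $\GL_2$-factor $G_1$ is all of $N$, while its image in $G_2$ is trivial. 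Since $\rho^\vee(g_1,g_2,g_3)=\rho(g_2,g_1,{}^{t}g_3^{-1})$, the substitution $h\mapsto\sigma(h)$, where $\sigma(g_1,g_2,g_3)=(g_2,g_1,{}^{t}g_3^{-1})$ is an involution of $H$ preserving $H(F)$, $H(\A)^1$ and $\d^1h$, turns the $\widehat\Phi$-part of the contribution of $x_5\cdot H(F)$ into an expression of the same shape as the $\Phi$-part but with $\phi_1$ and $\phi_2$ exchanged. Thus it suffices to show that
\[
\int_{H(F)\bsl H(\A)^1}\phi_1(g_1)\,\phi_2(g_2)\sum_{x\in x_5\cdot H(F)}\Psi(x\cdot\rho(h))\,\d^1h
\]
vanishes for $\Psi\in\{\Phi,\widehat\Phi\}$, the $\widehat\Phi$-case then relying on the cuspidality of $\phi_2$ rather than that of $\phi_1$.

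As in Lemma~\ref{lem:contx34} (and Lemmas~\ref{lem:contx2}, \ref{lem:absdiv}), the plan is to rewrite $\sum_{x\in x_5\cdot H(F)}\Psi(x\cdot\rho(h))$ via the bijection $x_5\cdot H(F)\cong H_{x_5}(F)\bsl H(F)$, to unfold through Eisenstein-type sums over $B(F)\bsl\GL_2(F)$ in the $\GL_2$-factors of $H$ with an auxiliary parameter $e^{-sH_B(\gamma g_3)}$ inserted, and to prove absolute convergence for $-1<\Re(s)<1$ by dividing the domain of integration according to the sign of $H_B$ on the relevant $\GL_2$-factors, applying the Poisson summation formula on the appropriate side, and estimating $\phi_1,\phi_2$ by means of Lemma~\ref{lem:conv0816} and the boundedness of $\phi_j(g_j)e^{rH_B(g_j)}$ on $\{H_B(g_j)>0\}$. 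After the unfolding one is left with an integral over $H_{x_5}(F)\bsl H(\A)^1$; factoring this quotient through $U'(F)\bsl U'(\A)\cong N(F)\bsl N(\A)$, and using that $U'\subset H_{x_5}$ fixes $x_5$, leaves the $g_2$-component untouched, and preserves $H_B(g_3)$ (being unipotent upper triangular in $G_3$), the inner integral contains the factor $\int_{N(F)\bsl N(\A)}\phi_1(ng_1)\,\d n=0$. This forces the displayed integral to vanish identically in the strip for $\Psi=\Phi$, and likewise for $\Psi=\widehat\Phi$ by the symmetric argument with $\phi_2$; letting $s\to 0$ gives that the contribution of $x_5\cdot H(F)$ to $I^1(\Phi,\phi_1,\phi_2)$ is zero.

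The main obstacle is exactly this convergence bookkeeping: the careful splitting of $H(F)\bsl H(\A)^1$ into the regions where $H_B$ is positive or negative on each $\GL_2$-factor, and the attendant use of Poisson summation and the boundedness and rapid decay of cusp forms, precisely as in Lemmas~\ref{lem:contx2} and \ref{lem:contx34}; one also has to keep track of the correct placement of the parameter $e^{-sH_B}$ after unfolding. By contrast, verifying that $U'$ lies in $H_{x_5}$ with the stated images in the $\GL_2$-factors, and that $\sigma$ intertwines $\rho$ with $\rho^\vee$, is a routine matrix computation from Proposition~\ref{prop:sor1} and the definitions of $\rho$ and $\rho^\vee$.
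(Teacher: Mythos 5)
Your proposal is correct in its essential mechanism and captures exactly what drives the paper's proof: the stabilizer $H_{x_5}$ contains a one-parameter unipotent subgroup projecting isomorphically onto $N$ in $G_1$ and trivially to $G_2$, so that after unfolding the orbital sum the $g_1$-integration produces the factor $\int_{N(F)\bsl N(\A)}\phi_1(ng_1)\,\d n=0$. Where you genuinely diverge from the paper: you invoke the involution $\sigma(g_1,g_2,g_3)=(g_2,g_1,{}^tg_3^{-1})$, which satisfies $\rho^\vee=\rho\circ\sigma$, to reduce the $\widehat\Phi$-term to the $\Phi$-term with $\phi_1,\phi_2$ exchanged; the paper instead treats $\mathscr{Y}_4(s)$ and $\mathscr{Y}_5(s)$ directly through the representative sets $\mathscr{A}_2,\mathscr{A}_3$, and its unfolded expressions end up carrying \emph{both} $\int_{N(F)\bsl N(\A)}\phi_1\,\d n_1$ and $\int_{N(F)\bsl N(\A)}\phi_2\,\d n_2$ as factors, so each term vanishes without appealing to any symmetry. (You could equally well avoid $\sigma$ by observing that, in the notation of Proposition~\ref{prop:sor1}, $G_{x_5}$ also contains the unipotent with $a_1=a_3=c_1=c_3=1$, $a_2=0$, projecting trivially to $G_1$ and onto $N$ in $G_2$.) One point where your sketch does not match the actual difficulty: you model the regularization on Lemmas~\ref{lem:contx2} and~\ref{lem:contx34} and insert a single factor $e^{-sH_B(\gamma g_3)}$, but $G_{x_5}$ lies in the Borel of all three $\GL_2$-factors, so the unfolding needed here passes through $\gamma\in(B(F)\bsl\GL_2(F))^3$ with the auxiliary weight $e^{-s\sum_{j=1}^3 H_B(\gamma_j g_j)}$, after which the paper shows $\mathscr{Y}_4(s),\mathscr{Y}_5(s)$ are in fact entire. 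A single Eisenstein sum in $g_3$ would not unfold all the way down to $H_{x_5}(F)\bsl H(\A)^1$, so a careful implementation of your plan must track all three Iwasawa coordinates; the cancellation mechanism you identify is, however, the correct one, and your $\sigma$-reduction is a genuine (if unnecessary) simplification.
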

\begin{proof}
Set 
    \[
    \mathscr{A}_2\coloneqq \{  (u_1E_{12},u_2E_{11}+u_3E_{22}+cE_{12}) \mid u_1,u_2,u_3\in F^\times, \;\; c\in F \},
    \]
    \[
    \mathscr{A}_3\coloneqq \{  (u_2E_{11}+u_3E_{22}+cE_{12},u_1E_{12}) \mid u_1,u_2,u_3\in F^\times, \;\; c\in F \},
    \]
    \begin{multline*}
    \mathscr{Y}_4(s)\coloneqq -\int_{G(F)\bsl G(\A)^1}  \phi(g_1) \, \overline{\phi(g_2)} \\
    \sum_{\gamma\in (B(F)\bsl \GL_2(F))^3} e^{-s\sum_{j=1}^3H_B(\gamma_j g_j)} \sum_{w\in\mathscr{A}_2} \Phi(w\cdot \rho(\gamma \tilde{g}))  \, \d \tilde{g} ,   
    \end{multline*}
    \begin{multline*}
    \mathscr{Y}_5(s)\coloneqq \int_{G(F)\bsl G(\A)^1}  \phi(g_1) \, \overline{\phi(g_2)} \\
    \sum_{\gamma\in (B(F)\bsl \GL_2(F))^3} e^{-s\sum_{j=1}^3H_B(\gamma_j g_j)} \sum_{w\in\mathscr{A}_3} \widehat{\Phi}(w\cdot\rho^\vee(\gamma \tilde{g}))   \, \d \tilde{g} , 
    \end{multline*}
where $\gamma=(\gamma_1,\gamma_2,\gamma_3)$.
The contribution of $x_5\cdot H(F)$ equals $c_F^{-2}(\mathscr{Y}_4(0)+\mathscr{Y}_5(0))$ if they are convergent. 
The function $\mathscr{Y}_4(s)$ is transformed into
    \begin{multline*}
    \int_{B(F)\bsl B(\A)^1 }\d b_1 \int_{B(F)\bsl B(\A)^1 }\d b_2 \, \int_\bK \d k  \, \int_0^\inf \frac{\d t_1}{t_1} \, \int_0^\inf \frac{\d t_2}{t_2} \, \int_0^\inf \frac{\d t_3}{t_3} \\
    \int_\A \d c    \, \int_{F^\times\bsl \A^1} \d^1 a_1  \, \int_{F^\times\bsl \A^1} \d^1 a_2    \,  (t_1t_2t_3)^{-1-2s} \, \phi(b_1\mathfrak{t}_1k_1) \, \overline{\phi(b_2\mathfrak{t}_2k_2)}\, \\
    \sum_{u_1,u_2,u_3\in F^\times} \Phi(( t_1^{-1} t_2^{-1}t_3 u_1 E_{12},  t_1^{-1}t_2t_3^{-1}u_2E_{11} +  t_1t_2^{-1}t_3^{-1}u_3E_{22}+cE_{12}) \\
    \cdot \rho((\mathfrak{t}_1^{-1} b_1\mathfrak{t}_1,\mathfrak{t}_2^{-1} b_2\mathfrak{t}_2,\mathfrak{a}_3)k) 
    \end{multline*}
up to constant, where 
    \begin{gather*}
    k=(k_1,k_2,k_3)\in \bK=K\times K\times K,  \quad \mathfrak{t}_1=\diag(t_1^{1/2},t_1^{-1/2}), \\
    \mathfrak{t}_2=\diag(t_2^{1/2},t_2^{-1/2}), \quad \mathfrak{a}_3=\diag(a_1,a_2). 
    \end{gather*}
Hence, one can prove that $\mathscr{Y}_4(s)$ is absolutely convergent for any $s\in\C$ and entire on $\C$ by using Lemma \ref{lem:conv0816} and dividing the integration domain into $t_1t_2t_3<1$ and $t_1t_2t_3>1$.
Since $\mathscr{Y}_5(s)$ also satisfies a similar transformation, it follows that $\mathscr{Y}_5(s)$ is absolutely convergent for any $s\in\C$ and entire on $\C$.
Furthermore, for $\Re(s)>0$, $ \mathscr{Y}_4(s)$ is equal to
    \begin{multline*}
    \int_{ \A} \d c  \,  \int_{\A^\times}\d^\times a_1  \,  \int_{\A^\times}\d^\times a_2  \,  \int_{\A^\times}\d^\times a_3 \, \int_{N(F)\bsl N(\A)}\d n_1 \, \int_{N(F)\bsl N(\A)}\d n_2 \, \int_\bK \d k \\ 
    |a_1a_2a_3|^{2s+1} \phi(n_1 \diag(a_1^{-1},1)k_1) \, \overline{\phi(n_2\diag(a_1^{-1}a_3^{-1}a_2,1)k_2)} \\ 
    \Phi((a_1E_{12} ,a_2E_{11}+a_3E_{22}+cE_{12} ) \cdot \rho(k))  
    \end{multline*}
up to constant, where $N$ denotes the unipotent radical of $B$, $k=(k_1,k_2,k_3)\in\bK$. 
In addition, $\mathscr{Y}_5(s)$ also satisfies a similar equality.
Hence, $\mathscr{Y}_4(s)$ and $\mathscr{Y}_5(s)$ are identically zero by the cuspidality of $\phi$.
\end{proof}

By Proposition \ref{prop:sor1} and Lemmas \ref{lem:contx2}, \ref{lem:contx34}, and \ref{lem:contx5}, we obtain Theorem \ref{thm:global} for the case $D=\M_2(F)$.

\section{Explicit formula for global zeta functions}\label{sec:explicit}

Keep the notation of \S\ref{sec:notation}.
Recall that $F$ is a number field.  
In this section, Haar measures are the Tamagawa measures. 
Note that the normalization of measures is the same as in \S\ref{sec:1}, different from that in \S\ref{sec:globalzeta424}.

\subsection{On Saito's explicit formula}
In \cites{Saito1,Saito2}, Saito obtained an explicit formula for general prehomogeneous zeta functions after the joint work \cite{IS} together with Ibukiyama.
His method is based on the idea of endoscopy theory of automorphic representations.
In this section, we derive an explicit formula for our zeta integral $Z(\Phi,\phi,s)$ by applying Saito's method.

\subsubsection{Motivation for an explicit formula}
In \cite{SW}, we use the explicit formula for our zeta function to obtain a mean value theorem for the central value of the $L$-function. 
Especially, in the course of the study in \cite{SW}, we need the non-negativity of the coefficients of our zeta function, which is proved only after we have obtained an explicit formula. 
See \cite{SW}*{Lemma 17}.
Note that for the ordinary perhomogeneous zeta functions, the non-negativity of the coefficients is rather obvious from its construction, but this is not the case for our zeta function.

\subsubsection{Reason for using Saito's formula}
Before \cites{Saito1,Saito2}, several explicit formulas for prehomogeneous zeta functions were studied in various ways.
Saito obtained a general and uniform explicit formula.
In addition to that, Saito's formula has the following advantages (see also \cite{HW}*{p.7}):
\begin{itemize}
\item The relation between adelic orbits and rational orbits becomes clear.
\item The relation between the global measure on the adelic group and the local measures on the vector space becomes clear.
\item Absolute convergence of the zeta integral follows in general from Saito's formula \cite{Saito2} (see also Lemma \ref{lem:absconv}). 
\end{itemize}

\subsection{Orbit decomposition}

Let $D$ be a quaternoin algebra over the number field $F$. 
Recall the notations $X(D)$, $\delta_E$, and so on in \S\ref{sec:quaternoin}. 
Suppose $E\in X(D)$.
For a place $v\in\Sigma$, $D_v=D\otimes_FF_v$ is a quaternion algebra over $F_v$ and $E_v=E\otimes_FF_v$ is an $F_v$-subalgebra of $D_v$.
Unless otherwise mentioned, we assume $\delta_{E_v}=\delta_E$ and $d_{E_v}=d_E$ under the natural embeddings $ E\hookrightarrow E_v$ and $D\hookrightarrow D_v$.


For each $E\in X(D)$, we set
    \[
    \Pi_{x_E}\coloneqq H_{x_E}^0\bs H_{x_E} ,
    \]
and by Proposition \ref{prop:orbits} we obtain
\begin{equation}\label{eq:s2}
 \# \Pi_{x_E}(F)=2, \quad  H_{x_E}^0\cong (R_{E/F}(\mathbb{G}_m)/\mathbb{G}_m)\times (R_{E/F}(\mathbb{G}_m)/\mathbb{G}_m),
\end{equation}
and
\begin{equation}\label{eq:s1}
V^0(F)/H(F)=\bigsqcup_{E\in X(D)} x_E\cdot H(F).
\end{equation}
We note on $R_{E/F}(\mathbb{G}_m)/\mathbb{G}_m\cong R_{E/F}^{(1)}(\mathbb{G}_m)$. 

In what follows, the data necessary for the Saito's explicit formula \cite{Saito1} are explained in the same way as for \cite{HW}*{Section 4.2}. 
Fix an element $E_0\in X(D)$. 
For a regular element $x_0\coloneqq x_{E_0}\in V^0(F)$, the map $\varphi_{x_0}$ given in \cite{Saito1}*{p.590} is the composition of the natural mappings as:
    \[
    V^0(F)\longrightarrow V^0(F)/H(F) \longrightarrow H^1(F,H_{x_0})\longrightarrow H^1(F,\Pi_{x_0}).
    \]
Here, the second map is induced from the exact sequence $1\to H_{x_0}\to H \to V^0 \to 1$ and the third one is induced from the exact sequence $1\to H_{x_0}^0\to H_{x_0} \to \Pi_{x_0} \to 1$. 
For each $E\in X(D)$ we set $F'\coloneqq F(\sqrt{d_E/d_{E_0}})$, and have
    \[
    F'(\delta_{E_0})=F'(\delta_E) \quad \text{in $D\otimes F'$},
    \]
Hence, by the Skolem-Noether theorem, for every $E\in X(D)$, there exists an element $g_{E}\in G_1(F(\sqrt{d_E/d_{E_0}}))$ $(G_1=\PGL_1(D))$ such that
    \[
    \sqrt{d_E/d_{E_0}}\, g_{E}^{-1}\delta_{E_0} g_{E}= \delta_E ,\quad \det(g_E)=\sqrt{d_E/d_{E_0}}.
    \]
For each $E\in X(D)$, a $1$-cocycle $z_E$ is defined by 
    \[
    z_E(\sigma)\coloneq h_E^{-1}\, (h_E)^\sigma \qquad (\sigma\in \mathrm{Gal}(\overline{F}/F)),
    \]
where $h_E\coloneqq (g_E,g_E,\diag(1,\sqrt{d_E/d_{E_0}}))\in H$.

\begin{lem}\label{lem:2}
The map $X(D)\ni E\mapsto [z_E] \in H^1(F,\Pi_{x_0})$ is injective, where $[z_E]$ denotes the class of $z_E$ in $H^1(F,\Pi_{x_0})$.  
\end{lem}
\begin{proof}
If $[z_E]$ is trivial, then we get $E=E_0$ by checking the third factor of $h_E$. 
\end{proof}

For $E\in X(D)$, we write the $H(F)$-orbit of $x_E$ as
    \[
    V_E(F)\coloneqq x_E\cdot H(F)= \{ x\in V(F) \mid  P(x)\in d_E(F^\times)^2 \}. 
    \]
\begin{lem}
For any $x\in V^0(F)$, we have $x\in V_E(F)$ if and only if $\varphi_{x_0}(x)=\varphi_{x_0}(x_E)$. 
\end{lem}
\begin{proof}
This assertion follows from Lemma \ref{lem:2}.     
\end{proof}
By this lemma, $V_E(F)$ agrees with the subset $X^\mathrm{ss}(F,\tilde{y})$ $(y=x_E)$ given in the paper \cite{Saito1}. 
In particular, the key point in using Saito's formula \cite{Saito1} is that the invariants $\tau(H_x^0)$, $A(H_x^0)$, $\mathrm{ker}^1(H_x^0)$ of $H_x^0$, which we explain later, depends only on the $H(F)$-orbit of $x$.

For a connected algebraic group $M$ over $F$, denote by $\tau(M)$ the Tamagawa number of $M$. 
Write $A(M)$ for the torsion group of $M$ (the notation of Boronoi and Kottowitz. See also \cite{Saito1}*{p.594}). 
Write $\mathrm{ker}^1(M)$ for the kernel of the Hasse map $H^1(F,M)\to \prod_v H^1(F_v,M)$.  
When $\mathrm{ker}^1(M)=1$, we say that $M$ satisfies the Hasse principle.
In that case, the Hasse map becomes injective. 
\begin{lem}\label{lem:3}
Let $E\in X(D)$. 
\begin{itemize}
\item Suppose $E\not\cong F\times F$. In this case, we have $\tau(R_{E/F}^{(1)}\mathbb{G}_m)=2$, hence $\tau(H^0_{x_E})=4$. 
By a direct calculation, we have $A(H_{x_E}^0)\cong \Z/2\Z\oplus \Z/2\Z$. 
By the relation $\tau(H_{x_E}^0)\, \mathrm{ker}^1(H_{x_E}^0)=A(H_{x_E}^0)$, we obtain $\mathrm{ker}^1(H_{x_E}^0)=1$. 
\item Suppose $E\cong F\times F$. Since $H_{x_E}^0\cong \mathbb{G}_m\times \mathbb{G}_m$, we see $\tau(H_{x_E}^0)=1$, $A(H_{x_E}^0)=1$, and $\mathrm{ker}^1(H_{x_E}^0)=1$.
\end{itemize}
\end{lem}
\begin{proof}
See \cite{Saito1}*{p.594} and \cite{HW}*{Section 4.2} for the details of computations.
\end{proof}

The following is a well-known fact.
For the sake of the readers, we will write the proof based on the book \cite{PR}.
\begin{lem}\label{lem:4}
The Hasse principle for $H$ holds.  
\end{lem}
\begin{proof} 
It is known that $H^1(F,G)=1$, see \cite{PR}*{Lemma 2.8 in p.80}. 
From the exact sequence $1\to Z_\rho\to G\to H\to 1$ we deduce
    \[
    H^1(F,G)=1 \longrightarrow H^1(F,H) \longrightarrow H^2(F,Z_\rho).
    \]
Hence we obtain an injection $f\colon H^1(F,H) \to H^2(F,Z_\rho)$. 
Furthermore, the commutativity of the diagram
    \[
    \begin{CD}
     1 @>>>  H^1(F,H)  @>f>>  H^2(F,Z_\rho)  \\
    @.     @VV{\tilde{f}}V  @VV{g}V   \\
     1 @>>>  \prod_v H^1(F_v,H) @>>>  \prod_v H^2(F_v,Z_\rho) 
     \end{CD}
     \]
shows that $\tilde{f}$ is also injective, since $g$ is injective, see \cite{PR}*{Theorem 1.12 in p.38}. 
\end{proof}

\begin{lem}\label{lem:5}
Let $E\in X(D)$ and set $Y_{x_E}\coloneqq H_{x_E}^0\bsl H$. 
Consider the natural projection
    \[
    \eta_{x_E} \colon Y_{x_E}(F)/ H(F) \ni H_{x_E}^0 h H(F) \mapsto H_{x_E}^0 h H(\A) \in Y_{x_E}(F)H(\A)/ H(\A).
    \]
Then, we have $|\eta_{x_E}^{-1}(\eta_{x_E}(H_{x_E}^0 h H(F)))|=\mathrm{ker}^1(H_{x_E}^0)=1$ for arbitrary $h\in Y_{x_E}(F)$. 
\end{lem}
\begin{proof}
This follows from Lemmas \ref{lem:3} and \ref{lem:4} and \cite{Saito1}*{Proposition 1.4}.
\end{proof}

\begin{lem}\label{lem:6}
For any $E\in X(D)$, we have $Y_{x_E}(F)H(\A)=Y_{x_E}(\A)$. 
\end{lem}
\begin{proof}
By a direct calculation, one can prove that the natural map $\iota_{x_E,A}^0$ from $A(H_{x_E}^0)$ to $A(H)$ is injective, see \cite{HW}*{Section 4.2}. 
Hence, we obatin the assertion from \cite{Saito1}*{Lemma 1.5}. 
\end{proof}

\subsection{Measures on algebraic groups}\label{sec:measure1}

For an algebraic group $G$ over $F$, we write $G(F_\infty)$, $G(\A)$ and $G(\A_\fin)$ by $G_\infty$, $G_\A$ and $G_{\A_\fin}$, respectively.
Let $K=\prod_v K_v$ be a maximal compact subgroup  of $\GL_2(\A)$,  where
    \[
    K_v=
        \begin{cases}
        \GL_2(\fo_v) & \text{ if $v$ is finite,} \\
        \O(2) & \text{ if $v$ is real,} \\
        \U(2) & \text{ if $v$ is complex}.
        \end{cases}
    \]
Let $\d k_v$ be the Haar measure on $K_v$ normalized so that $\vol(K_v)=1$ and define the Haar measure $\d k$ on $K$ by $\d k=\prod_v\d k_v$.

Let $Z$ denote the center of $D^\times$ and $\d z$ the Haar measure on $Z(\A)=\A^\times$ given by $\d z=\frac{\d t}{t} \, \d^1 a$, where $z=t^{\frac12}a I_2$ with $t\in\R_{>0}$ and $a\in\A^1$.
Note that $\d z=2\d^\times x$, where $\d^\times x$ is the measure on $\A^\times$ chosen in \S\ref{sec:notation}.
Let $\d g$ be the Tamagawa measure on $D_\A^\times$. 
The algebraic group $D^\times/ F^\times$ over $F$ will be denoted by $PD^\times$.
Then we have $\vol(PD^\times\bs PD^\times_\A)=c_F^{-1}$ with respect to the quotient measure $\frac{\d g}{\d z}$.
Let $\langle \cdot,\cdot\rangle$ be the inner product on $L^2(PD^\times\bs PD^\times_\A)$ given by
    \begin{equation}\label{eq:innprod}
    \langle \varphi,\varphi'\rangle=\int_{PD^\times\bs PD^\times_\A} 
    \varphi(g)\, \overline{\varphi'(g)} \frac{\d g}{\d z}, 
    \hspace{25pt} \varphi, \varphi'\in L^2(PD^\times\bs PD^\times_\A).
    \end{equation}


For a quadratic \'etale algebra $\cE_v$ over $F_v$,  we write an element $h_{\cE_v}\in\cE_v$ in the form $h_{\cE_v}=a_v+\delta_{\cE_v}b_v$ with $a_v,  b_v\in F_v$.
Define a Haar measure $\d h_{\cE_v}$ on $\cE_v^\times$ by
    \begin{equation}\label{eq:localmeah}
    \d h_{\cE_v}= c_v\, L(1,\eta_{\cE_v}) \, 
    \frac{\d a_v \, \d b_v}{|a_v^2-d_{\cE_v} b_v^2|_v},        
    \end{equation}
where $\eta_{\cE_v}$ is the quadratic character on $F^\times_v$ corresponding to $\cE_v$, $L(s, \eta_{\cE_v})$ is the local Hecke $L$-factor and $\d a_v$,  $\d b_v$ are the Haar measures on $F_v$ chosen in \S\ref{sec:notation}.
For a quadratic \'etale algebra $E$ over $F$, the Tamagawa measure $\d h_E$ on $(\A_F\otimes_FE)^\times$ is given by
    \begin{equation}\label{eq:dh_E}
    \d h_E=\frac{1}{c_F\, L(1,\eta_E)\, |\Delta_F|} \prod_v \d h_{E_v}.    
    \end{equation}
Here, $\eta_E=\otimes_{v\in\Sigma} \eta_{E_v}$ denotes the quadratic character on $\A_F^\times$ corresponding to $E$ and $L(s,\eta)$ denotes the Hecke $L$-function.

We take an $F$-rational gauge form $\omega$ on $H$. 
Then the Tamagawa measure $\d h$ on $H(\A)$ is written as
    \[
    \d h= c_F^{-1}\, |\Delta_F|^{-5}\prod_{v\in\Sigma} c_v \, \omega_v
    \]
where $\omega_v$ is the measure on $H(F_v)$ obtained from $\omega$.

We choose an $F$-basis $x_1,\dots,x_8$ on $V$ and define a gauge form
    \[
    \d X\coloneqq \d x_1\, \d x_2 \, \cdots \d x_8.
    \]
A local Tamagawa measure $\d x_v$ on $V(F_v)$ is obtained from $\d X$ and the Haar measure on $F_v$.
Then, we have the Tamagawa measure $\d x$ on $V(\A)$ as
    \[
    \d x=|\Delta_F|^{-4} \prod_v \d x_v.
    \]

Take an element $E\in X(D)$. 
Let us define a measure concerned with the algebraic variety $Y_{x_E}\coloneqq H_{x_E}^0 \bsl H$. 
A map $\mu_{x_E}$ is defined by
    \[
    \mu_{x_E}:  \, Y_{x_E}\ni H_{x_E}^0 g \mapsto H_{x_E} g   \in H_{x_E} \bsl H \cong V^0.
    \]
Set $\d Y=\mu_{x_E}^* \d X$, and we have
    \[
    \omega_E=(\mu_{x_E}^*P)(Y)^{-2} \d Y
    \]
is an $H$-invariant gauge  on $Y_{x_E}$.
Let $\xi=\omega/\lambda_{x_E}^*\omega_E$ denote the gauge form on $H_{x_E}^0$ determined by $\omega$ and $\lambda_{x_E}^*\omega_\delta$, where $\lambda_{x_E} :H\to Y_{x_E}$, $\lambda_{x_E}(g)=H_{x_E}^0g$.
Recall that $\eta_E=\otimes_v \eta_{E_v}$ is the Hecke character on $F^\times\R_{>0}\bsl \A^\times$ corresponding to $E$, $L(s,\eta_E)$ denote the Hecke $L$-function, and we set $\mathfrak{d}_v:=  (1-\eta_{E_v}(\varpi_v) q_v^{-1})^{-2}$ if $v<\inf$ and $\eta_{E_v}$ is unramified, and $\mathfrak{d}_v:= 1$ otherwise.
Then, we obtain the Tamagawa measure
    \[
    L(1,\eta_E)^{-2}|\Delta_F|^{-1}\prod_{v\in\Sigma} \mathfrak{d}_v \, \xi_v
    \]
on $H_{x_E}^0(\A)$.
Note that $L(s, \eta)$ has a simple pole at $s=1$ when $E=F\oplus F$.
By abuse of notation, we let
    \[
    L(1,\eta_E):=c_F \, (=\mathrm{Res}_{s=1}L(s,\eta_E))
    \]
in this case.
We take the measure $\d y$ on $Y_{x_E}(\A)$ as
    \begin{equation}\label{eq:measuredy}
    \d y\coloneqq c_F^{-1}\, L(1,\eta_E)^2\, |\Delta_F|^{-4}\prod_{v\in\Sigma}  c_v \mathfrak{d}_v^{-1} \omega_{E,v}.    
    \end{equation}

For $\cE_v\in X(D_v)$, we set
    \[
    V_{\cE_v}(F_v)\coloneqq x_{\cE_v}\cdot H(F_v)= \{ x_v\in V(F_v) \mid  P(x_v)\in d_{\cE_v}(F_v^\times)^2 \}.
    \]
It follows from \cite{Saito1}*{Lemma 1.1} that the map $\lambda_{x_E}\colon Y_{x_{E}}(F_v)\to V_E(F_v)$ is surjective, hence the cardinality of each fiber in $V_E(F_v)$ equals $|\Pi_{x_E}(F_v)|=2$.  
Set $\d y_v\coloneqq \omega_{E,v}$. 
For $\Phi\in\cS(V(F_v))$, we have 
    \begin{equation}\label{eq:mu}
    \int_{Y_{x_{E}}(F_v)} \Phi(\mu_{x_E}(y_v)) \, \d y_v = 2\int_{V_{E}(F_v)} \Phi(x_v) \, \d x_v. 
    \end{equation}

\subsection{Local zeta functions}

For $\cE_v\in X(D_v)$, $\Phi_v\in\cS(V(F_v))$ and $\phi_v\in\pi_v$, we introduce the local zeta function
    \[
    Z_{\cE_v}(\Phi_v, \phi_v, s)
    \coloneqq\frac{2\,c_v}{L(1, \eta_{\cE_v})^2} \int_{V_{\cE_v}(F_v)}
    \alpha_{\cE_v}(\pi_v(g_1)\phi_v, \pi_v(g_2)\phi_v)\, |P(x)|_v^{s-2} \, \Phi_v(x) \d x,
    \]
where $x=x_{\cE_v}\cdot \rho(g_1,g_2,g_3)$.

\begin{rem}
For the element $(\gamma,\gamma,\diag(1,-1))\in G_{x_E(F)}\setminus G_{x_E}^0(F)$ given in Proposition \ref{prop:orbits}, we have
    \[
    \alpha_{E_v}(\pi_v(\gamma g_{1})\phi_v,\pi_v(\gamma g_{2})\phi_v)= \alpha_{E_v}(\pi_v(g_{1})\phi_v,\pi_v(g_{2})\phi_v).
    \]
It follows from this that the factor $\alpha_{E_v}(\pi_v(g_{1})\phi_v,\pi_v(g_{2})\phi_v)$ depends only on the point $x_E\cdot \rho(g_{1},g_{2},g_{3})$ in $V_E(F_v)$.
\end{rem}

\begin{lem}\label{lem:loc}
Let the notation be as above. 
\begin{itemize}
\item[(1)] The local zeta function $Z_{\cE_v}(\Phi_v, \phi_v, s)$ is absolutely convergent when $\Re(s)$ is sufficiently large and meromorphically continued to the whole $s$-plane.

\item[(2)] 
When $\phi_v\neq 0$, $\pi_v$ is $\cE_v$-distinguished if and only if there exists a test function $\Phi_v\in \cS(V(F_v))$ such that $Z_{\cE_v}(\Phi_v, \phi_v, s)$ is non-trivial. 

\item[(3)] Assume that $\phi_v\neq 0$ and $\pi_v$ is $\cE_v^\times$-distinguished. 
For $s_0\in\C$ there exists a test function $\Phi_v\in \cS(V(F_v))$ supported on $V_{\cE_v}(F_v)$ such that $Z_{\cE_v}(\Phi_v, \phi_v, s)$ is entire and non-zero around $s=s_0$.

\end{itemize}
\end{lem}
\begin{proof}
(1) is proved in \cite{Li4}.

If $\pi_v$ is not $\cE_v^\times$-distinguished, then $\alpha_{\cE_v}(\pi_v(g_1)\phi_v,\pi_v(g_2)\phi_v)=0$ for all $g_1$, $g_2\in D_v^\times$, hence we have $Z_{\cE_v}(\Phi_v, \phi_v, s)\equiv 0$ for any $\Phi_v\in \cS(V(F_v))$. 

Suppose $\pi_v$ is $\cE_v^\times$-distinguished. 
Since $\pi_v$ is irreducible, there exists an element $g\in D_v^\times$ so that $\alpha_{\cE_v}(\pi_v(g)\phi_v,\pi_v(g)\phi_v)\neq 0$. 
Set $x_0\coloneqq x_{\cE_v}\cdot \rho(g,g,1)$. 
Take a function $\Phi$ whose support is contained in a neighborhood $\Omega$ of $x_0$. 
If $\Omega_0$ is sufficiently small for $x_0$ and $s_0$, then $Z_{\cE_v}(\Phi_v, \phi_v, s)$ satisfies the conditions of (3). 
This completes the proof of (2) and (3). 
\end{proof}



Let $v\in S$.
Since $\cE_v\simeq E_v$,  there exist $t\in \GL_2(F_v)$ and $a\in F_v^\times$ such that $at^{-1}\delta_{\cE_v}t=\delta_{E_v}$.
This implies $x_{E_v}=x_{\cE_v}\cdot \rho(t,t,\diag(1,a))$ and $a^2d_{\cE_v}=d_{E_v}$.
It follows that for $\phi_v,  \phi'_v\in\pi_v$ 
    \begin{align*}
    \alpha_{E_v}(\phi_v,  \phi'_v) 
    &= |a|_v^{-1}\int_{F_v^\times\bs t^{-1}\cE_v^\times t}\langle \pi_v(h_v)\phi_v,  \phi'_v \rangle_v \d h_v  \\
    & =\left|\frac{d_{\cE_v}}{d_{E_v}}\right|_v^\frac12\alpha_{\cE_v}(\pi(t)\phi_v,  \pi(t)\phi'_v).
    \end{align*}
Hence for $v\in S$, we get
    \begin{equation}\label{eq:20240311}
    Z_{E_v}(\Phi_v, \phi_v, s)=\left|\frac{d_{\cE_v}}{d_{E_v}}\right|_v^\frac12
    Z_{\cE_v}(\Phi_v, \phi_v, s).  
    \end{equation}

\subsection{Explicit form of global zeta integral}

Choose a cuspidal automorphic representation $\pi=\otimes_v \pi_v$ of $D_\A^\times$ and take an automorphic form $\phi\in \pi$. 
For $s\in\C$ and $\Phi\in\cS(V(\A))$, we defined the global zeta function $Z(\Phi,\phi,s)$ in \S\,\ref{sec:3.2} as
    \[
    Z(\Phi,\phi,s) \coloneqq \int_{H(F)\bsl H(\A)} |\omega(h)|^s \phi(g_1)\, \overline{\phi(g_2)}\sum_{x\in V^0(F)} \Phi(x\cdot \rho(h))\, \d h,
    \]
where $h=Z_\rho\, (g_1,g_2,g_3)\in H(\A)$.
Following the argument of \cite{Saito1}*{Proof of Theorem 2.1}, we express this zeta integral $Z(\Phi,\phi,s)$ as an infinite sum of Euler products.

For each $E\in X(D)$, we set
    \[
    Z_E(\Phi,\phi,s)\coloneqq\int_{H(F)\bsl H(\A)} |\omega(h)|^s \phi(g_1)\, \overline{\phi(g_2)}\sum_{x\in V_E(F)} \Phi(x\cdot \rho(h))\, \d h .
    \]
Obviously we have the decomposition into a sum of the contribution of $H(\A)$-orbits:
    \begin{equation}\label{eq:20200105e1}
    Z(\Phi,\phi,s)=\sum_{E \in X(D)}Z_E(\Phi,\phi,s).
    \end{equation}
\begin{lem}\label{lem:Saito}
    \[
    Z_E(\Phi,\phi,s)=\frac{1}{2} \int_{Y_{x_E}(\A)} \cP_E(\pi(g_1)\phi)\, \overline{\cP_E(\pi(g_2)\phi)}   \,  |P(\mu_{x_E}(y))|^s \Phi(\mu_{x_E}(y)) \, \d y ,
    \]
where $y=H_{x_E}^0 h=H_{x_E}^0(g_1,g_2,g_3)\in Y_{x_E}(\A)$.
\end{lem}
\begin{proof}
For the point $y_0=H_{x_E}^0$ in $Y_{x_E}(F)$, we have $Y_{x_E}(F)=y_0 H(F)$.  
Using $|\Pi_{x_E}(F)|=2$ and
    \[
    V_E(F)=x_E\cdot H(F)=\mu_{x_E}(y_0\cdot H(F)), 
    \]
and putting $h=Z_\rho(g_1,g_2,g_3)\in H(\A)$, we obtain 
    \begin{align*}
    Z_E(\Phi,\phi,s)\, 
    &=\frac{1}{2} \int_{H(F)\bsl H(\A)} \phi(g_1)\, \overline{\phi(g_2)} \, |\omega(h)|^s \sum_{\gamma \in H_{x_E}^0(F)\bsl H(F)} \Phi( \mu_{x_E}(y_0  \gamma h)) \, \d h \\
    &=\frac{1}{2} \int_{H_{x_E}^0(F)\bsl H(\A)} \phi(g_1)\, \overline{\phi(g_2)} \, |\omega(h)|^s  \Phi( \mu_{x_E}(y_0 h)) \, \d h \\
    &=\frac{1}{2} \int_{H_{x_E}^0(\A)\bsl H(\A)} \cP_E(\pi(g_1)\phi)\, \overline{\cP_E(\pi(g_2)\phi)} \, |\omega(\tilde{h})|^s  \Phi( \mu_{x_E}( y_0\tilde{h})) \, \d \tilde{h} \\
    &=\frac{1}{2} \int_{y_0 H(\A)} \cP_E(\pi(g_1)\phi)\, \overline{\cP_E(\pi(g_2)\phi)} \, |\omega(h)|^s  \Phi( \mu_{x_E}( y)) \, \d y.
    \end{align*}
Here, $\d \tilde{h}$ is a quotient measure on $H_{x_E}^0(\A)\bsl H(\A)$.
Note that the final equality follows from the uniqueness of the invariant measure. 
By Lemma \ref{lem:5} we obtain
    \[
    y_0H(\A)=Y_{x_E}(F)H(\A)=Y_{x_E}(\A).
    \]
This completes the proof.
\end{proof}

For $\cE_S\in X(D_S)$, we set 
    \[
    X(D,\cE_S,\pi)=\{E\in X(D,\cE_S) \mid \text{ $\pi$ is $E^\times$-distinguished.}  \}.
    \]
We define the Dirichlet series $\xi(D, \cE_S, \phi, s)$ by
    \begin{multline*}
    \xi(D, \cE_S, \phi, s) \coloneqq \frac{ 1 }{2 \, |\Delta_F|^4  \, c_F }  \frac{\zeta^S_F(2s-1) \, 
    L^S(2s-1,\pi,\mathrm{Ad}) }{\zeta^S_F(2)^3 } \\
    \times \sum_{E\in X(D,\cE_S,\pi)} \frac{\zeta^S_F(2), L^S(\frac12, \pi) \, L^S(\frac12, \pi\otimes\eta_E)}{|\Delta_F| \, L^S(1, \pi, \Ad) } \frac{\mathcal{D}_E^S(\pi,s)}
    {N(\mathfrak{f}_E^S)^{s-1} }  \prod_{v\in S} \frac{L(1,\eta_{\cE_v}) \, |d_{\cE_v}|_v^{\frac12}}{|d_{E_v}|_v^{\frac12}} .
    \end{multline*}
Here,  $N(\mathfrak{f}_E^S)=\prod_{v\notin S}N(\mathfrak{f}_{\cE_v})$ and $\mathcal{D}_E^S(\pi,s)=\prod_{v\notin S}\mathcal{D}_{E_v}(\pi_v,s)$ is the product of
    \begin{equation*}
    \mathcal{D}_{E_v}(\pi_v,s)= 
        \begin{cases} 
        1+q_v^{-2s+1}+q_v^{-2s}+q_v^{-4s+1} -2\eta_v(\varpi_v)\, q_v^{-2s}\lambda_v 
        & \text{if $\eta_v$ is unramified}, \\ 
        1+q_v^{-2s+1}
        & \text{if $\eta_v$ is ramified}.
        \end{cases}
 \end{equation*}
Then, we obtain an explicit formula of $Z(\Phi,\phi,s)$ as follows.
\begin{thm}\label{thm:20200105t1}
Suppose that $\pi$ is cuspidal if $D=\M_2(F)$, and $\pi$ is not one dimensional if $D$ is division.
Let $\phi\in\pi$ be a non-zero vector.
For sufficiently large $\Re(s)>0$, the Dirichlet series $\xi(D, \cE_S, \phi, s)$ converges absolutely and we have
    \[
    Z(\Phi,\phi,s)= \sum_{\cE_S=(\cE_v)_{v\in S} \in X(D_S)}   \prod_{v\in S} Z_{\cE_v}(\Phi_v,\phi_v,s)  \times  \xi(D, \cE_S, \phi, s).
    \]
\end{thm}
\begin{rem}
By the proof of this theorem, we see that $\pi$ is $E^\times$-distinguished if and only if there exists a test function $\Phi\in\cS(V(\A))$ such that $Z_E(\Phi,\phi,s)$ is not identically zero. 
\end{rem}
\begin{proof} 
If $\pi$ is not $E^\times$-distinguished, then $\cP_E(\pi(g)\phi)=0$ for any $g\in D_\A^\times$, hence we have $Z_E(\Phi,\phi,s)\equiv 0$ for any $\Phi\in\cS(V(\A))$ by Lemma \ref{lem:Saito}. 
The assertion is obvious in this case.

Hereafter, we assume that $\pi$ is $E^\times$-distinguished. 
From \eqref{eq:Wald}, \eqref{eq:measuredy}, \eqref{eq:mu} and Lemma \ref{lem:Saito} we obtain
    \[
    Z_E(\Phi,\phi,s)= \frac{\zeta_F(2) \, L(\frac12, \pi) \, L(\frac12, \pi\otimes\eta_E)}
    {2\, c_F\, |\Delta_F|^5\, L(1, \pi, \Ad)}  \prod_{v\in\Sigma} Z_{E_v}^\#(\Phi_v,\phi_v,s).
    \]    
Here,  $Z_{\cE_v}^\#(\Phi_v, \phi_v, s)$ is the normalized local zeta function which is obtained by replacing $\alpha_{\cE_v}(\bullet)$ in the definition of $Z_{\cE_v}(\Phi_v, \phi_v, s)$ with  $\alpha_{\cE_v}^\#(\bullet)$. 

Take a finite set $S$ of places of $F$ satisfying Condition \ref{condition} so that $\Phi_v$ is the characteristic function of $V(\fo_v)$ for any $v\not\in S$. 
Since we have $\alpha^\#_{E_v}(\phi_v,\phi_v)=1$ for $v\notin S$, we see
    \[
    Z_{E_v}^\#(\Phi_v,\phi_v,s)= \frac{2\,c_v}{L(1, \eta_{\cE_v})^2} \,  \int_{V_{E_v}(\fo_v)}
    \frac{\alpha_{\cE_v}(\pi_v(g_1)\phi_v, \pi_v(g_2)\phi_v)}{\alpha_{\cE_v}(\phi_v, \phi_v)}\, |P(x)|_v^{s-2} \,  \d x.
    \]
We explicitly compute the above integral in Theorem \ref{thm:localzeta} and obtain for $v\not\in S$
    \[
    Z_{E_v}^\#(\Phi_v,\phi_v,s)= \frac{\zeta_{F_v}(2s-1)\, L(2s-1,\pi_v,\mathrm{Ad})\, \mathcal{D}_{E_v}(\pi_v,s)}{\zeta_{F_v}(2)^3 \, N(\ff_{E_v})^{s-1}}.
    \]
The above Euler product becomes
    \begin{multline*}
    Z_E(\Phi,\phi,s)= \frac{\zeta_F(2) \, L(\frac12, \pi) \, L(\frac12, \pi\otimes\eta_E)}
    {2\, c_F\, |\Delta_F|^5\, L(1, \pi, \Ad)}  \frac{\zeta_F^S(2s-1)\, L^S(2s-1,\pi,\mathrm{Ad})\, \mathcal{D}_E^S(\pi,s)}{\zeta_F^S(2)^3 \, N(\ff_E^S)^{s-1}}  \\
    \times  \prod_{v\in S} Z_{E_v}^\#(\Phi_v,\phi_v,s).     
    \end{multline*}
From \eqref{eq:normalized} we obtain
    \begin{multline*}
    Z_E(\Phi,\phi,s)= \frac{L^S(\frac12, \pi) \, L^S(\frac12, \pi\otimes\eta_E)}
    {2\, c_F\, |\Delta_F|^5\, L^S(1, \pi, \Ad)}  \frac{\zeta_F^S(2s-1)\, L^S(2s-1,\pi,\mathrm{Ad})\, \mathcal{D}_E^S(\pi,s)}{\zeta_F^S(2)^2 \, N(\ff_E^S)^{s-1}}  \\
    \times \prod_{v\in S} L(1,\eta_{E_v}) \times  \prod_{v\in S} Z_{E_v}(\Phi_v,\phi_v,s).     
    \end{multline*}
Suppose $\cE_v\cong E_v$ for any $v\in S$.
It follows from \eqref{eq:20240311} that
    \begin{multline}\label{eq:202403111}
    Z_E(\Phi,\phi,s)= \frac{L^S(\frac12, \pi) \, L^S(\frac12, \pi\otimes\eta_E)}
    {2\, c_F\, |\Delta_F|^5\, L^S(1, \pi, \Ad)} \times \frac{\zeta_F^S(2s-1)\, L^S(2s-1,\pi,\mathrm{Ad})\, \mathcal{D}_E^S(\pi,s)}{\zeta_F^S(2)^2 \, N(\ff_E^S)^{s-1}\, \prod_{v\in S}|d_{E_v}|_v^{\frac12}}  \\
    \times \prod_{v\in S} L(1,\eta_{\cE_v}) \, |d_{\cE_v}|_v^{\frac12}  \times  \prod_{v\in S} Z_{\cE_v}(\Phi_v,\phi_v,s).     
    \end{multline}
The desired equality follows from \eqref{eq:20200105e1} and \eqref{eq:202403111}.
Note that the absolute convergence is ensured by Lemma \ref{lem:absconv}.
\end{proof}

\begin{prop}\label{prop:merozeta}
Take an element $(\cE_v)_{v\in S}\in X(D_S)$. 
Assume that $\pi_v$ is $\cE_v^\times$-distinguished for each $v\in S$. 
Then, $\xi(D, \cE_S, \phi, s)$ is meromorphically continued to the whole $s$-plane. 
Note that $\xi(D, \cE_S, \phi, s)$ does not depend on the choice of $\otimes_{v\in S}\phi_v$. 
\end{prop}
\begin{proof}
The assertion follows from Theorems \ref{thm:global} and \ref{thm:20200105t1} and Lemma \ref{lem:loc}.
\end{proof}





\subsection{Absolute convergence of global zeta integral}

\begin{lem}\label{lem:absconv}
The zeta integral $Z(\Phi,\phi,s)$ converges absolutely for sufficiently large $\Re(s)$.  
\end{lem}
\begin{proof}
Since $\phi$ is bounded on $PD^\times\bsl D_\A^\times$, we obtain
    \[
    \int_{H(F)\bsl H(\A)}\Big| |\omega(h)|^s \phi(g_1)\, \overline{\phi(g_2)}\sum_{x\in V^0(F)} \Phi(x\cdot \rho(h))\, \Big| \,  \d h \ll \eqref{eq:cov1} + \eqref{eq:cov2},
    \]
    \begin{equation}\label{eq:cov1}
    \sum_{E\in X(D), \, E\neq F\times F}  \int_{H(F)\bsl H(\A)} |\omega(h)|^{\Re(s)} \sum_{x\in V_E(F)} \Big| \, \Phi(x\cdot \rho(h))\, \Big| \,  \d h ,
    \end{equation}
    \begin{equation}\label{eq:cov2}
    \int_{H(F)\bsl H(\A)} |\omega(h)|^{\Re(s)} |\phi(g_1)|\, |\phi(g_2)| \sum_{x\in V_{F\times F}(F)} \Big| \, \Phi(x\cdot \rho(h))\, \Big| \,  \d h.
    \end{equation}
When $E\neq F\times F$, even if $\pi$ is the trivial representation, Lemma \ref{lem:Saito} is available and $\cP_E(\pi(g_1)\phi)\, \cP_E(\pi(g_2)\phi)$ equals $\tau(H_{x_E}^0)$. 
Hence, using the explicit calculation of local zeta functions in \cite{Saito0}*{Theorem 3.4 (2) $n=4$, $r=2$}, we have
    \[
    \eqref{eq:cov1} \ll \sum_{E\in X(D), \, E\neq F\times F} \frac{L(1,\eta_E)^2}{N(\ff_E)^{\Re(s)-1}}  
    \]
for $\Re(s)\ge 2$. 
Since we have $L(1,\eta_E)\ll_\varepsilon N(\ff_E)^\varepsilon$ for any $\varepsilon>0$ by \cite{LiX}, \eqref{eq:cov1} converges for $\Re(s)>2$. 
Therefore, it is sufficient to porve the convergence of \eqref{eq:cov2}.

The term of $E=F\times F$ appears only in the case $D=\M_2$. 
We may replace $x_E$ with $x_E=(I_2, \diag(1,-1))$. 
Let $T$ denote the diagonal torus in $\GL_2$.
Then we have $H_{x_E}^0\cong T\times T$. 
Furthermore, \eqref{eq:cov2} is bounded by
    \begin{multline}\label{eq:splitb}
    \int_{T(\A)\bsl \PGL_2(\A)} \d g_1\, \int_{T(\A)\bsl \PGL_2(\A)} \d g_2 \, \int_{\GL_2(\A)}\d g_3 \\
    \cP_E(|\pi(g_1)\phi|) \, \cP_E(|\pi(g_2)\phi|)\, |\Phi(x_E\cdot (g_1,g_2,g_3))| \, |\det(g_3)|^{2\Re(s)} .
    \end{multline}
Let $N$ be a positive integer, $a\in\A^\times$, $u\in\A$, and $k\in K$. 
Since $\phi$ is a cusp form, it is rapidly decreasing on $\PGL_2(F)\bsl \PGL_2(\A)$. 
Hence, we have
    \[
    \phi( 
        \begin{pmatrix}
        a&0 \\  
        0& 1  
        \end{pmatrix}
        \begin{pmatrix}
        1&u \\  
        0& 1  
        \end{pmatrix} 
    k) \ll |a |^{-N} \qquad \text{when $|a|\ge 1$}.
    \] 
Set $w\coloneqq\begin{pmatrix}0&1 \\ 1&0\end{pmatrix}\in \PGL_2(F)\cap K$
Taking the Iwasawa decomposition of $\begin{pmatrix}1&0 \\  u& 1  \end{pmatrix}$, we obtain
    \[
    \phi( 
        \begin{pmatrix}
        a&0 \\  
        0& 1  
        \end{pmatrix}
        \begin{pmatrix}
        1&u \\  
        0& 1  
        \end{pmatrix} 
    k)=\phi( 
        \begin{pmatrix}
        a^{-1}&0 \\  
        0& 1  
        \end{pmatrix}
        \begin{pmatrix}
        1&0 \\  
        u& 1  
        \end{pmatrix} 
    wk) \ll |a|^N \,  \| (1,u) \|^{2N}
\] 
when $|a| \, \|(1,u)\|^2 \le 1$. 
Here, we write $\| (1,u_v)\|_v$ for the standard norm on $F_v\times F_v$ related to the valuation, and we set $\| (1,u) \|\coloneqq \prod_v \| (1,u_v)\|_v$ for $u=(u_v)\in\A$. 
By $|\phi(g)|\ll 1$, for any $g=tnk\in \PGL_2(\A)$, $t\in T_\A$, $n\in N_\A$, $k\in K$, 
    \begin{multline*}
    \cP_E(|\pi(g)\phi|)=  (\int_{|a|\geq 1} + \int_{\|(1,u)\|^{-2}\le |a|\le 1} +\int_{|a| \, \|(1,u)\|^2 \le 1})\,  |\phi(\diag(a,1) nk)| \, \d^\times a  \\
    \ll \|(1,u)\|^2+ \int_{\|(1,u)\|^{-2}\le |a|\le 1} \d^\times a  \ll \|(1,u)\|^2+ \|(1,u)\|^2\int_{\|(1,u)\|^{-2}\le |a|\le 1} |a|\d^\times a \\
    \ll \|(1,u)\|^2  \ll \| n \|^2. 
    \end{multline*}
Here, we used $1\le |a|\, \|(1,u)\|^2$ in the bound of $\int_{\|(1,u)\|^{-2}\le |a|\le 1}\d^\times a$. 
Set $g_j=t_jn_jk_j$ for $j=1$, $2$, $3$, where $n_1$ and $n_3$ are upper triangular and $n_2$ is lower triangular.
Then \eqref{eq:splitb} is bounded by the integral
    \begin{multline}\label{eq:splitb2}
    \int_{T(\A)\bsl \PGL_2(\A)} \d g_1\, \int_{T(\A)\bsl \PGL_2(\A)} \d g_2 \, \int_{\GL_2(\A)}\d g_3 \\
    \|n_1\|^2\|n_2\|^2\, |\Phi( ( *,*   ) \cdot (k_1,k_2,k_3)| \, |\det(t_3)|^{2\Re(s)}.    
    \end{multline}
In addition, if we put $n_1=
    \begin{pmatrix} 
    1 & u_1 \\ 
    0 & 1
    \end{pmatrix}$, 
$n_2=
    \begin{pmatrix} 
    1 & 0 \\ 
    u_2 & 1
    \end{pmatrix}$, 
$t_3=\diag(a,b)$, $n_3=
    \begin{pmatrix} 
    1 & u_3 \\ 
    0 & 1
    \end{pmatrix}$, 
then the part $(*,*)$ of $\Phi$ is
    \[
    (*,*)=(
        \begin{pmatrix} 
        a-au_1u_2 & -au_1 \\ 
        au_2 & a 
        \end{pmatrix}, 
    u_3    
        \begin{pmatrix} 
        a-au_1u_2 & -au_1 \\  
        au_2 & a 
        \end{pmatrix}
    +
        \begin{pmatrix} 
        b+bu_1u_2 & bu_1 \\  
        -bu_2 & -b 
        \end{pmatrix}
    ).
    \]
By the change of variables $u_1\mapsto a^{-1}u_1$, $u_2\mapsto a^{-1}u_2$, $u_3\mapsto a^{-1}(u_3+b)$, we obtain
    \[
    (*,*)\mapsto (
        \begin{pmatrix} 
        a-a^{-1}u_1u_2 & -u_1 \\ 
        u_2 & a 
        \end{pmatrix}, 
        \begin{pmatrix} 
        2b+u_3-a^{-1}u_1u_2u_3 & -a^{-1}u_1u_3 \\ 
        a^{-1}u_2u_3 & u_3 
        \end{pmatrix}
    ).
    \]
This means that $\Phi$ is rapidly decreasing for the five variables $u_1$, $u_2$, $u_3$, $a$, $b$. 
Therefore, we find that \eqref{eq:splitb2} is convergent for sufficiently large $\Re(s)$. 
This completes the proof. 
\end{proof}

\section{Explicit calculations of local zeta functions over \texorpdfstring{$p$-adic fields}{TEXT}}\label{sec:localzeta424}

\subsection{Notation}\label{sec:notation2}

recall that $F$ is a number field.
Fix a finite place $v$ which is not dyadic.
We omit the subscript $v$, hence in particular $F=F_v$ is a finite extension of $\Q_p$.
Let $\fo$ be the integer ring of $F$, $\varpi$ a prime element. 
Let $q$ denote the order of the residue field of $F$.
Throughout this section, we suppose that the quaternion algebra $D$ is split over $F$, \emph{i.e.} $D=\M_2(F)$.
We use the notation and normalization in \S~\ref{sec:space} and \S~\ref{sec:notation}.
Note that we have $|\varpi|=q$ and $\int_\fo \d x=1$.

Take an \'etale quadratic algebra $E$ over $F$ and fix an embedding $E\hookrightarrow\M_2(F)$.
Recall that we choose an element $\delta=\delta_E\in \M_2(F)$ so that $E=F+F\delta$ and $\Tr(\delta)=0$
in \S~\ref{sec:orbits}.
Put $d=d_E = \delta^2$.
We may suppose that we have 
    \[
    d_E\in\fo^\times\sqcup\varpi\fo^\times
    \]
without loss of generality. 
Let $\eta_E$ be the quadratic character on $F^\times$ corresponding to $E$ and $\ff_E$ be its conductor.
Put $N(\ff_E)\coloneqq \#(\fo/\ff_E)$.
Then we have 
    \[
    L(s,\eta_E) = 
        \begin{cases}
        (1-\eta_E(\varpi)q^{-s})^{-1} & \text{if $\eta_E$ is unramified,} \\
        1 & \text{otherwise}. 
        \end{cases}
    \]
We write $\trep$ for the trivial representation of $F^\times$.

Let $W= \{x\in \M_2 \mid x={}^t\!x \}$ be the space of symmetric matrices.
To simplify the notation, we write an element $\left(
    \begin{smallmatrix} 
    x_1& x_{12}/2 \\ 
    x_{12}/2 & x_2 
    \end{smallmatrix}
\right)\in W(F)$ as $(x_1,x_{12},x_2)$.
For $j=1,2$, we define maps $\cF_j:V(F)\to W(F)$ by 
    \[
    \cF_1(x)\coloneqq \left(\det
        \begin{pmatrix}
        x_{11} & x_{12} \\ 
        y_{11} & y_{12} 
        \end{pmatrix},  
    \det
        \begin{pmatrix}
        x_{11} & x_{12} \\ 
        y_{21} & y_{22} 
        \end{pmatrix}
    +\det
        \begin{pmatrix} 
        x_{21} & x_{22} \\ 
        y_{11} & y_{12} 
        \end{pmatrix}    , 
    \det
        \begin{pmatrix}
        x_{21} & x_{22} \\ 
        y_{21} & y_{22} 
        \end{pmatrix} 
    \right),
    \]
    \[
    \cF_2(x)\coloneqq \left(\det
        \begin{pmatrix} 
        x_{11} & y_{11} \\ 
        x_{21} & y_{21} 
        \end{pmatrix},  
    \det
        \begin{pmatrix}
        x_{11} & y_{12} \\ 
        x_{21} & y_{22} 
        \end{pmatrix}
    +\det
        \begin{pmatrix}
        x_{12} & y_{11} \\ 
        x_{22} & y_{12} 
        \end{pmatrix}    , 
    \det
        \begin{pmatrix}
        x_{12} & y_{12} \\ 
        x_{22} & y_{22} 
        \end{pmatrix}
    \right).
    \]
for $x=\left( \left( 
    \begin{smallmatrix}
    x_{11}&x_{12} \\ 
    x_{21}&x_{22} 
    \end{smallmatrix}
\right),\left(
    \begin{smallmatrix}
    y_{11}&y_{12} \\ 
    y_{21}&y_{22} 
    \end{smallmatrix}
\right) \right)\in V(F)$.
Then one can check 
    \begin{align*}
    \cF_1(x\, \rho(g))&=\det(g_2) \, \det(g_3) \, g_1^{-1}\cF_1(x)\, {}^t\!g_1^{-1},\\ 
    \cF_2(x\, \rho(g))&=\det(g_1)^{-1} \, \det(g_3) \, {}^t\! g_2\cF_2(x)\, g_2
    \end{align*}
for $g=(g_1,g_2,g_3)\in G$ and $x\in V$.
Note that we have $P(x)=-4\det(\cF_j(x))$ for each $j$.

Let $B(F)$ be the upper triangular Borel subgroup of $\GL_2(F)$, $T(F)$ the diagonal torus and $N(F)$ the upper triangular unipotent subgroup.
For smooth characters $\chi_1, \chi_2$ of $F^\times$, we obtain a smooth character $\chi_1\otimes\chi_2$ of $T(F)$.
We regard it as a character of $B(F)$ by extending trivially on $N(F)$.
Let $\chi_1\times\chi_2$ denote the normalized parabolically induced representation $\Ind_{B(F)}^{\GL_2(F)}(\chi_1\otimes\chi_2)$ of $\GL_2(F)$.

Fix $\pi$ an irreducible unitary unramified representation of $\GL_2(F)$ with trivial central character.
This can be written uniquely as a quotient of an induced representation $\chi\times\chi^{-1}$ with an unitary unramified character $\chi$ of $F^\times$.
Let $\alpha=\chi(\varpi)\in\C^\times$ be the Satake parameter.
We use following local $L$-factors of $\pi$:
\setlength{\leftmargini}{20pt}
\begin{itemize}\setlength{\itemsep}{5pt}    
\item the standard $L$-factor $L(s, \pi)=(1-\alpha q^{-s})^{-1}(1-\alpha^{-1}q^{-s})^{-1}$,
\item the adjoint $L$-factor $L(s, \pi, \Ad)=(1-q^{-s})^{-1}(1-\alpha^2q^{-s})^{-1}(1-\alpha^{-2}q^{-s})^{-1}$. 
\end{itemize}
\setlength{\leftmargini}{30pt}

\subsection{Waldspurger's model}

We fix $\GL_2(F)$-invariant Hermitian pairing $\l\,,\r$ on $\pi$ and define the $E^\times\times E^\times$-invriant bilinear form $\al_E$ on $\pi\boxtimes\bar{\pi}$ by
    \[
    \al_E(\phi_1, \phi_2)=\int_{F^\times\bs E^\times}\l\pi(t)\phi_1, \phi_2\r\,d^\times t.
    \]
Though this integral is over a non-compact region when $E^\times$ is a split torus, it converges absolutely.
Let $\phi\in\pi$ be the $K$-spherical vector of norm $1$ and assume $\al_E(\phi, \phi)\neq0$.
For $l_1, l_2\in\Z$, we set
    \[
    \be_E(l_1) = \frac{\al_E(\pi((
        \begin{smallmatrix}
        \varpi^{-l_1} & \\
        & 1
        \end{smallmatrix}
    ))\phi,\phi)}{\al_E(\phi, \phi)}, \qquad
    \be_E(l_1, l_2) = \frac{\al_E(\pi((
        \begin{smallmatrix}
        \varpi^{-l_1} & \\
        & 1
        \end{smallmatrix}
    ))\phi, \pi((
        \begin{smallmatrix}
        1 & \\
        & \varpi^{l_2}
    \end{smallmatrix}
    ))\phi)}{\al_E(\phi, \phi)}.
    \]
Note that
    \[
    \beta_E(0,0)=1, \quad  \beta_E(l,0)=\beta_E(l), \quad \beta_E(l_1,l_2)=\overline{\beta_E(l_2,l_1)}.
    \]

Recall that $\pi$ is unitary.
From \cite{Tadic}, we have $q^{-\frac12}\leq |\alpha|\leq q^\frac12$.
Set $\lambda = q^\frac12(\alpha+\alpha^{-1})\in\R$.
We can rewrite the above $L$-factors as
    \begin{align*}
    L(s,\pi) & =(1-q^{-\frac12}\lambda q^{-s}+q^{-2s})^{-1}, \\ 
    L(s,\pi,\mathrm{Ad}) & =\{1-(q^{-1}\lambda^2-1) q^{-s}+(q^{-1}\lambda^2-1)q^{-2s}-q^{-3s} \}^{-1}. \end{align*}
It follows from \cite{KP}*{Lemmas 3.2, 3.3} that the sequence $\{ \beta_E(l, m)\}_l$ satisfies the recursion formula
    \begin{equation}\label{eq:re1}
    q\beta_E(l+2,m) - \lambda\beta_E(l+1,m) + \beta_E(l,m) = 0 
    \end{equation}
for $l,m\in\Z_{\geq0}$.
In addition
    \begin{equation}\label{eq:re3}
    \beta_E(1)=
        \begin{cases}
        \frac{\lambda-2}{q-1} & \text{if $d_E\in(\fo^\times)^2$}, \\[2pt] 
        \frac{\lambda}{q+1} & \text{if $d_E\in\fo^\times\setminus(\fo^\times)^2$}, \\[2pt] 
        \frac{\lambda-1}{q} & \text{if $d_E\in\varpi\fo^\times$.}
        \end{cases}
    \end{equation}


\begin{lem}\label{lem:20240314}
For $l_1,  l_2\in\Z_{\geq0}$, we have
    \begin{equation}\label{eq:re5}
    \be_E(l_1, l_2)=\be_E(l_1) \, \be_E(l_2).
    \end{equation}
\end{lem}
\begin{proof}
For $x\in\GL_2(F)$, two linear forms 
    \[
    \varphi\mapsto\al_E(\varphi, \phi), \hspace{10pt} \varphi\mapsto\al_E(\varphi, \pi(x)\phi)
    \]
on $\pi$ are both $E^\times$-invariant.
Since we know from \cite{G97} that $\dim\Hom_{E^\times}(\pi, \C)\leq 1$, there exists a constant $c_\phi(x)$ such that  
    \begin{equation}\label{eq0}
    \al_E(\varphi, \pi(x)\phi)=c_\phi(x)\al_E(\varphi, \phi),  \qquad \varphi\in\pi.
    \end{equation}

We compute $c_\phi(\diag(1, \varpi^{l_2}))$. 
From (\ref{eq0}), we obtain
    \[
    c_\phi(1, \diag(\varpi^{l_2}))=\ol{\beta_E(l_2)}.
    \]
Here, we used the fact that $\al_E(\phi, \phi)\in\R_{>0}$ and  the obvious equation
    \[
    \ol{\al_E(\varphi, \pi(x)\phi)}=\al_E(\pi(x)\phi, \varphi).
    \]

From \eqref{eq:re1} and \eqref{eq:re3}, we see that $\be_E(l_2)\in\R$.
Thus $c_\phi(\diag(1, \varpi^{l_2}))=\be_E(l_2)$.
Substituting $\varphi=\pi(\diag(\varpi^{-l_1}, 1))\phi$ and $x=\diag(1, \varpi^{l_2})$ in (\ref{eq0}), we obtain
    \[
    \be_E(l_1, l_2)=\be_E(l_2)\al_E(\pi(\diag(\varpi^{-l_1}, 1))\phi, \phi)\, \,  \al_E(\phi, \phi)^{-1}=\be_E(l_1) \, \be_E(l_2).  \qedhere 
    \]
\end{proof}

\subsection{Some formulas for \texorpdfstring{$\beta_E(l_1,l_2)$}{TEXT}}
We give some formulas for $\beta_E(l_1,l_2)$, which is needed in the proof of the explicit formula for the local zeta function Theorem \ref{thm:localzeta}.
We use the techniques of \cite{BFF} and \cite{KP}. 
Throughout this subsection, $x$, $y$, $z$ are variables, and suppose $j\in \Z_{\geq 0}$. 
Set
    \[
    A_j(x) = \sum_{k=0}^\inf \beta_E(j+k) \, x^k.
    \]
From \eqref{eq:re1} we have the recursion formula
    \[
    x^2\{ q A_{j+2}(x) - \lambda A_{j+1}(x) + A_j(x) \} = 0.
    \]
Substituting $x A_{j+1}(x) = A_j(x) - \beta_E(j)$ into the above recursion formula, we obtain 
    \[
    q\left( A_j(x)-\beta_E(j)-\beta_E(j+1)x \right)- x\lambda \left( A_j(x)-\beta_E(j)\right)+ x^2 A_j(x) = 0.
    \]
Hence we have
    \begin{equation}\label{eq:re6}
    A_j(x)=\frac{ \beta_E(j)+\beta_E(j+1)x-q^{-1}\lambda\cdot \beta_E(j)x  }{1-\lambda q^{-1}x+q^{-1}x^2}=\frac{ \beta_E(j)-\beta_E(j-1)\, q^{-1}x  }{1-\lambda q^{-1}x+q^{-1}x^2},
    \end{equation}
cf. \cite{KP}*{Proposition 3.4}.
Set
    \[
    B_j(x) =  \sum_{l=0}^\inf \beta_E(l+j)^2 \, x^l , \qquad 
    C_j(x) =  \sum_{l=0}^\inf \beta_E(l+j+1) \, \beta_E(l+j) \, x^l .
    \]
\begin{lem}\label{lem:relation}
    \[
    C_j(x)=\frac{q^{-1}\lambda}{1+q^{-1}x}B_j(x) + \frac{ \beta_E(j) \, \beta_E(j+1)-q^{-1}\lambda \cdot \beta_E(j)^2 }{1+q^{-1}x}.
    \]
\end{lem}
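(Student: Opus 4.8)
The plan is to obtain the identity purely from the three-term recurrence \eqref{eq:re1} together with the trivial index-shift relations among the generating series $A_j$, $B_j$, $C_j$. First I would record the scalar recurrence: combining \eqref{eq:re1} with $m=0$, \eqref{eq:re5}, and $\beta_E(0)=1$ gives $q\,\beta_E(l+2)-\lambda\,\beta_E(l+1)+\beta_E(l)=0$ for all $l\in\Z_{\geq0}$. Replacing $l$ by $l+j$, multiplying through by $\beta_E(l+j+1)$, and summing over $l\geq0$ against $x^l$ yields
\[
q\,C_{j+1}(x)-\lambda\,B_{j+1}(x)+C_j(x)=0.
\]
The rearrangement is legitimate because $\beta_E(l)$ grows at most linearly in $l$: the characteristic roots of the recurrence are $q^{-1/2}\alpha^{\pm1}$, which have absolute value $\leq1$ by the unitarity bound $q^{-1/2}\leq|\alpha|\leq q^{1/2}$, so all three series converge absolutely for $|x|$ small; alternatively one reads everything as an identity of formal power series.

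Next I would use the elementary shift identities $x\,B_{j+1}(x)=B_j(x)-\beta_E(j)^2$ and $x\,C_{j+1}(x)=C_j(x)-\beta_E(j)\,\beta_E(j+1)$, both immediate from the definitions of $B_j$ and $C_j$. Substituting these into the displayed relation and clearing the denominator $x$ gives
\[
q\bigl(C_j(x)-\beta_E(j)\,\beta_E(j+1)\bigr)-\lambda\bigl(B_j(x)-\beta_E(j)^2\bigr)+x\,C_j(x)=0,
\]
and collecting the $C_j(x)$ terms yields $(q+x)\,C_j(x)=\lambda\,B_j(x)+q\,\beta_E(j)\,\beta_E(j+1)-\lambda\,\beta_E(j)^2$. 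Dividing by $q$ and then by $1+q^{-1}x$ produces exactly the asserted formula.

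There is essentially no serious obstacle: the computation is a short generating-function manipulation in the same spirit as the derivation of \eqref{eq:re6}, and in the style of \cite{BFF} and \cite{KP}. The only points needing a word of care are (i) that the scalar recurrence really holds down to $l=0$, which is precisely where \eqref{eq:re5} and $\beta_E(0)=1$ are invoked, and (ii) the justification of termwise summation, handled by the growth bound on $\beta_E(l)$ above or bypassed by working formally. Once \lem\ref{lem:relation} is available, together with the closed form \eqref{eq:re6} for $A_j$ it gives rational-function expressions for $B_j$ and $C_j$, which are the building blocks for the explicit evaluation of the local zeta function in Theorem \ref{thm:localzeta}, where $B_j$ and $C_j$ encode the pairings $\alpha_E(\pi(\diag(\varpi^{-l_1},1))\phi,\pi(\diag(1,\varpi^{l_2}))\phi)$ over the relevant lattice of Waldspurger-model vectors.
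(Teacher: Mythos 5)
Your argument is correct and follows essentially the same route as the paper's own proof: from \eqref{eq:re1} one derives $qC_{j+1}(x)-\lambda B_{j+1}(x)+C_j(x)=0$, then substitutes the shift identities $xB_{j+1}(x)=B_j(x)-\beta_E(j)^2$ and $xC_{j+1}(x)=C_j(x)-\beta_E(j)\beta_E(j+1)$ and solves for $C_j(x)$. The remark on justifying termwise summation (via the bound on the characteristic roots $q^{-1/2}\alpha^{\pm1}$, or by working formally) is a reasonable bit of extra care that the paper leaves implicit.
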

\begin{proof}
From \eqref{eq:re1} one obtains
    \[
    x\{ q C_{j+1}(x) - \lambda B_{j+1}(x) + C_j(x) \} = 0.
    \]
By $xB_{j+1}(x)=B_j(x)-\beta_E(j)^2$ and $xC_{j+1}(x)=C_j(x)-\beta_E(j+1)\, \beta_E(j)$, we see that
    \[
    (C_j(x)-\beta_E(j+1)\, \beta_E(j))-q^{-1}\lambda(B_j(x)-\beta_E(j)^2)+q^{-1}xC_j(x)=0.
    \]
Hence we obtain the assertion.
\end{proof}

\begin{prop}\label{prop:410x}
We have
    \begin{multline*}
    B_j(x)=D(x) 
    \Big[\beta_E(j)^2 + x\{ \beta_E(j+1)^2-q^{-1}(q^{-1}\lambda^2-1)\, \beta_E(j)^2\} \\
    + q^{-1}x^2\{ \beta_E(j+1)-q^{-1}\lambda\cdot \beta_E(j)\}^2\Big],
    \end{multline*}
where
    \[
    D(x)\coloneqq \{1-(q^{-1}\lambda^2-1)\, q^{-1}x +(q^{-1}\lambda^2-1)\, q^{-2}x^2 -q^{-3}x^3 \}^{-1} .
    \]
\end{prop}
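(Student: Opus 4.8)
The plan is to turn the three-term recurrence \eqref{eq:re1} (specialised to $m=0$) into a closed equation for $B_j(x)$ with the help of Lemma~\ref{lem:relation}, in the style of \cite{BFF,KP} already used to obtain \eqref{eq:re6}. Squaring $q\,\beta_E(n+2)=\lambda\,\beta_E(n+1)-\beta_E(n)$ gives
\[
q^2\beta_E(n+2)^2=\lambda^2\beta_E(n+1)^2-2\lambda\,\beta_E(n+1)\beta_E(n)+\beta_E(n)^2\qquad(n\in\Z_{\geq 0}),
\]
and summing against $x^l$ with $n=l+j$ yields the power-series identity $q^2B_{j+2}(x)=\lambda^2B_{j+1}(x)-2\lambda\,C_j(x)+B_j(x)$. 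First I would eliminate $B_{j+1}$ and $B_{j+2}$ by means of the shift relations $xB_{j+1}(x)=B_j(x)-\beta_E(j)^2$, $x^2B_{j+1}(x)=xB_j(x)-x\beta_E(j)^2$ and $x^2B_{j+2}(x)=B_j(x)-\beta_E(j)^2-x\beta_E(j+1)^2$: multiplying the identity by $x^2$ and substituting leaves an equation involving only $B_j(x)$ and $C_j(x)$.

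Next I would insert the expression for $C_j(x)$ given by Lemma~\ref{lem:relation}, clear the denominator $1+q^{-1}x$, and collect the terms proportional to $B_j(x)$. This produces $\Delta(x)\,B_j(x)=N_j(x)$ with $\Delta(x)$ a cubic independent of $j$ and $N_j(x)$ an explicit polynomial of degree $\leq 2$ whose coefficients are quadratic in $\beta_E(j)$ and $\beta_E(j+1)$. A short computation shows that the factor $1+q^{-1}x$ drops out and that $\Delta(x)=q^2\bigl\{1-(q^{-1}\lambda^2-1)q^{-1}x+(q^{-1}\lambda^2-1)q^{-2}x^2-q^{-3}x^3\bigr\}=q^2D(x)^{-1}$. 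This denominator is exactly what should appear: since $\beta_E$ obeys a second-order recurrence whose characteristic roots are $q^{-1/2}\alpha^{\pm1}$, the squares $\beta_E(n)^2$ lie in the span of the geometric sequences with ratios $q^{-1}\alpha^{2}$, $q^{-1}$, $q^{-1}\alpha^{-2}$, and $(1-q^{-1}\alpha^2x)(1-q^{-1}x)(1-q^{-1}\alpha^{-2}x)=D(x)^{-1}$ (equivalently, $D(x)$ is the adjoint $L$-factor $L(s,\pi,\mathrm{Ad})$ with $q^{-s}$ replaced by $q^{-1}x$). Dividing by $\Delta(x)$ gives $B_j(x)=q^{-2}D(x)\,N_j(x)$; expanding $N_j(x)$, the constant and linear coefficients are read off directly, while the coefficient of $x^2$ equals $q^{-1}\bigl(\beta_E(j+1)-q^{-1}\lambda\,\beta_E(j)\bigr)^2$ after completing the square, which is the claimed shape. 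Alternatively one may simply match the first three Taylor coefficients of $B_j(x)D(x)^{-1}$ against $N_j(x)$, using \eqref{eq:re1} once more to rewrite $\beta_E(j+2)$ in terms of $\beta_E(j+1)$ and $\beta_E(j)$.

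The computation is routine and I do not expect a genuine obstacle; the one delicate point is checking that, after clearing $1+q^{-1}x$, the coefficient of $B_j(x)$ is precisely $q^2D(x)^{-1}$ with no residual factor and no spurious pole at $x=-q$ — this is where the exact shape of $D(x)$, i.e.\ the symmetric-square structure of the Hecke polynomial, has to come out right. The degenerate case $\lambda^2=4q$ (coinciding characteristic roots) needs no separate treatment, since the derivation through Lemma~\ref{lem:relation} never uses that the roots are distinct and the identity $\Delta(x)=q^2D(x)^{-1}$ is a polynomial identity in $\lambda$.
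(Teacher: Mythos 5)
Your proposal follows essentially the same route as the paper: square the three-term recurrence from \eqref{eq:re1}, sum against powers of $x$ to get $q^2B_{j+2}-\lambda^2B_{j+1}+2\lambda C_j-B_j=0$, reduce $B_{j+1},B_{j+2}$ via the shift relations, substitute Lemma~\ref{lem:relation} for $C_j$, and clear the $1+q^{-1}x$ denominator to isolate $B_j(x)$. The only cosmetic difference is the overall normalization (the paper multiplies by $q^{-2}x^2$ rather than $x^2$, so its coefficient of $B_j$ comes out as $D(x)^{-1}$ rather than $q^2D(x)^{-1}$), and your remarks about the adjoint-$L$-factor structure of $D(x)$ and the degenerate case $\lambda^2=4q$ are correct but not needed for the argument.
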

\begin{proof}
From \eqref{eq:re1} one obtains
    \[
    q^2B_{j+2}(x)-\lambda^2B_{j+1}(x)+2\lambda C_j(x)-B_j(x)=0.
    \]
By Lemma \ref{lem:relation} and $xB_{j+1}(x)=B_j(x)-\beta_E(j)^2$, we have
    \begin{multline*}
    (B_j(x)-\beta_E(j)^2-x\beta_E(j+1)^2)-q^{-2}\lambda^2x(B_j(x)-\beta_E(j)^2)-q^{-2}x^2B_j(x) \\
    +2q^{-2}x^2\lambda \left\{ \frac{q^{-1}\lambda}{1+q^{-1}x}B_j(x) + \frac{ \beta_E(j) \, \beta_E(j+1)-q^{-1}\lambda \cdot \beta_E(j)^2 }{1+q^{-1}x}  \right\}=0.
    \end{multline*}
Hence we have
    \begin{align*}
    & (1-(q^{-1}\lambda^2-1)\, q^{-1}x +(q^{-1}\lambda^2-1)\, q^{-2}x^2 -q^{-3}x^3)B_j(x)  \\
    & \qquad = (\beta_E(j)^2+x\beta_E(j+1)^2   -q^{-2}\lambda^2x\beta_E(j)^2)(1+q^{-1}x) \\
    & \hspace{100pt} - 2q^{-2}x^2\lambda(\beta_E(j) \, \beta_E(j+1)-q^{-1}\lambda \cdot \beta_E(j)^2). 
    \end{align*}
This proves the proposition.
\end{proof}
Set
    \[
    U_j(x,y) = \sum_{l_1=0}^\inf \sum_{l_2=j}^\inf \beta_E(l_1+l_2+1) \, \beta_E(l_2) \, x^{l_1} y^{l_2} .
    \]
\begin{prop}\label{prop:412x}
We have
    \[
    U_j(x,y) = y^j \cdot \frac{ (q^{-1}\lambda-q^{-1}x- q^{-2}xy)B_j(y) + \beta_E(j)\beta_E(j+1)-q^{-1}\lambda \cdot \beta_E(j)^2}{(1-\lambda q^{-1}x+q^{-1}x^2)(1+q^{-1}y)}.
    \]
\end{prop}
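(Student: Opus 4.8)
The plan is to evaluate $U_j(x,y)$ by performing the double summation one variable at a time and identifying each partial sum with a generating function already computed above. All the series occurring below converge absolutely for $|x|$ and $|y|$ sufficiently small: by \eqref{eq:re1} the sequence $\beta_E(l)$ has at most polynomial growth, since the roots $q^{-1/2}\alpha^{\pm1}$ of its characteristic polynomial $qt^2-\lambda t+1$ have modulus at most $1$ thanks to the unitarity bound $q^{-1/2}\le|\alpha|\le q^{1/2}$. Hence every rearrangement performed below is legitimate, and the identity of holomorphic functions so obtained near the origin extends, by uniqueness of meromorphic continuation, to an identity on all of $\C^2$.

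First I would sum over $l_1$: for fixed $l_2\ge j$ the inner sum $\sum_{l_1\ge0}\beta_E(l_1+l_2+1)\,x^{l_1}$ is precisely $A_{l_2+1}(x)$, so \eqref{eq:re6} yields
\begin{align*}
U_j(x,y)&=\sum_{l_2\ge j}\beta_E(l_2)\,y^{l_2}\,\frac{\beta_E(l_2+1)-q^{-1}x\,\beta_E(l_2)}{1-\lambda q^{-1}x+q^{-1}x^2}\\
&=\frac{1}{1-\lambda q^{-1}x+q^{-1}x^2}\Bigl(\sum_{l_2\ge j}\beta_E(l_2)\beta_E(l_2+1)\,y^{l_2}-q^{-1}x\sum_{l_2\ge j}\beta_E(l_2)^2\,y^{l_2}\Bigr).
\end{align*}
Shifting the index of summation by $j$ turns the two remaining sums into $y^jC_j(y)$ and $y^jB_j(y)$, so that
\[
U_j(x,y)=\frac{y^j\bigl(C_j(y)-q^{-1}x\,B_j(y)\bigr)}{1-\lambda q^{-1}x+q^{-1}x^2}.
\]

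Next I would eliminate $C_j(y)$ by substituting Lemma \ref{lem:relation}, i.e. $C_j(y)=(q^{-1}\lambda\,B_j(y)+\beta_E(j)\beta_E(j+1)-q^{-1}\lambda\,\beta_E(j)^2)/(1+q^{-1}y)$, and combining this with the term $-q^{-1}x\,B_j(y)$ over the common denominator $1+q^{-1}y$. The coefficient of $B_j(y)$ becomes $q^{-1}\lambda-q^{-1}x(1+q^{-1}y)=q^{-1}\lambda-q^{-1}x-q^{-2}xy$, which is exactly the numerator in the asserted formula, and the proposition follows. If a fully explicit rational expression in $x,y$ is desired, one inserts the closed form of $B_j(y)$ from Proposition \ref{prop:410x} at this point.

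No real obstacle is involved: the only matters requiring attention are the justification of the interchange of summations — supplied by the convergence remark above — and the bookkeeping of the index shift, after which the argument reduces to the elementary algebra of the previous paragraph.
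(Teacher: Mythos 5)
Your proof is correct and follows essentially the same route as the paper: sum over $l_1$ first to recognize $A_{l_2+1}(x)$, apply \eqref{eq:re6} to obtain $U_j(x,y)=y^j\bigl(C_j(y)-q^{-1}xB_j(y)\bigr)/(1-\lambda q^{-1}x+q^{-1}x^2)$, and then eliminate $C_j(y)$ via Lemma \ref{lem:relation}. The added convergence discussion is sound but not strictly needed for the formal identity.
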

\begin{proof}
From \eqref{eq:re6} we see that
    \[
    U_j(x,y)  =y^j \sum_{l=0}^\inf A_{l+j+1}(x)\, \beta_E(l+j)\, y^l 
     =\frac{y^j\left( C_j(y)-q^{-1}x \, B_j(y) \right) }{1-\lambda q^{-1}x+q^{-1}x^2}.
    \]
Hence, the assertion follows from Lemma \ref{lem:relation}.
\end{proof}

\begin{lem}\label{lem:20191230}
We have
    \[
    \sum_{k=0}^\inf \sum_{u=0}^\inf  x^k y^u \beta_E(k+u+1)^2 =D(y)  \{ B_0(x) \, f_1(x,y)+f_2(x,y) \},
    \]
where 
    \begin{align*}
    f_1(x,y)& \coloneq x^{-2}y+x^{-1}+q^{-1}x^{-1}y-q^{-2}\lambda^2 x^{-1}y+q^{-3}y^2  ,\\
    f_2(x,y)& \coloneq -x^{-2}y-x^{-1}y\, \beta_E(1)^2 -x^{-1}-q^{-1}x^{-1}y+q^{-2}\lambda^2 x^{-1}y.
    \end{align*}
\end{lem}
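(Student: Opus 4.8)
Write $S(x,y)$ for the left-hand side of Lemma~\ref{lem:20191230}. The plan is to reduce it to the one-variable generating functions $B_j$ already under control, exploiting the manifest symmetry $S(x,y)=S(y,x)$ (which holds because $\beta_E(k+u+1)^2$ is symmetric in $k$ and $u$). Collecting the sum by the value of $k$ and recognising the inner sum over $u$ gives $S(x,y)=\sum_{k\ge 0}x^k B_{k+1}(y)$.

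First I would record the third-order recursion for the sequence $\beta_E(n)^2$. Abbreviating
\[
D(y)^{-1}=1-(q^{-1}\lambda^2-1)q^{-1}y+(q^{-1}\lambda^2-1)q^{-2}y^2-q^{-3}y^3=:1+a_2y+a_1y^2+a_0y^3,
\]
Proposition~\ref{prop:410x} (equivalently a short manipulation of \eqref{eq:re1}) shows that $D(y)^{-1}B_j(y)$ is a polynomial of degree $\le 2$, whence
\[
\beta_E(n+3)^2+a_2\,\beta_E(n+2)^2+a_1\,\beta_E(n+1)^2+a_0\,\beta_E(n)^2=0\qquad(n\ge 0),
\]
so that each $B_j(y)$ satisfies the same recursion in $j$. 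Summing this recursion against $y^k$ over $k\ge 0$ (with $x$ a passive parameter) and writing $T(y,x):=\sum_{k\ge 0}y^kB_k(x)$, a routine collection of the boundary terms gives
\[
T(y,x)=D(y)\Bigl[B_0(x)(1+a_2y+a_1y^2)+B_1(x)(y+a_2y^2)+B_2(x)\,y^2\Bigr].
\]
Since $S(x,y)=S(y,x)=y^{-1}\bigl(T(y,x)-B_0(x)\bigr)$ and $-B_0(x)=-D(y)\,D(y)^{-1}B_0(x)$, the term $-B_0(x)$ cancels the $1+a_2y+a_1y^2$ part of the bracket, leaving only $-a_0y^3B_0(x)$, so
\[
S(x,y)=D(y)\Bigl[-a_0y^2\,B_0(x)+B_1(x)(1+a_2y)+B_2(x)\,y\Bigr].
\]

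Finally I would eliminate $B_1(x)$ and $B_2(x)$ via the elementary shift identities $x\,B_{j+1}(x)=B_j(x)-\beta_E(j)^2$ together with $\beta_E(0)=1$, i.e. $B_1(x)=x^{-1}(B_0(x)-1)$ and $B_2(x)=x^{-2}(B_0(x)-1)-x^{-1}\beta_E(1)^2$. Substituting, the coefficient of $B_0(x)$ becomes $D(y)\bigl[x^{-2}y+x^{-1}+a_2x^{-1}y-a_0y^2\bigr]$, which is exactly $D(y)f_1(x,y)$ once $a_2=q^{-1}-q^{-2}\lambda^2$ and $-a_0=q^{-3}$ are inserted, and the terms not involving $B_0(x)$ collect in the same way to $D(y)f_2(x,y)$; this yields the asserted formula. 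The argument is short and purely formal, so the only real work — and the only place to be careful — is the bookkeeping of coefficients: $f_1$ and $f_2$ carry negative powers of $x$ that are individually spurious and cancel between $B_0(x)f_1(x,y)$ and $f_2(x,y)$ (automatic, since $S(x,y)$ is a genuine power series), which is why the displayed expression is not literally a partial-fraction decomposition.
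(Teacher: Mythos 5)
Your proof is correct and follows essentially the same route as the paper's. Both arguments reduce $S(x,y)$ to a $D(y)$-multiple of the same linear combination of $B_0(x),B_1(x),B_2(x)$ --- the paper by substituting Proposition~\ref{prop:410x} with $j=k+1$ into $\sum_k x^k B_{k+1}(y)$, you by applying the third-order recursion for $\beta_E(n)^2$ to the generating function $\sum_k y^k B_k(x)$ --- and both then finish with the shift identities $xB_1(x)=B_0(x)-1$ and $x^2B_2(x)=B_0(x)-1-x\beta_E(1)^2$.
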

\begin{proof}
By the definition of the series $B_j(y)$ and Proposition \ref{prop:410x}, we obtain
    \begin{align*}
    & \sum_{k=0}^\inf \sum_{u=0}^\inf  x^k y^u \beta_E(k+u+1)^2 
    = \sum_{k=0}^\inf x^k B_{k+1}(y) \\
    &=D(y)\sum_{k=0}^\inf x^k \{ \beta_E(k+1)^2 + y(\beta_E(k+2)^2 \\
    & \qquad -(q^{-2}\lambda^2 -q^{-1})\beta_E(k+1)^2)+q^{-3}y^2\beta_E(k)^2 \}\\
    &= D(y)\, \{  yB_2(x)+ (1+q^{-1}y-q^{-2}\lambda^2)B_1(x)+q^{-3}y^2B_0(x)  \}.
    \end{align*} 
Substituting $xB_1(x)=B_0(x)-1$ and $x^2B_2(x)=B_0(x)-1-x\beta_E(1)^2$ into the last form, we obtain the assertion.
\end{proof}

\begin{lem}\label{lem:0409l}
We have
    \begin{multline*}
    (1+q^{-1}z)\, (1-\lambda q^{-1}y+q^{-1}y^2) \sum_{k=1}^\inf \sum_{l=0}^\inf \sum_{u=0}^\inf x^k y^l z^u \beta_E(k+u,k+l+u+1) \\
    =(q^{-1}\lambda-q^{-1}y-q^{-2}yz) x \sum_{k=0}^\inf x^k B_{k+1}(z) - q^{-1}x\, C_0(x).
    \end{multline*}
\end{lem}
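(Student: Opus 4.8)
The plan is to prove the identity by a direct manipulation of generating series, reducing everything to the one-variable functions $A_j$, $B_j$, $C_j$ and to the relations \eqref{eq:re1}, \eqref{eq:re6} and Lemma~\ref{lem:relation}. Since all the series in play converge on a small polydisc around the origin (a consequence of the bound on the Satake parameter recalled above), every rearrangement below is legitimate; equivalently, one may read the whole computation as an identity of formal power series in $x,y,z$.

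First I would invoke \eqref{eq:re5} to write $\beta_E(k+u,\,k+l+u+1)=\beta_E(k+u)\,\beta_E(k+l+u+1)$, which decouples the $l$-summation. Summing over $l$ produces $\sum_{l\geq 0}y^l\beta_E(k+l+u+1)=A_{k+u+1}(y)$; applying the second closed form in \eqref{eq:re6} (legitimate since the relevant index $k+u+1\geq 2$) and clearing the denominator $1-\lambda q^{-1}y+q^{-1}y^2$ turns the left-hand triple sum into
\[
T_1-q^{-1}y\,T_2,\qquad
T_1:=\sum_{k\geq 1}\sum_{u\geq 0}x^k z^u\beta_E(k+u)\beta_E(k+u+1),\qquad
T_2:=\sum_{k\geq 1}\sum_{u\geq 0}x^k z^u\beta_E(k+u)^2.
\]
Carrying out the $u$-sums and shifting $k\mapsto k+1$ gives $T_2=x\,P$ and $T_1=\sum_{k\geq 1}x^kC_k(z)$, where I abbreviate $P:=\sum_{k\geq 0}x^kB_{k+1}(z)$.

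Next I would multiply through by $1+q^{-1}z$ and use Lemma~\ref{lem:relation} term by term in the form $(1+q^{-1}z)C_k(z)=q^{-1}\lambda\,B_k(z)+\beta_E(k)\beta_E(k+1)-q^{-1}\lambda\,\beta_E(k)^2$. Summing against $x^k$ over $k\geq 1$, the $B_k(z)$-part reassembles into $q^{-1}\lambda\,x\,P$, while the remaining terms, after recognising $\sum_{k\geq 0}x^k\beta_E(k)\beta_E(k+1)=C_0(x)$, $\sum_{k\geq 0}x^k\beta_E(k)^2=B_0(x)$ and $\beta_E(0)=1$, contribute $C_0(x)-\beta_E(1)-q^{-1}\lambda\bigl(B_0(x)-1\bigr)$. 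Collecting the coefficient of $x\,P$ from $(1+q^{-1}z)T_1-q^{-1}y(1+q^{-1}z)T_2$ then yields precisely $q^{-1}\lambda-q^{-1}y-q^{-2}yz$, so it remains only to see that the leftover $C_0(x)-\beta_E(1)-q^{-1}\lambda(B_0(x)-1)$ equals $-q^{-1}x\,C_0(x)$. This is immediate from one further application of Lemma~\ref{lem:relation}, now with $j=0$ and the variable $x$, which gives $(1+q^{-1}x)C_0(x)=q^{-1}\lambda\,B_0(x)+\beta_E(1)-q^{-1}\lambda$.

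I do not expect a genuine obstacle: the whole argument is formal bookkeeping with generating functions. The only delicate point is keeping careful track of the index shifts and of the boundary terms that appear when passing from $\sum_{k\geq 1}$ to $\sum_{k\geq 0}$, together with the observation that these boundary terms collapse exactly via the second use of Lemma~\ref{lem:relation}; that is the single step where a stray sign or a misplaced use of $\beta_E(0)=1$ would break the identity.
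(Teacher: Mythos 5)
Your proof is correct and follows the same route the paper indicates (the paper's proof is a one-line appeal to \eqref{eq:re6} and Lemma~\ref{lem:relation}). Your reconstruction supplies the missing intermediate bookkeeping — in particular the decoupling via \eqref{eq:re5}, the index-shift identifications $T_2 = xP$ and $T_1 = \sum_{k\ge1} x^k C_k(z)$, the collapse of the boundary terms using $\beta_E(0)=1$, and the final cancellation via Lemma~\ref{lem:relation} at $j=0$ in the variable $x$ — all of which check out. No gap.
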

\begin{proof}
This equality follows from \eqref{eq:re6} and Lemma \ref{lem:relation}.
\end{proof}

\subsection{Local explicit formula}

We compute the local zeta function $Z_{\cE_v}(\Phi_v,\phi_v,s)$ in the unramified situation.
Let $\cK = \GL_2(\fo)\times \GL_2(\fo)\times \GL_2(\fo)$ be the maximal open compact subgroup of $G(F)$, $V_E(\fo) = \{x\in V(\fo) \mid P(x)\in d_E(F^\times)^2\}$ be a compact subset of $V_E(F)$.
For $l_1,  l_2, m, M\in\Z$ and $c\in F$, we set
    \[
    g(l_1,l_2,m_1,m_2,c) \coloneqq \left( 
        \begin{pmatrix} 
        \varpi^{-l_1} & 0 \\ 
        0 & 1
        \end{pmatrix}, 
        \begin{pmatrix} 
        1 & 0 \\ 
        0 & \varpi^{l_2}
        \end{pmatrix}, 
        \begin{pmatrix} 
        \varpi^{m_1} & c \\ 
        0 & \varpi^{m_2}
        \end{pmatrix} 
    \right) \in  G(F),
    \]
    \[
    S_E(l_1,l_2,m_1,m_2,c) \coloneqq x(\delta_E)\cdot \rho\left( g(l_1,l_2,m_1,m_2,c)\cK\right).
    \]
The goal of this section is an explicit formula for the local zeta function
    \[
    Z_E(\pi,s)\coloneqq \int_{V_E(\fo)} \beta_E(l_1,l_2) \, |P(x)|^{s-2}, \, \d x \qquad s\in\C.
    \]
Here, $l_1$ and $l_2$ are determined so that $x \in S(E,l_1,l_2,m_1,m_2,c)$ for some $m_1$, $m_2\in\Z$ and some $c\in F$.
See \eqref{eq:decoV}.

\begin{thm}\label{thm:localzeta}
Let $\pi$ be an irreducible unitary unramified representation of $\GL_2(F)$ with trivial central character.
We have
    \[
    Z_E(\pi,s)=\frac{L(1,\eta_E)^2 \,L(2s-1,\trep)\,L(2s-1,\pi,\mathrm{Ad}) }
    {2L(1,\trep) \, L(2,\trep)^3 \, N(\ff_E)^{s-1}} \cdot \mathcal{D}_{E}(\pi,s) ,
    \]
where 
    \[
    \mathcal{D}_E(\pi,s) \coloneq
        \begin{cases}
        1+q^{-2s+1}+q^{-2s}+q^{-4s+1} -2\eta(\varpi)\, q^{-2s}\lambda 
        & \text{if $\eta_E$ is unramified}, \\
        1+q^{-2s+1} & \text{if $\eta_E$ is ramified.}
        \end{cases}
    \]
\end{thm}

\subsection{Decompositions}\label{sec:binaryquad}

Recall that $X(D)$ is the set of \'etale quadratic algebras over $F$ embedded in $\M_2(F)$.
Set $Q(x)\coloneqq -4\det(x)$ $(x\in W(F))$ and $W(\fo)\coloneqq \left\{(x_1, x_{12}x_2) \mid x_1,x_{12},x_2\in\fo  \right\}$.
For each $E\in X(D)$, we set
    \[
    W_E(F)\coloneqq \{   x\in W(F) \mid Q(x)\in d_E(F^\times)^2   \} , \quad W_E(\fo)\coloneqq W(\fo)\cap W_E(F).
    \]
Let $\qG = \GL_1(F)\times \GL_2(F)$.
Consider the rational representation $\rho_W$ of $\qG$ on $W$ defined by
    \[
    x \, \rho_W(a,h)=\frac{a}{\det(h)} \, {}^t\! h x h ,\qquad (a,h)\in \qG, \quad x\in W(F).
    \]
Obviously $\mathrm{Ker}\rho_W=\{ (1,a I_2)\in \qG \mid a\in F^\times \}\cong F^\times$.
We have an orbit decomposition of $W^0(F)\coloneqq \{x\in W(F) \mid Q(x)\neq 0\}$ such that
    \[
    W^0(F)= \bigsqcup_{E\in X(D)}W_E(F),
    \]
where $W_E(F)$ denotes the orbit of $\diag( 1,-d_E)$.
The stabilizer $\qG_E$ of $\diag( 1,-d_E)$ and its connected component $\qG_E^0$ at $1$ are given by
    \[
    \qG_E^0=\{(1,g)\in \qG \mid g\in \mathrm{GO}_{2,E}^0(F)\}, \quad 
    \qG_E=\qG_E^0 \sqcup \diag( 1,-1) \qG_E^0 .
    \]
Let $\qK=\GL_1(\fo)\times\GL_2(\fo)$ be the maximal open compact subgroup of $\qG$ and $S_E(l_1,l_2)$ the $\qK$-orbit of $\varpi^{l_1} \, \diag( 1 ,-d_E \varpi^{2l_2})$.
Then we have the decomposition 
    \[
    W_E(F)= \bigsqcup_{l_1\in\Z} \bigsqcup_{l_2\in\Z_{\geq 0}} S_E(l_1,l_2), \quad W_E(\fo)= \bigsqcup_{l_1\in\Z_{\geq 0}} \bigsqcup_{l_2\in\Z_{\geq 0}} S_E(l_1,l_2).
    \]
From this we obtain
    \begin{equation}\label{eq:906e1}
    \qG_E^0\bsl \qG/\qK=\qG_E\bsl \qG/\qK=\bigsqcup_{m_1\in\Z} \bigsqcup_{m_2\in\Z_{\geq 0}} \qG_E^0 (\varpi^{m_1}, \diag(1,\varpi^{m_2})) \qK.
    \end{equation}
In addition, we have 
    \begin{equation}\label{eq:sym2}
    \vol( S_E(l_1,l_2)) = \frac{1}{2} \, \frac{1}{L(1,\trep)\, L(2,\trep)} \begin{cases} q^{-3l_1-2l_2} & \text{if $d_E\in \fo^\times$ and $l_2>0$}, \\ q^{-3l_1-2l_2-1} & \text{if $d_E\in \varpi \fo^\times$},  \\ q^{-3l_1}L(1,\eta) & \text{if $d_E\in \fo^\times$ and $l_2=0$}, \end{cases}
    \end{equation}
under the normalization $\vol(W(\fo))=1$.
See \cite{IS}*{Lemma 3.2} or \cite{Saito0}*{Proposition 2.8} for the proof.

By \eqref{eq:906e1} we have
    \[
    G_{x(\delta)}^0(F)\bsl G(F)/\cK=\bigsqcup_{l_1,l_2\in \Z_{\geq 0}}\bigsqcup_{m_1,m_2\in \Z , \; c\in F/\varpi^{m_1}\fo}   G_{x(\delta)}^0(F)\, g(l_1,l_2,m_1,m_2,c) \cK  .
    \]
Set $l_3\coloneqq \min(l_1,l_2)$.
Then we have
    \begin{equation}\label{eq:decoV}
    V_E(\fo)=\bigsqcup_{l_1,l_2\in\Z_{\geq 0}} \bigsqcup_{m_1\geq -l_3}
    \bigsqcup_{m_2\in\Z_{\geq 0} } \bigsqcup_{c\in \varpi^{-l_3}\fo/\varpi^{m_1}\fo}  S_E(l_1,l_2,m_1,m_2,c).
    \end{equation}

Set 
    \[
    \max \cF_j(x) = \max\{ |y_1|, \ |y_{12}|, \ |y_2| \},
    \]
where we write $\cF_j(x)=(y_1,y_{12},y_2)$.
See \S~\ref{sec:notation2} for notation.
\begin{lem}\label{lem:param}
For each $x\in V_E(\fo)$ and the parameter $(l_1,l_2)$, we have the relation
    \[
    |d_E| \, q^{-2l_j} \left(\max \cF_j(x)\right)^2  =|P(x)|   \qquad (j=1, \, 2).
    \]
\end{lem}
\begin{proof}
The assertion follows from
    \[
    \cF_1(x(\delta)\cdot\rho(g(l_1,l_2,m_1,m_2,c)))=(\delta\varpi^{2l_1},0,-1) \times \varpi^{l_2+m_1+m_2},
    \]
    \[
    \cF_2(x(\delta)\cdot\rho(g(l_1,l_2,m_1,m_2,c)))=(1,0,-\delta\varpi^{2l_2}) \times \varpi^{l_1+m_1+m_2}. \qedhere
    \]
\end{proof}

\subsection{Proof of Theorem \ref{thm:localzeta}}

Set $\sG = G(\F_q)$.
By Propositions \ref{prop:sor1} and \ref{prop:orbits}, we have a list of $\mathscr{G}$-orbits in $V(\F_q)$.
We use the same representative elements $x_j$ $(0\leq j\leq 5)$ given in Proposition \ref{prop:sor1}. 
By a direct calculation, one obtains the following lemma.
\begin{lem}\label{lem:card}
The order of each orbit is as follows: 
    \[
    \# \sG = \sG_{x_0}=q^{12}(1-q^{-1})^3(1-q^{-2})^3, \quad \#(x_0 \cdot \rho(\sG))=1,
    \]
    \[
    \# \mathscr{G}_{x_1}=q^8 (1-q^{-1})^5, \quad \#(x_1 \cdot \rho(\mathscr{G}))= q^4(1+q^{-1})^2 (1-q^{-2}),
    \]
    \[
    \# \mathscr{G}_{x_j}=q^7 (1-q^{-1})^3(1-q^{-2}), \quad \#(x_j \cdot \rho(\mathscr{G}))= q^5 (1-q^{-2})^2 \quad(j=2,3,4),
    \]
    \[
    \# \mathscr{G}_{x_5}=q^5 (1-q^{-1})^3, \quad \#(x_5 \cdot \rho(\mathscr{G}))= q^7(1-q^{-2})^3,
    \]
    \[
    \# \mathscr{G}_{x_6}=2q^4 (1-q^{-1})^4, \quad \#(x_6 \cdot \rho(\mathscr{G}))=\frac{1}{2} q^8(1+q^{-1})(1-q^{-2})^2,
    \]
    \[
    \# \mathscr{G}_{x_7}=2q^4 (1-q^{-2})^2, \quad \#(x_7 \cdot \rho(\mathscr{G}))=\frac{1}{2} q^8(1-q^{-1})^3(1-q^{-2}).
    \]
Here, we have choose regular elements $x_6, x_7\in V(\F_q)$ so that $P(x_6)\in (\F_q^\times)^2$ and $P(x_7)\in \F_q^\times\setminus (\F_q^\times)^2$.
\end{lem}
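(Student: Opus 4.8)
The plan is to read off every line of the table from the two classification results recalled above, using the orbit--stabilizer formula. Since $\F_q$ has characteristic different from $2$ and every quaternion algebra over a finite field splits, we have $D\otimes\F_q=M_2(\F_q)$; hence Proposition \ref{prop:sor1} applies verbatim and $x_0,\dots,x_5$ represent all singular $\sG$-orbits, while Proposition \ref{prop:orbits} says the regular $\sG$-orbits are indexed by $X(M_2(\F_q))=\{\F_q\oplus\F_q,\ \F_{q^2}\}$, giving the two orbits $x_6\cdot\rho(\sG)$ and $x_7\cdot\rho(\sG)$ (with $P(x_6)$ a square and $P(x_7)$ a non-square in $\F_q^\times$). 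For each representative $x$ one has $\sG_x=G_x(\F_q)$, and $\#\bigl(x\cdot\rho(\sG)\bigr)=\#\sG/\#\sG_x$ by the orbit--stabilizer theorem, so it suffices to count $\F_q$-points of each stabilizer.

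First I would record $\#\GL_2(\F_q)=q^4(1-q^{-1})(1-q^{-2})$ and, for the Borel $B$ of upper-triangular matrices, $\#B(\F_q)=q^3(1-q^{-1})^2$; multiplying, $\#\sG=q^{12}(1-q^{-1})^3(1-q^{-2})^3$, which is the $x_0$ line since $G_{x_0}=G$. For $j=1,\dots,5$, the group $G_{x_j}$ in Proposition \ref{prop:sor1} is cut out inside a product of Borel (or full $\GL_2$) factors by a single multiplicative relation among diagonal entries, so counting $\F_q$-points is bookkeeping: e.g.\ $\sG_{x_1}$ consists of triples of upper-triangular matrices subject to the one relation $a=bc$ on three of the diagonal entries, giving $(q-1)^5q^3$; and $\sG_{x_2}$ consists of a free $h\in\GL_2(\F_q)$ together with a free upper-triangular third factor, the first factor $g$ being then determined, giving $\#\GL_2(\F_q)\cdot\#B(\F_q)$, while $G_{x_3},G_{x_4}$ have stabilizers isomorphic to $G_{x_2}$. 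Dividing these into $\#\sG$ produces the corresponding orbit cardinalities.

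For the two regular lines I would use Proposition \ref{prop:orbits}: $G^0_{x(\delta)}\cong\mathrm{GO}^0_{2,E}\times\mathrm{GO}^0_{2,E}$ with $\mathrm{GO}^0_{2,E}\cong R_{E/F}(\bb{G}_m)$ and $[G_{x(\delta)}:G^0_{x(\delta)}]=2$. Evaluating at $\F_q$, $\#R_{E/F}(\bb{G}_m)(\F_q)=\#E^\times$ equals $q-1$ for $E=\F_q\oplus\F_q$ and $q^2-1$ for $E=\F_{q^2}$, so $\#\sG_{x_6}=2(q-1)^4$ and $\#\sG_{x_7}=2(q^2-1)^2$, and again the orbit sizes follow by dividing into $\#\sG$. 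I would then verify the polynomial identity $\sum_{j=0}^{7}\#\bigl(x_j\cdot\rho(\sG)\bigr)=q^8=\#V(\F_q)$, which simultaneously confirms that no orbit has been omitted or double-counted. There is no genuine obstacle here; the only points needing a little care are keeping the single multiplicative constraints straight when counting the singular stabilizers, and noting that these stabilizers (and $G^0_{x(\delta)}$) are connected, so that Lang's theorem identifies the rational and geometric orbit pictures over $\F_q$ with no Galois-cohomological correction.
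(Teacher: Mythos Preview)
Your approach is correct and is exactly what the paper has in mind: it simply writes ``By a direct calculation, one obtains the following lemma,'' and your use of Propositions~\ref{prop:sor1} and~\ref{prop:orbits} together with orbit--stabilizer over $\F_q$ is that direct calculation. One small slip to fix: for $E=\F_q\oplus\F_q$ you have $\#E^\times=(q-1)^2$, not $q-1$ (your final values $\#\sG_{x_6}=2(q-1)^4$ and $\#\sG_{x_7}=2(q^2-1)^2$ are nonetheless correct, so this is just a typo in the intermediate line).
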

For each $x_j$ $(0\leq j\leq 7)$, set
    \[
    Z_{E,j}(\pi,s) = \int_{(x_j\rho(G(\fo/\varpi\fo)) + \varpi V(\fo))\cap V_E(\fo)} \beta_E(l_1,l_2) \, |P(x)|^{s-2} \, \d x.
    \]
For any $x\in V_E(\fo)$ and any $m\in\Z$,  $\varpi^m x$ has the same parameter 
$(l_1,l_2)$ as that of $x$ by Lemma \ref{lem:param}.
Hence, we obtain 
    \begin{equation}\label{eq:1} 
    (1-q^{-4s})Z_E(\pi,s)=\int_{V_E(\fo)\setminus \varpi V_E(\fo)}\beta_E(l_1,l_2) \, |P(x)|^{s-2} \, \d x =  \sum_{j=1}^7 Z_{E,j}(\pi,s).
    \end{equation}
Therefore, the explicit calculation for $Z_E(\pi,s)$ is reduced to those of $Z_{E,j}(\pi,s)$.
This is a standard method by Igusa for computations of local zeta functions, see \cite{Igusa}.

\subsubsection{Contribution of $x_2\cdot \rho(\sG)$}
Since any element in $x_2\cdot \rho(\sG)+\varpi V(\fo)$ can be reduced to the form
    \[
    X_2\coloneqq  \left( 
        \begin{pmatrix} 
        1&0 \\ 
        0&1 
        \end{pmatrix}, 
        \begin{pmatrix} 
        x&z \\ 
        y&-x 
        \end{pmatrix}
    \right) \quad (x,y,z\in\varpi\fo)
    \]
by the action of $\sG$, the integral $Z_{E,2}(\pi,s)$ becomes
    \[
    \#(x_2 \cdot \rho(\sG)) \, q^{-5} \, \int_{(\varpi\fo)^{\oplus 3},\, P(X_2)\in d_E(F^\times)^2}  \beta_E(l_1,l_2) \, |P(X_2)|^{s-2} \d x \, \d y \, \d z .
    \]
In addition, we have $\cF_1(X_2)= (z,-2x,-y)$ and $\cF_2(X_2)=  (y,-2x,-z)$. 
Hence, by \eqref{eq:sym2}, Lemma \ref{lem:card} and the fact that $\rho(g,g,1)$ acts on $Y=\left(
    \begin{smallmatrix} 
    -y&x \\ 
    x&z 
    \end{smallmatrix}
\right)$ as $Y\cdot\rho(g,g,1)=\det(g)^{-1}{}^t\!gYg$, one finds that $Z_{E,2}(\pi,s)$ equals
    \[
    \frac{1}{2} \frac{L(2s-1,\trep)}{L(1,\trep) \, L(2,\trep)^3} \, q^{-2s+1} \, \left( L(1,\eta)+ \sum_{l=1}^\inf \beta_E(l,l) \, q^{(-2s+2)l}  \right) \qquad \text{if $d_E\in\fo^\times$},
    \]
    \[
    \frac{1}{2} \frac{L(2s-1,\trep)}{L(1,\trep) \, L(2,\trep)^3}\, q^{-3s+2}  \sum_{l=0}^\inf \beta_E(l,l) \, q^{(-2s+2)l} \qquad \text{if $d_E\in\varpi\fo^\times$}.
    \]

\subsubsection{Contribution of $x_3\cdot\rho(\sG)$}
Since any element in $x_3\cdot \rho(\sG)+\varpi V(\fo)$ is reduced to the form
    \[
    X_3\coloneqq \left( 
        \begin{pmatrix} 
        1&0 \\ 
        x&z 
        \end{pmatrix}, 
        \begin{pmatrix} 
        0&1 \\ 
        y&-x 
        \end{pmatrix}
    \right) \quad (x,y,z\in\varpi\fo)
    \]
by the action of $\sG$, the integral $Z_{E,3}(\pi,s)$ becomes
    \[
    \#(x_3 \cdot \rho(\sG)) \, q^{-5} \, \int_{(\varpi\fo)^{\oplus 3},\, P(X_3)\in d_E(F^\times)^2}  \beta_E(l_1,l_2) \, |P(X_3)|^{s-2} \d x \, \d y \, \d z .
    \]
In addition, we have $\cF_1(X_3)= (1,0,- x^2- y z)$ and $\cF_2(X_3)=  (y,-2x,-z)$. 
By \eqref{eq:sym2} and the fact that $\rho(1,g,{}^t\!g^{-1})$ acts on $Y=\left(
    \begin{smallmatrix} 
    -y&x \\ 
    x&z 
    \end{smallmatrix}\right)$ as $Y\cdot \rho(1,g,{}^t\!g^{-1})= \det(g)^{-1}\, {}^t\!gYg$, one finds that $Z_{E,3}(\pi,s)$ equals
    \begin{multline*}
    \frac{1}{2} \frac{1}{L(1,\trep) \, L(2,\trep)^3} \, q^{-2s+1}  \Big( L(1,\eta)\sum_{l_1=0}^\inf \beta_E(l_1+1,0) q^{(-2s+1)l_1} \\
    + \sum_{l_1=0}^\inf \sum_{l_2=1}^\inf \beta_E(l_1+l_2+1,l_2) \, q^{(-2s+1)l_1} q^{(-2s+2)l_2}  \Big) \qquad \text{if $d_E\in\fo^\times$},
    \end{multline*}
    \[
    \frac{1}{2} \frac{1}{L(1,\trep) \, L(2,\trep)^3}\, q^{-3s+2} \sum_{l_1=0}^\inf \sum_{l_2=0}^\inf \beta_E(l_1+l_2+1,l_2) \, q^{(-2s+1)l_1} q^{(-2s+2)l_2}   \;\; \text{if $d_E\in\varpi\fo^\times$}.
    \]

\subsubsection{Contribution of $x_4\cdot \rho(\sG)$}

Since any element in $x_4\cdot \rho(\sG)+\varpi V(\fo)$ is reduced to the form
    \[
    X_4\coloneqq \left( 
        \begin{pmatrix} 
        1&-x \\ 
        0&y 
        \end{pmatrix}, 
        \begin{pmatrix} 
        0&z \\ 
        1&x 
        \end{pmatrix} 
    \right) \quad (x,y,z\in\varpi\fo)
    \]
by the action of $\sG$, the integral $Z_{E,4}(\pi,s)$ becomes
    \[
    \#(x_4 \cdot \rho(\sG)) \times q^{-5} \times \int_{(\varpi\fo)^{\oplus 3},\, P(X_4)\in d_E(F^\times)^2}  \beta_E(l_1,l_2) \, |P(X_4)|^{s-2} \d x \, \d y \, \d z .
    \]
In addition, we have $\cF_1(X_4)= (z,-2x,-y)$ and $\cF_2(X_4)= (1,0,-x^2- y z)$. 
By \eqref{eq:sym2} and the fact that $\rho(g,1,g)$ acts on $Y=\left( 
    \begin{smallmatrix} 
    -y&x \\ 
    x&z 
    \end{smallmatrix} 
\right)$ as $Y\cdot \rho(g,1,g) = \det(g)^{-1} \, {}^t\!gYg$, one finds that $Z_{E,4}(\pi,s)$ equals
    \begin{multline*}
    \frac{1}{2} \frac{1}{L(1,\trep) \, L(2,\trep)^3} \, q^{-2s+1}  \Big( L(1,\eta)\sum_{l_1=0}^\inf \beta_E(0,l_1+1) q^{(-2s+1)l_1} \\
    + \sum_{l_1=0}^\inf \sum_{l_2=1}^\inf \beta_E(l_2,l_1+l_2+1) \, q^{(-2s+1)l_1} q^{(-2s+2)l_2}  \Big) \qquad \text{if $d_E\in\fo^\times$},
    \end{multline*}
    \[
    \frac{1}{2} \frac{1}{L(1,\trep) \, L(2,\trep)^3}\, q^{-3s+2} \sum_{l_1=0}^\inf \sum_{l_2=0}^\inf \beta_E(l_2,l_1+l_2+1) \, q^{(-2s+1)l_1} q^{(-2s+2)l_2}   \;\; \text{if $d_E\in\varpi\fo^\times$}.
    \]

\subsubsection{Contribution of $x_5\cdot \rho(\sG)$}

Since any element in $x_5\cdot \rho(\sG)+\varpi V(\fo)$ is reduced to
    \[
    X_5\coloneqq \left( 
        \begin{pmatrix} 
        1&0 \\ 
        0&1 
        \end{pmatrix},
        \begin{pmatrix} 
        0&1 \\ 
        y&0 
        \end{pmatrix} 
    \right) \quad (y\in\varpi\fo)
    \]
by the action of $\sG$, the integral $Z_{E,5}(\pi,s)$ becomes
    \[
    \#(x_5 \, \rho(\sG)) \times q^{-7} \times \int_{\varpi\fo,\, P(X_5)\in d_E(F^\times)^2}  \beta_{\pi,\delta}(l_1,l_2) \, |P(X_5)|^{s-2} \d y .
    \]
In addition, we have $\cF_1(X_5)=  (1,0,- y)$ and $\cF_2(X_5)= ( y,0,-1)$. 
Therefore, $Z_{E,5}(\pi,s)$ equals
    \[
    \frac{1}{2} \frac{1}{L(1,\trep) \, L(2,\trep)^3}\sum_{l=1}^\inf \beta_E(l,l) \, q^{(-2s+2)l} \qquad \text{if $d_E\in\fo^\times$},
    \]
    \[
    \frac{1}{2} \frac{1}{L(1,\trep) \, L(2,\trep)^3}\, q^{-s+1}  \sum_{l=0}^\inf \beta_E(l,l) \, q^{(-2s+2)l} \qquad \text{if $d_E\in\varpi\fo^\times$}.
    \]

\subsubsection{Contribution $x_6\cdot \rho(\sG)$}

By Lemma \ref{lem:card}, one finds
    \[
    Z_{E,6}(\pi,s) = \frac{1}{2}\frac{L(1,\eta)^2}{L(1,\trep) \, L(2,\trep)^3} \quad \text{if $d_E\in (\fo^\times)^2$,} \quad \text{and $=0$ otherwise}.
    \]

\subsubsection{Contribution $x_7\cdot \rho(\sG)$}

By Lemma \ref{lem:card}, one gets
    \[
    Z_{E,7}(\pi,s)= \frac{1}{2}\frac{L(1,\eta)^2}{L(1,\trep) \, L(2,\trep)^3} \quad \text{if $d_E\in \fo^\times \setminus (\fo^\times)^2$,} \quad \text{and $=0$ otherwise}.
    \]

\subsubsection{Summand except $Z_{E,1}(\pi,s)$}

Combining Propositions \ref{prop:410x} and \ref{prop:412x} and the explicit forms of $Z_{E,j}(\pi,s,\delta)$ $(2\leq j\leq 7)$ described above, we obtain
    \begin{equation}
    \sum_{j=2}^7 Z_{E,j}(\pi,s) = \frac{1}{2} \frac{L(1,\eta)^2 \, L(2s-1,\trep)\,L(2s-1,\pi,\mathrm{Ad}) }{L(1,\trep) \, L(2,\trep)^3 \, N(\ff_\delta)^{s-1}} \times \mathscr{T}_{E,0}(\pi,s),
    \end{equation}
where
    \begin{align*}
    \mathscr{T}_{E,0}(\pi,s) \coloneqq
    & 1 + q^{-2s+1} - q^{-2s} + 3 q^{-4s+1} - q^{-4s} + 3 q^{-6s+2} \\ 
    &+ q^{-6s+1} - q^{-8s+3}  + 2 q^{-8s+2} - 2 q^{-2s} \lambda - 2 q^{-4s+1} \lambda  \\
    &+  2 q^{-4s} \lambda - 6 q^{-6s+1} \lambda + q^{-4s} \lambda^2 + q^{-6s+1} \lambda^2  \qquad \text{if $d_E\in(\fo^\times)^2$}, 
    \end{align*}
    \begin{align*}
    \mathscr{T}_{E,0}(\pi,s) \coloneqq 
    & 1+q^{-2s+1}+q^{-2s}+q^{-4s+1}-q^{-4s} +q^{-6s+2} \\
    & +q^{-6s+1} + q^{-8s+3} +2q^{-8s+2} + 2q^{-2s}\lambda  - 2q^{-4s+1}\lambda \\ 
    & - 2q^{-4s}\lambda + 2q^{-6s+1}\lambda  - q^{-4s}\lambda^2 - q^{-6s+1}\lambda^2  \qquad \text{if $d_E\in\fo^\times\setminus(\fo^\times)^2$},    
    \end{align*}
    \[
    \mathscr{T}_{E,0}(\pi,s) \coloneqq  1+q^{-2s+1}-q^{-2s}+q^{-4s+1}-2q^{-4s+1}\lambda+2q^{-6s+2}  \qquad \text{if $d_E \in \varpi\fo^\times$}.
    \]


\subsection{Calculations for the contribution of \texorpdfstring{$x_1\cdot \rho(\sG)$}{TEXT}}\label{sec:Zx1}

In this section, we prove the following explicit formula for $Z_{E,1}(\pi,s)$:
    \begin{equation}\label{eq:Zx1}
    Z_{E,1}(\pi,s)=\frac{L(1,\eta)^2 \, L(2s-1,\trep)\,L(2s-1,\pi,\mathrm{Ad}) }
    { 2\, L(1,\trep) \, L(2,\trep)^3\, N(\ff_\delta)^{s-1}} \, \mathscr{T}_{E,1}(\pi,s).
    \end{equation}
Here, $\mathscr{T}_{E,1}(\pi,s)$ is defined by
    \begin{align*}
    \mathscr{T}_{E,1}(\pi,s)
    & = 2q^{-2s} - 2 q^{-4s+1} - 3 q^{-6s+2} - 2 q^{-6s+1} - q^{-6s} + q^{-8s+3} - 2 q^{-8s+2} \\
    & \quad - q^{-8s+1} + 2 q^{-4s+1} \lambda - 2 q^{-4s} \lambda + 6 q^{-6s+1} \lambda \\
    & \quad + 2 q^{-6s} \lambda - q^{-4s} \lambda^2 - q^{-6s+1} \lambda^2 
    \end{align*}
if $d_E \in  (\fo^\times)^2$,
    \begin{align*}
    \mathscr{T}_{E,1}(\pi,s)
    & = -q^{-6s+2} - 2 q^{-6s+1} - q^{-6s} - q^{-8s+3} - 2 q^{-8s+2} - q^{-8s+1} \\
    & \quad + 2 q^{-4s+1}\lambda +  2 q^{-4s} \lambda - 2 q^{-6s+1} \lambda - 2 q^{-6s} \lambda + q^{-4s} \lambda^2 +  q^{-6s+1} \lambda^2 
    \end{align*}
if $d_E \in\fo^\times\setminus (\fo^\times)^2$, and
\[
    \mathscr{T}_{E,1}(\pi,s) = q^{-2s}-q^{-4s+1}-q^{-4s}+2q^{-4s+1}\lambda-2q^{-6s+2}-q^{-6s+1} 
    \]
if $d_E \in\varpi\fo^\times$.

To prove \eqref{eq:Zx1}, we begin with reduction of elements as we did for other terms.
Since any element in $x_1\cdot\rho(\sG)+\varpi V(\fo)$ can be reduced to the form
    \[
    X_1\coloneqq  \left( 
        \begin{pmatrix} 
        1&0 \\ 
        0&x 
        \end{pmatrix}, 
        \begin{pmatrix} 
        0&y \\ 
        z&w 
        \end{pmatrix} 
    \right)  \quad (x,y,z,w\in\varpi\fo)
    \]
by the action of $\sG$, the integral $Z_{E,1}(\pi,s)$ equals
    \[
    \#(x_1 \cdot \rho(\sG)) \, q^{-4} \, \int_{(\varpi\fo)^{\oplus 4}, \, P(X_1)\in d_E(F^\times)^2}  \beta_E(l_1,l_2) \, |P(X_1)|^{s-2} \d x \, \d y \, \d z \, \d w.
    \]
From the above equality, we deduce
    \begin{equation}\label{eq:04091}
    Z_{E,1}(\pi,s)=(1+q^{-1})(1-q^{-2})^2\, q^{-2s} \sum_{k=0}^\inf q^{-k} \tZ_E(\pi,s,k).
    \end{equation}
Here, $\tZ_E(\pi,s,k)$ is given by
    \[
    \tZ_E(\pi,s,k) = \int \beta_E(l_1,l_2) \, |x_{12}^2+\varpi^{k+1}x_1x_2|^{s-2} \, \d x_1\, \d x_{12}\, \d x_2, 
    \]
where the integral is over the set $\{(x_1, x_2, x_3)\in\fo^{\oplus 3} \mid x_{12}^2+\varpi^{k+1}x_1x_2\in d_E(F^\times)^2\}$.
It follows from Lemma \ref{lem:param} that $(l_1, l_2)$ is determined by
    \[
    q^{-2l_1}|d_E|=\frac{|x_{12}^2+\varpi^{k+1}x_1x_2|}{\max(|x_1|,|x_{12}|, q^{-k-1}|x_2|)^2},\;\; q^{-2l_2}|d_E|=\frac{|x_{12}^2+\varpi^{k+1}x_1x_2|}{\max(|x_2|,|x_{12}|,q^{-k-1}|x_1|)^2}.
    \]
For $k,j\in\Z_{\geq 0}$, we set
    \[
    \Xi_E(\pi,s,k,j) = \int_{\fo\oplus\fo}\beta_E(l_1,l_2) \, |x_{12}^2+\varpi^k x_1|^{s-2} \d x_{12} \, \d x_1,
    \]
where $x_{12}^2+\varpi^k x_1\in d_E(F^\times)^2$ and
$(l_1, l_2)$ is determined by
    \[
    q^{-2l_1}|d_E|=\frac{ |x_{12}^2+\varpi^k x_1|}{\max(|x_1|,|x_{12}|, q^{-k})^2},\quad q^{-2l_2}|d_E|=q^{-2-2j}|x_{12}^2+\varpi^{k}x_1|.
    \]

We divide the domain of integration into the following five regions: 
\begin{itemize}
\item[(i)] $x_{12}\in\fo^\times$, $x_1\in\fo$ and $x_2\in\fo$, 
\item[(ii)] $x_{12}\in\varpi\fo$, $x_1\in\fo^\times$ and $x_2\in\fo^\times$, 
\item[(iii)] $x_{12}\in\varpi\fo$, $x_1\in\fo^\times$ and $x_2\in\varpi\fo$, 
\item[(iv)] $x_{12}\in\varpi\fo$, $x_1\in\varpi\fo$ and $x_2\in\fo^\times$, 
\item[(v)] $x_{12}\in\varpi\fo$, $x_1\in\varpi\fo$ and $x_2\in\varpi\fo$.
\end{itemize}
For $k\in\Z_{\geq -1}$ and $j,m\in\Z_{\geq 0}$ set
    \[
    \Omega_E(\pi,s,k,j,m) =  \int_{\fo^\times}\d x_2 \int_{\fo}\d x_{12} \, \beta_E(l,l+m) \, \, |x_{12}^2+\varpi^k x_2|^{s-2},
    \]
where  $x_{12}^2+\varpi^k x_2\in d_E (F^\times)^2$ and $l$ is determined by $q^{-2l}|d_E|=q^{-2-2j}|x_{12}^2+\varpi^{k}x_2|$.
Set
    \[
    \Delta_{E,1} =
        \begin{cases} 
        1  & \text{if  $d_E\in (\fo^\times)^2$,} \\ 
        0  & \text{otherwise,} 
        \end{cases} \quad 
    \Delta_{E,2} =
        \begin{cases} 
        1  & \text{if  $d_E \in \fo^\times\setminus(\fo^\times)^2$,} \\ 
        0  & \text{otherwise,} 
        \end{cases} 
    \]
    \[
    \Delta_{E,3} = 
        \begin{cases} 
        1  & \text{if  $d_E \in \varpi\fo^\times$,} \\ 
        0  & \text{otherwise.} 
        \end{cases}
    \]
Then we obtain
    \begin{equation}\label{eq:04092}
    \frac{1-q^{-2s+1}}{1-q^{-1}}\, \tZ_E(\pi,s,k) = \Delta_{E,1} +  q^{-2s+3} \Omega_E(\pi,s,k-1,0,0)+2q^{-2s+2}\Xi_E(\pi,s,k,0).
    \end{equation}
\begin{lem}\label{lem:04091}
We have
    \[
    \Omega_E(\pi,s,-1,j,m)=\frac{1}{2}q^{s-2}(1-q^{-1})\, \beta_E(j,j+m) \, \Delta_{E,3}
    \]
and
    \begin{multline*}
    \Omega_E(\pi,s,0,j,m)=\\
    \frac{1}{2}(1-q^{-1})\, \left\{ 1-(1+\eta(\varpi))q^{-1} \right\}\, \beta_E(1+j,1+j+m) \{ \Delta_{E,1}+\Delta_{E,2}\} \\
    +(1-q^{-1})\, q^{-s+1} \int_{\varpi x_2\in \varpi \fo\cap d_E(F^\times)^2} \beta_E(l,l+m)\,  |x_2|^{s-2} \, \d x_2.
    \end{multline*}
Here, $l$ is determined by $q^{-2l}|\delta|=q^{-3-2j}|x_2|$.
For $k\geq 1$,  we have
    \[
    \Omega_E(\pi,s,k,j,m)=(1-q^{-1})^2 \beta_E(1+j,1+j+m)\,  \Delta_{E,1} + q^{-2s+3}\Omega_E(\pi,s,k-2,j+1,m).
    \]
\end{lem}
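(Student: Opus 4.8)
The plan is to compute the $p$-adic integral defining $\Omega_E(\pi,s,k,j,m)$ directly, splitting the domain of integration according to the valuation of $x_{12}$ (hence of $x_{12}^2+\varpi^k x_2$), and using the defining relation $q^{-2l}|d_E|=q^{-2-2j}|x_{12}^2+\varpi^k x_2|$ to decide, in each piece, whether $\beta_E(l,l+m)$ is locally constant (when $x_{12}^2+\varpi^k x_2$ is a unit) or descends to a smaller $\Omega_E$ (when a power of $\varpi$ can be pulled out). The parameter $m$ plays no role in the relation determining $l$, so it is a mere spectator throughout.

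For $k\ge 1$ I would argue as follows. If $x_{12}\in\fo^\times$, then $x_{12}^2+\varpi^k x_2\equiv x_{12}^2\pmod{\varpi}$ is a nonzero square in $\F_q$, so Hensel's lemma gives $x_{12}^2+\varpi^k x_2\in(\fo^\times)^2$; membership in $d_E(F^\times)^2$ then forces $d_E\in(\fo^\times)^2$, that is, the factor $\Delta_{E,1}$, and the relation gives $l=1+j$, so $\beta_E(l,l+m)=\beta_E(1+j,1+j+m)$ and $|x_{12}^2+\varpi^k x_2|^{s-2}=1$, while the region $\fo^\times\times\fo^\times$ has measure $(1-q^{-1})^2$. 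If instead $x_{12}\in\varpi\fo$, substitute $x_{12}=\varpi x_{12}'$ with $x_{12}'\in\fo$: then $x_{12}^2+\varpi^k x_2=\varpi^2((x_{12}')^2+\varpi^{k-2}x_2)$, the size factor becomes $q^{-2(s-2)}$, the Jacobian contributes $q^{-1}$, and---the crucial point---the relation for $l$ becomes $q^{-2l}|d_E|=q^{-2-2(j+1)}|(x_{12}')^2+\varpi^{k-2}x_2|$, so the remaining integral is exactly $\Omega_E(\pi,s,k-2,j+1,m)$; since $q^{-2(s-2)}q^{-1}=q^{-2s+3}$, this gives the stated recursion. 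For $k=-1$, the key observation is that $|\varpi^{-1}x_2|=q>1\ge|x_{12}|^2$, so $x_{12}^2+\varpi^{-1}x_2=\varpi^{-1}(x_2+\varpi x_{12}^2)$ has valuation exactly $-1$; being odd, this forces $d_E\in\varpi\fo^\times$ ($\Delta_{E,3}$), whence $l=j$, $\beta_E$ is constant on the domain, and $|x_{12}^2+\varpi^{-1}x_2|^{s-2}=q^{s-2}$. Writing $d_E=\varpi e$ with $e\in\fo^\times$, the constraint becomes $x_2+\varpi x_{12}^2\in e(\fo^\times)^2$; since $x_2\mapsto x_2+\varpi x_{12}^2$ is a measure-preserving bijection of $\fo^\times$ for each fixed $x_{12}$ and $(\fo^\times)^2$ has index $2$ in $\fo^\times$ (as $q$ is odd), the domain has measure $\tfrac12(1-q^{-1})$, which gives the formula.

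For $k=0$ I would split $x_{12}\in\fo^\times$ into the subcase where $x_{12}^2+x_2$ is a unit and the subcase $x_2\equiv-x_{12}^2\pmod{\varpi}$ (so $x_{12}^2+x_2\in\varpi\fo$), and handle $x_{12}\in\varpi\fo$ separately. In the subcase $x_2=-x_{12}^2+\varpi y$, one has $x_{12}^2+x_2=\varpi y$; changing variables from $x_2$ to $y\in\fo$ (Jacobian $q^{-1}$) turns the relation into $q^{-2l}|d_E|=q^{-3-2j}|y|$, and integrating $x_{12}$ over $\fo^\times$ produces exactly $(1-q^{-1})\,q^{-s+1}$ times the integral written in the lemma. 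The unit subcase together with $x_{12}\in\varpi\fo$ both force $d_E\in\fo^\times$ (hence $\Delta_{E,1}+\Delta_{E,2}$), give $l=1+j$, and their combined measure is computed from a residue count over $\F_q$: the contribution of $\{(x_{12},x_2)\in(\fo^\times)^2:x_{12}^2+x_2\text{ is a unit in the quadratic class of }d_E\}$ reduces mod $\varpi$ to counting $\{(a,b)\in(\F_q^\times)^2:a^2+b\text{ lies in that class}\}$, while the contribution of $x_{12}\in\varpi\fo$ (where $x_{12}^2+x_2\equiv x_2\pmod{\varpi^2}$ is a unit lying in the class of $d_E$ iff $x_2$ does) supplies the remaining piece; assembling these yields the factor $\tfrac12(1-q^{-1})\{1-(1+\eta(\varpi))q^{-1}\}$.

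The main obstacle will be the bookkeeping in the $k=0$ case: getting the residue count right---in particular the fact that when $d_E\in(\fo^\times)^2$ one must excise the residue $a^2$ (which is a square), whereas when $d_E$ is inert one does not, this being precisely where $\eta(\varpi)$ enters---and merging the ``unit'' contributions from $x_{12}\in\fo^\times$ and from $x_{12}\in\varpi\fo$ into the single clean factor $1-(1+\eta(\varpi))q^{-1}$, all while keeping the measure normalizations ($\vol(\fo)=1$, $\vol(\fo^\times)=1-q^{-1}$) and Jacobians consistent. The cases $k=-1$ and $k\ge 1$ are, by contrast, essentially forced once the valuation of $x_{12}^2+\varpi^k x_2$ is understood.
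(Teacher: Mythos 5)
The paper states this lemma without proof (it falls under the later remark that ``what remains to be done is just routine computation which we do by dividing into cases''), so there is no written argument in the paper to compare against. Your blind reconstruction is correct and fills that gap. The decomposition by the valuation of $x_{12}$ is the natural one, the Hensel/square-class bookkeeping is right (for $k\ge 1$ with $x_{12}\in\fo^\times$ the sum is automatically a unit square, forcing $\Delta_{E,1}$; for $k=-1$ the valuation is forced to be $-1$, forcing $\Delta_{E,3}$, with the translate $x_2\mapsto x_2+\varpi x_{12}^2$ giving measure $\tfrac12(1-q^{-1})$), and the recursion factor $q^{-2s+3}=q^{-1}\cdot q^{-2(s-2)}$ from the substitution $x_{12}=\varpi x_{12}'$ together with the shift $j\mapsto j+1$ is exactly right.

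For the $k=0$ case your sketch is correct but you leave the final residue count unassembled. It is worth recording it explicitly, since this is where the $\eta(\varpi)$-dependence arises. The $x_{12}\in\varpi\fo$ piece contributes measure $q^{-1}\cdot\tfrac12(1-q^{-1})=\tfrac12 q^{-2}(q-1)$ for both $\Delta_{E,1}$ and $\Delta_{E,2}$. For $x_{12}\in\fo^\times$ with $x_{12}^2+x_2\in\fo^\times$, fixing $\bar x_{12}\in\F_q^\times$ the count of admissible residues $\bar x_2\in\F_q^\times$ is $\tfrac{q-1}{2}-1$ in the $\Delta_{E,1}$ case (excise $\bar x_{12}^2$, a square) and $\tfrac{q-1}{2}$ in the $\Delta_{E,2}$ case (nothing to excise), giving measures $q^{-2}(q-1)\tfrac{q-3}{2}$ and $q^{-2}(q-1)\tfrac{q-1}{2}$ respectively. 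Summing: $q^{-2}(q-1)\tfrac{q-2}{2}=\tfrac12(1-q^{-1})(1-2q^{-1})$ when $\eta(\varpi)=1$, and $q^{-2}(q-1)\tfrac{q}{2}=\tfrac12(1-q^{-1})$ when $\eta(\varpi)=-1$, which is precisely $\tfrac12(1-q^{-1})\{1-(1+\eta(\varpi))q^{-1}\}$. With that verification added, the proof is complete.
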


\begin{lem}\label{lem:04092}
For $k\geq 1$ we have
    \begin{multline*}
    \Xi_E(\pi,s,k,j)=(1-q^{-1}) \beta_E(0,1+j)\,  \Delta_{E,1} \\
    + q^{-2s+3}\Omega_E(\pi,s,k-2,0,j+1) + q^{-2s+2}\Xi_E(\pi,s,k-1,j+1).
    \end{multline*}
In addition,
    \[
    \Xi_E(\pi,s,0,j)=\int_{\fo\cap d_E(F^\times)^2} \beta_E(l,l+j+1)\, |x|^{s-2} \, \d x
    \]
where $l$ is determined by $q^{-2l}|d_E|=|x|$. 
\end{lem}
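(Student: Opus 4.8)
The plan is to prove both assertions by the Igusa-type recursion that already underlies \eqref{eq:04092} and Lemma \ref{lem:04091}: partition the domain $\fo\oplus\fo$ of the $(x_{12},x_1)$-integral defining $\Xi_E(\pi,s,k,j)$ according to the valuations of $x_{12}$ and $x_1$, rescale on each piece, and identify what results with $\Omega_E$ or $\Xi_E$ carrying a strictly smaller first parameter.

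For the recursion ($k\ge 1$) I would use the three disjoint pieces $\fo^\times\oplus\fo$, $\varpi\fo\oplus\fo^\times$ and $\varpi\fo\oplus\varpi\fo$. On $\fo^\times\oplus\fo$, since $k\ge 1$ the element $x_{12}^2+\varpi^kx_1$ is a unit congruent to $x_{12}^2$ modulo $\varpi$, hence a square in $\fo^\times$ by Hensel's lemma (recall $2\in\fo^\times$); so the constraint $x_{12}^2+\varpi^kx_1\in d_E(F^\times)^2$ holds exactly when $d_E\in(\fo^\times)^2$, i.e.\ $\Delta_{E,1}=1$, and then $\max(|x_1|,|x_{12}|,q^{-k})=1$ forces $l_1=0$ while $q^{-2l_2}|d_E|=q^{-2-2j}$ forces $l_2=1+j$; the integrand is the constant $\beta_E(0,1+j)$ on a set of volume $1-q^{-1}$, which yields the term $(1-q^{-1})\beta_E(0,1+j)\Delta_{E,1}$. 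On $\varpi\fo\oplus\fo^\times$ I substitute $x_{12}=\varpi y$ and factor $x_{12}^2+\varpi^kx_1=\varpi^2(y^2+\varpi^{k-2}x_1)$, the exponent $k-2$ being $-1$ when $k=1$ — which is exactly why $\Omega_E$ is defined with first parameter $-1$; the Jacobian $q^{-1}$ and the factor $|\varpi^2|^{s-2}=q^{-2s+4}$ combine to $q^{-2s+3}$, and one checks that the defining relations for $(l_1,l_2)$ and the square-class condition transform into the data of $\Omega_E(\pi,s,k-2,0,j+1)$ (since $\varpi^2\in(F^\times)^2$), so this piece equals $q^{-2s+3}\Omega_E(\pi,s,k-2,0,j+1)$. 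On $\varpi\fo\oplus\varpi\fo$ the substitution $x_{12}=\varpi y$, $x_1=\varpi z$ gives $x_{12}^2+\varpi^kx_1=\varpi^2(y^2+\varpi^{k-1}z)$; the Jacobian $q^{-2}$ and $|\varpi^2|^{s-2}=q^{-2s+4}$ combine to $q^{-2s+2}$, and the parameters — including the maxima $\max(|x_1|,|x_{12}|,q^{-k})$ in the denominators and the shift $j\mapsto j+1$ — match those of $\Xi_E(\pi,s,k-1,j+1)$. Adding the three contributions gives the claimed recursion.

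For the base case $k=0$, one has $\max(|x_1|,|x_{12}|,1)=1$, so $q^{-2l_1}|d_E|=|x_{12}^2+x_1|$ and $q^{-2l_2}|d_E|=q^{-2-2j}|x_{12}^2+x_1|$, whence $l_2=l_1+1+j$; for each fixed $x_{12}$ the map $x_1\mapsto x:=x_{12}^2+x_1$ is a measure-preserving bijection of $\fo$ onto itself, so $\Xi_E(\pi,s,0,j)=\int_{\fo}\bigl(\int_{\fo\cap d_E(F^\times)^2}\beta_E(l,l+j+1)\,|x|^{s-2}\,\d x\bigr)\,\d x_{12}$ with $q^{-2l}|d_E|=|x|$; the inner integral does not depend on $x_{12}$ and $\vol(\fo)=1$, which is the asserted formula.

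The main obstacle is the bookkeeping in the recursion step: for each rescaling one must check that the two relations defining $(l_1,l_2)$ — in particular the maxima in the denominators — pass exactly to the defining relations of the target integral, and that the constraint $\,\cdot\in d_E(F^\times)^2$ is preserved, including the boundary case $k=1$ (where $\Omega_E$ is evaluated at first parameter $-1$) and the halving of the effective integration domain when $d_E\in\varpi\fo^\times$ — a phenomenon invisible in the recursion itself, but absorbed into Lemma \ref{lem:04091}'s evaluation of $\Omega_E(\pi,s,-1,j,m)$. None of these individual verifications is deep, but there are enough small cases that they have to be carried out with care.
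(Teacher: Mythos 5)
The paper states Lemma \ref{lem:04092} without proof, as a routine computation in the vein of \eqref{eq:04092} and Lemma \ref{lem:04091}. Your argument supplies exactly the natural Igusa-type recursion one expects — the three-piece slicing of $\fo\oplus\fo$ by valuations, the Hensel's-lemma identification of the unit-square stratum (giving the $\Delta_{E,1}$ term), and the rescalings producing $q^{-2s+3}\Omega_E(\pi,s,k-2,0,j+1)$ and $q^{-2s+2}\Xi_E(\pi,s,k-1,j+1)$ with the correct shift $j\mapsto j+1$ coming from the extra $\varpi^2$ absorbed into the $l_2$-defining relation. The verifications of the $\max(\cdot,\cdot,\cdot)$ bookkeeping and of the base case $k=0$ via the shear $x_1\mapsto x=x_{12}^2+x_1$ are correct, and the boundary case $k=1$ (first parameter $-1$ in $\Omega_E$) is correctly handled by the domain of definition of $\Omega_E$. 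The proof is correct and is essentially the argument the authors left implicit.
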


By \eqref{eq:04091}, \eqref{eq:04092} and Lemmas \ref{lem:04091} and \ref{lem:04092}, we see that the proof of \eqref{eq:Zx1} reduces to the following case by case computation.

\subsubsection{The case $d_E\in\varpi\fo^\times$ ($E$ is ramified over $F$)}
In this case, \eqref{eq:04092} becomes
    \[
    \frac{1-q^{-2s+1}}{1-q^{-1}}\, \tZ_E(\pi,s,k) = q^{-2s+3} \Omega_E(\pi,s,k-1,0,0)+2q^{-2s+2}\Xi_E(\pi,s,k,0)  .
    \]
By Lemmas \ref{lem:04091} and \ref{lem:04092}, we have
    \begin{equation}\label{eq:must}
    \frac{1-q^{-2s+1}}{1-q^{-1}}\, \tZ_E(\pi,s,k) =\mathcal{A}_1(k) + \mathcal{A}_2(k),
    \end{equation}
where
    \[
    \mathcal{A}_1(k)\coloneqq q^{-2s+3} \Omega_E(\pi,s,k-1,0,0)  + 2q^{-2s+3}\sum_{u=1}^k \Omega_E(\pi,s,k-u-1,0,u) \, q^{u(-2s+2)}
    \]
and
    \[
    \mathcal{A}_2(k)\coloneqq  q^{-3s+3}(1-q^{-1})\,  q^{k(-2s+2)}  \sum_{u=0}^\inf q^{u(-2s+2)}\beta_E(u,u+k+1).
    \]
It is easy to see that
    \begin{equation}\label{eq:a21113}
    q^{-2s} \sum_{k=0}^\inf q^{-k} \mathcal{A}_2(k)=   \frac{L(2s-1,\pi,\mathrm{Ad})}{ L(1,\trep) \, N(\ff_\eta)^{s-1}} \{ -q^{-4s+1}+q^{-4s+1}\lambda -q^{-6s+2}  \}
    \end{equation}
by Proposition \ref{prop:412x}.
Hence, the remaining task in this case is to compute the sum $q^{-2s} \sum_{k=0}^\inf q^{-k} \mathcal{A}_1(k)$.

It follows from Lemma \ref{lem:04091} that
    \[
    \Omega_E(\pi,s,2k-1,0,m)=\frac{1}{2}q^{(-2s+3)k}q^{s-2}(1-q^{-1})\, \beta_E(2k,2k+m),
    \]
    \begin{multline*}
    \Omega_E(\pi,s,2k,0,m)= \\
    \frac{1}{2}q^{(-2s+3)k}q^{-s+1}(1-q^{-1})^2 \sum_{u=0}^\inf q^{-2u(s-1)} \beta_E(2k+u+1,2k+u+1+m).    
    \end{multline*}
Hence,
    \begin{multline*}
    \mathcal{A}_1(2k)=\frac{1}{2}q^{-s+1}(1-q^{-1})\, q^{k(-2s+3)}  \beta_E(k,k) \\
    + (1-q^{-1})\,q^{-s+1}\sum_{u=0}^{k-1} \beta_E(u , -u+2k) \, q^{u(2s-1)}q^{2k(-2s+2)} \\
    + (1-q^{-1})^2\,q^{-s+2} \sum_{u=0}^{k-1} \sum_{m=0}^\inf  q^{u(2s-1)}q^{m(-2s+2)} q^{2k(-2s+2)} \beta_E(u+m+1,-u+2k+m)
    \end{multline*}
and
    \begin{multline*}
    \mathcal{A}_1(2k+1)=\frac{1}{2}q^{(-2s+3)k}q^{-s+2}(1-q^{-1})^2 \sum_{u=1}^\inf q^{u(-2s+2)} \beta_E(k+u,k+u) \\
    + q^{-s+2}(1-q^{-1})^2\sum_{u=0}^{k-1} \sum_{m=0}^\inf q^{(2s-1)u} q^{m(-2s+2)} q^{(2k+1)(-2s+2)} \\
    \hspace{5cm} \times \beta_E(u+m+1,-u+2k+m+1) \\
    + q^{-s+1}(1-q^{-1})\sum_{u=0}^{k}  q^{u(2s-1)} q^{(2k+1)(-2s+2)}\, \beta_E(u,-u+2k+1) .
    \end{multline*}
From this, we obtain
    \begin{equation}\label{eq:20191115a}
    q^{-2s} \sum_{k=0}^\inf q^{-k} \mathcal{A}_1(k)= (1-q^{-1}) \, q^{-3s+1}( I_1 + I_2 + I_3 + I_4 ),
    \end{equation}
where
    \begin{equation*}\label{eq:a3l1}
    I_1 \coloneq
    \frac{1}{2} \sum_{k=0}^\inf q^{k(-2s+1)} \beta_E(k,k) = \frac{1}{2}B_0(q^{-2s+1}) ,
    \end{equation*}
    \begin{equation*}\label{eq:a3l3}
    I_2 \coloneq
    \sum_{l_1=1}^\inf  \sum_{l_2=0}^\inf q^{l_2(-2s+1)}q^{l_1(-2s+1)} \beta_E(l_2 , l_1+l_2)=q^{-2s+1}U_0( q^{-2s+1},q^{-2s+1}),
    \end{equation*}
    \begin{equation*}\label{eq:a3l2}
    I_3 \coloneq
    \frac{1}{2}(1-q^{-1}) \sum_{k=0}^\inf   \sum_{u=1}^\inf q^{k(-2s+1)} q^{u(-2s+2)} \beta_E(k+u,k+u)
    \end{equation*}
and
    \begin{equation*}\label{eq:a3l4}
    I_4 \coloneq
    (q-1)\sum_{l_1=2}^\inf \sum_{l_2=0}^\inf \sum_{m=0}^\inf  q^{l_2(-2s+1)} q^{l_1(-2s+1)} q^{m(-2s+2)} \beta_E(l_2+m+1,l_1+l_2+m). 
    \end{equation*}
The first three terms $I_1, I_2$ and $I_3$ can be computed by Proposition \ref{prop:410x}, Proposition \ref{prop:412x}, Lemma \ref{lem:20191230} respectively.
It follows from Lemmas \ref{lem:relation}, \ref{lem:20191230} and \ref{lem:0409l} that
    \begin{multline*}
    I_4= 
    - \frac{ L(2s-\frac{1}{2},\pi)\, (1-q^{-1})\, q^{-4s+2}} {(1+q^{-2s+1})(1+q^{-2s})}\Big\{ q^{-1}\lambda B_0(q^{-2s+1})  - q^{-1} \Big\} \\
    +  (1+q^{-2s+1})^{-1}\, L(2s-\tfrac{1}{2},\pi)\, (1-q^{-1})\, q^{-4s+3} (q^{-1}\lambda-q^{-2s}-q^{-4s+1})L(2s-1,\pi,\mathrm{Ad}) \\
    \times \Big\{  B_0(q^{-2s+1})(q^{-4s+1}+q^{2s}+q^{2s-1}+1-q^{-1}\lambda^2) -q^{2s}-q^{2s-1} +2q^{-1}\lambda-2\Big\} .
    \end{multline*}
Therefore, the right hand side of \eqref{eq:20191115a} becomes
    \[
     \frac{L(2s-1,\pi,\mathrm{Ad}) }{2\,L(1,\trep)\, N(\ff_\eta)^{s-1}} q^{-2s}(1+q^{-2s+1})(1-q^{-2s}).
    \]
Together with \eqref{eq:04091}, \eqref{eq:must} and \eqref{eq:a21113}, we obtain the equation \eqref{eq:Zx1} in the case of $d_E\in\varpi\fo^\times$.

\subsubsection{The case $d_E \in\fo^\times\setminus (\fo^\times)^2$ ($E$ is unramified over $F$)}
Note that Lemma \ref{lem:04091} implies $\Omega_E(\pi,s,k,j,m)=0$ when $k$ is odd.
By \eqref{eq:04092} and Lemma \ref{lem:04092} we have
    \[
    \frac{1-q^{-2s+1}}{1-q^{-1}}\, \tZ_E(\pi,s,2k+1) = q^{-2s+3} \Omega_E(\pi,s,2k,0,0)+2(q^{-2s+2})^2 \Xi_E(\pi,s,2k,1)  
    \]
and
    \[
    \frac{1-q^{-2s+1}}{1-q^{-1}}\, \tZ_E(\pi,s,2k)  =   2 q^{-2s+2} \Xi_E(\pi,s,2k ,0)  .
    \]
It follows from Lemma \ref{lem:04091} that
    \begin{multline*}
    \Omega_E(\pi,s,2k,0,m)= \frac{1}{2}q^{-1}(1-q^{-1}) q^{(-2s+3)k} \beta_E(k+1, k+1+m)  \\
    +\frac{1}{2}(1-q^{-1})^2 q^{(-2s+3)k} \sum_{u=0}^\inf q^{-2u(s-1)} \beta_E(k+u+1, k+u+1+m).
    \end{multline*}
Together with Lemma \ref{lem:20191230}, we deduce
    \begin{multline}\label{eq:20191230e1}
    \sum_{k=0}^\inf q^{-2k-1}q^{-2s+3} \Omega_E(\pi,s,2k,0,0)= \frac{1}{2}(1-q^{-1})q^{-2s+1}B_1(q^{-2s+1})  \\
    +\frac{1}{2}(1-q^{-1})^2 q^{-2s+2}\times L(2s-1,\pi,\mathrm{Ad}) \\
    \times \Big\{  B_0(q^{-2s+1})(q^{-4s+1}+q^{2s}+q^{2s-1}+1-q^{-1}\lambda^2) \\
    -q^{2s}-q^{2s-1}-1 +q^{-1}\lambda^2 -q^{-1}\lambda^2(1+q^{-1})^{-2} \Big\} .
    \end{multline}
We write $J_1$ for the right hand side of \eqref{eq:20191230e1}.
By Lemma \ref{lem:04092}, 
    \begin{multline}\label{eq:20191230e2}
    \sum_{k=0}^\inf q^{-2k} \cdot 2q^{-2s+2} \Xi_E(\pi,s, 2k, 0) + \sum_{k=0}^\inf q^{-2k-1} \cdot 2q^{-4s+4} \Xi_E(\pi,s, 2k, 1) \\
    =  q^{-1}(1-q^{-1}) q^{-2s+2} \sum_{k=1}^\inf \sum_{l=0}^\inf q^{(-2s+1)k} q^{(-2s+1)l} \beta_E(k,k+l+1)\\
    + (1-q^{-1})^2 q^{-2s+2} \sum_{k=1}^\inf \sum_{l=0}^\inf \sum_{u=0}^\inf q^{(-2s+1)k} q^{(-2s+1)l} q^{(-2s+2)u}\beta_E(k+u,k+l+u+1) \\
    + (1-q^{-1}) q^{-2s+2} \sum_{k=0}^\inf \sum_{u=0}^\inf q^{(-2s+1)k}  q^{(-2s+2)u}\beta_E(u,u+k+1).
    \end{multline}
It follows from Lemmas \ref{lem:20191230} and \ref{lem:0409l} that the right hand side of \eqref{eq:20191230e2} equals
    \begin{multline}\label{eq:extra}
    J_2 \coloneq
    q^{-1}(1-q^{-1}) q^{-2s+2}U_1(q^{-2s+1},q^{-2s+1}) \\
    +(1-q^{-1})^2 (1+q^{-2s+1})^{-1} q^{-4s+3} (q^{-1}\lambda-q^{-2s}-q^{-4s+1})\, L(2s-\frac{1}{2},\pi)\, L(2s-1,\pi,\mathrm{Ad}) \\
    \times \Big\{  B_0(q^{-2s+1})(q^{-4s+1}+q^{2s}+q^{2s-1}+1-q^{-1}\lambda^2)\\ 
    \hspace{2cm} -q^{2s}-q^{2s-1}-1 +q^{-1}\lambda^2 -q^{-1}\lambda^2(1+q^{-1})^{-2} \Big\} \\
    - (1-q^{-1})^2 (1+q^{-2s+1})^{-1}L(2s-\frac{1}{2},\pi)\, q^{-4s+2} C_0(q^{-2s+1}) \\
    + (1-q^{-1}) q^{-2s+2} U_0(q^{-2s+1},q^{-2s+2}).
    \end{multline}
From \eqref{eq:20191230e1} and \eqref{eq:extra}, we obtain
    \begin{multline*}\label{eq:2020.1.1}
    \frac{1-q^{-2s+1}}{1-q^{-1}} \sum_{k=0}^\inf q^{-k} \tZ_E(\pi,s,k)= J_1 + J_2 \\
    =  \frac{1}{2} q^{-2s+1}(1-q^{-1})(1+q^{-1})^{-2}L(2s-1,\pi,\mathrm{Ad}) \times (-q^{-2s+1} - 2 q^{-2s} - q^{-2s-1} - q^{-4s+2} \\
    - 2 q^{-4s+1} - q^{-4s} + 2 \lambda +  2 q^{-1} \lambda - 2 q^{-2s} \lambda - 2 q^{-2s-1} \lambda + q^{-1} \lambda^2 +  q^{-2s} \lambda^2).
    \end{multline*}
This completes the proof of the equation \eqref{eq:Zx1} in the case of $d_E \in\fo^\times\setminus (\fo^\times)^2$.

\subsubsection{The case $d_E\in  (\fo^\times)^2$ ($E=F\oplus F$)}
In this case, \eqref{eq:04092} becomes
    \[
    \frac{1-q^{-2s+1}}{1-q^{-1}}\, \tZ_E(\pi,s,k) = 1 +  q^{-2s+3} \Omega_E(\pi,s,k-1,0,0)+2q^{-2s+2}\Xi_E(\pi,s,k,0)  .
    \]
By Lemma \ref{lem:04091} we have
    \[
    \Omega_E(\pi,s,2k-1,0,m)= (1-q^{-1})^2 \sum_{t=1}^{k}q^{(-2s+3)(t-1)} \beta_E(t, t+m) 
    \]
and
    \begin{multline*}
    \Omega_E(\pi,s,2k,0,m)=  (1-q^{-1})^2 \sum_{t=1}^{k}q^{(-2s+3)(t-1)} \beta_E(t, t+m) \\
    -\frac{1}{2}q^{-1}(1-q^{-1}) q^{(-2s+3)k} \beta_E(k+1, k+1+m)  \\
    +\frac{1}{2}(1-q^{-1})^2 q^{(-2s+3)k} \sum_{u=0}^\inf q^{-2u(s-1)} \beta_E(k+u+1, k+u+1+m).
    \end{multline*}
Since 
    \[
    \sum_{m=2}^\inf q^{-m}   \sum_{t=1}^{[m/2]}q^{(-2s+3)(t-1)} \beta_E(t, t)= (1-q^{-1})^{-1}  q^{-2}\sum_{t=0}^{\inf}q^{(-2s+1)t} \beta_E(t+1, t+1),
    \]
we obtain 
    \begin{multline}\label{eq:20200104e1}
    \sum_{m=0}^\inf q^{-m} q^{-2s+3} \Omega_E(\pi,s,m-1,0,0) = \\
    \frac{1}{2}(1-q^{-1})q^{-2s+1}\sum_{k=0}^\inf q^{(-2s+1)k} \beta_E(k+1, k+1)  \\
    +\frac{1}{2}(1-q^{-1})^2 q^{-2s+2}\sum_{k=0}^\inf \sum_{u=0}^\inf q^{(-2s+1)k}q^{(-2s+2)u}\beta_E(k+u+1,k+u+1)
    \end{multline}
By Lemma \ref{lem:20191230}, the right hand side of \eqref{eq:20200104e1} equals
    \begin{multline*}
     K_1 \coloneq \frac{1}{2}(1-q^{-1})q^{-2s+1}B_1(q^{-2s+1})  +\frac{1}{2}(1-q^{-1})^2 q^{-2s+2}\times L(2s-1,\pi,\mathrm{Ad}) \\
    \times \Big\{  B_0(q^{-2s+1})(q^{-4s+1}+q^{2s}+q^{2s-1}+1-q^{-1}\lambda^2) \\
    -q^{2s}-q^{2s-1}-1 +q^{-1}\lambda^2 -q^{-1}(\lambda-2)^2(1-q^{-1})^{-2} \Big\} .
    \end{multline*}
It follows from Lemma \ref{lem:04092} that 
    \[
    \Xi_E(\pi,s,0,j)=\frac{1}{2}(1-q^{-1})\sum_{u=0}^\inf q^{u(-2s+2)} \beta_E(u , u+j+1),
    \]
and we get for $k\geq 1$
    \begin{multline*}
    \Xi_E(\pi,s,k , j)= (1-q^{-1})\sum_{t=1}^{k} q^{(-2s+2)(t-1)} \beta_E(0,j+t) \\
    + q^{-2s+3}\sum_{l=1}^{k} q^{(-2s+2)(l-1)}\Omega_E(\pi,s,k-l-1 , 0, j+l)+  q^{(-2s+2)k} \Xi_E(\pi,s,0, j+k) .
    \end{multline*}
Therefore,
    \begin{multline}\label{eq:20200104e2}
    2q^{-2s+2}\sum_{m=0}^\inf  q^{-m}  \Xi_E(\pi,s,m,0) 
    = 2q^{-2s+1}\sum_{t=0}^\inf q^{(-2s+1)t}\beta_E(t+1) \\
    +q^{-1}(1-q^{-1}) q^{-2s+2} \sum_{k=1}^\inf \sum_{l=0}^\inf q^{(-2s+1)k} q^{(-2s+1)l} \beta_E(k,k+l+1)\\
    + (1-q^{-1})^2 q^{-2s+2} \sum_{k=1}^\inf \sum_{l=0}^\inf \sum_{u=0}^\inf q^{(-2s+1)k} q^{(-2s+1)l} q^{(-2s+2)u}\beta_E(k+u,k+l+u+1) \\
    + (1-q^{-1}) q^{-2s+2} \sum_{k=0}^\inf \sum_{u=0}^\inf q^{(-2s+1)k}  q^{(-2s+2)u}\beta_E(u,u+k+1).
    \end{multline}
By Lemmas \ref{lem:20191230} and \ref{lem:0409l}, the right hand side of \eqref{eq:20200104e2} equals
    \begin{multline*}
    K_2 \coloneq 2q^{-2s+1} A_1(q^{-2s+1}) +  q^{-1}(1-q^{-1}) q^{-2s+2}U_1(q^{-2s+1},q^{-2s+1}) \\
    +(1-q^{-1})^2 (1+q^{-2s+1})^{-1} q^{-4s+3} (q^{-1}\lambda-q^{-2s}-q^{-4s+1})\, L(2s-\frac{1}{2},\pi)\, L(2s-1,\pi,\mathrm{Ad}) \\
    \times \Big\{  B_0(q^{-2s+1})(q^{-4s+1}+q^{2s}+q^{2s-1}+1-q^{-1}\lambda^2) \\
    \hspace{2cm} -q^{2s}-q^{2s-1}-1 +q^{-1}\lambda^2 -q^{-1}(\lambda-2)^2(1-q^{-1})^{-2} \Big\} \\
    - (1-q^{-1})^2 (1+q^{-2s+1})^{-1}L(2s-\frac{1}{2},\pi)\, q^{-4s+2} C_0(q^{-2s+1}) \\
    + (1-q^{-1}) q^{-2s+2} U_0(q^{-2s+1},q^{-2s+2}).
    \end{multline*}
Thus, we obtain
    \begin{multline*}
    \frac{1-q^{-2s+1}}{1-q^{-1}} \sum_{k=0}^\inf q^{-k} \tZ_E(\pi,s,k)= (1-q^{-1})^{-1} + K_1 + K_2 \\
    =  \frac{1}{2} (1-q^{-1})^{-1}L(2s-1,\pi,\mathrm{Ad}) \times (2 - 2 q^{-2s+1} - 3 q^{-4s+2} - 2 q^{-4s+1} - q^{-4s} + q^{-6s+3}  \\ 
    - 2 q^{-6s+2} - q^{-6s+1} +  2 q^{-2s+1} \lambda - 2 q^{-2s} \lambda + 6 q^{-4s+1} \lambda +  2 q^{-4s} \lambda - q^{-2s} \lambda^2 - q^{-4s+1} \lambda^2).
    \end{multline*}
This completes the proof of the equation \eqref{eq:Zx1} in the case of $d_E\in(\fo^\times)^2$.

\begin{bibdiv}
\begin{biblist}

\bib{BM07}{article}{
   author={Baruch, Ehud Moshe},
   author={Mao, Zhengyu},
   title={Central value of automorphic $L$-functions},
   journal={Geom. Funct. Anal.},
   volume={17},
   date={2007},
   number={2},
   pages={333--384},
   issn={1016-443X},
   review={\MR{2322488}},
   doi={10.1007/s00039-007-0601-3},
}

\bib{BM10}{article}{
   author={Baruch, Ehud Moshe},
   author={Mao, Zhengyu},
   title={A generalized Kohnen-Zagier formula for Maass forms},
   journal={J. Lond. Math. Soc. (2)},
   volume={82},
   date={2010},
   number={1},
   pages={1--16},
   issn={0024-6107},
   review={\MR{2669637}},
   doi={10.1112/jlms/jdq009},
}

\bib{Bhargava}{article}{
   author={Bhargava, Manjul},
   title={Higher composition laws. I. A new view on Gauss composition, and
   quadratic generalizations},
   journal={Ann. of Math. (2)},
   volume={159},
   date={2004},
   number={1},
   pages={217--250},
   issn={0003-486X},
   review={\MR{2051392}},
   doi={10.4007/annals.2004.159.217},
}

\bib{BB}{article}{
   author={Blomer, Valentin},
   author={Brumley, Farrell},
   title={On the Ramanujan conjecture over number fields},
   journal={Ann. of Math. (2)},
   volume={174},
   date={2011},
   number={1},
   pages={581--605},
   issn={0003-486X},
   review={\MR{2811610}},
   doi={10.4007/annals.2011.174.1.18},
}

\bib{BS}{article}{
   author={B\"ocherer, Siegfried},
   author={Schulze-Pillot, Rainer},
   title={The Dirichlet series of Koecher and Maass and modular forms of
   weight $\frac32$},
   journal={Math. Z.},
   volume={209},
   date={1992},
   number={2},
   pages={273--287},
   issn={0025-5874},
   review={\MR{1147818}},
   doi={10.1007/BF02570834},
   }

\bib{BR}{article}{
   author={Bopp, Nicole},
   author={Rubenthaler, Hubert},
   title={Local zeta functions attached to the minimal spherical series for
   a class of symmetric spaces},
   journal={Mem. Amer. Math. Soc.},
   volume={174},
   date={2005},
   number={821},
   pages={viii+233},
   issn={0065-9266},
   review={\MR{2115051}},
   doi={10.1090/memo/0821},
}

\bib{BFF}{article}{
   author={Bump, Daniel},
   author={Friedberg, Solomon},
   author={Furusawa, Masaaki},
   title={Explicit formulas for the Waldspurger and Bessel models},
   journal={Israel J. Math.},
   volume={102},
   date={1997},
   pages={125--177},
   issn={0021-2172},
   review={\MR{1489103}},
   doi={10.1007/BF02773797},
}

\bib{FH}{article}{
   author={Friedberg, Solomon},
   author={Hoffstein, Jeffrey},
   title={Nonvanishing theorems for automorphic $L$-functions on $\GL(2)$},
   journal={Ann. Math.},
   volume={142},
   date={1995},
   number={2},
   pages={385--423},
   issn={0003-486X},
   review={\MR{1343325}},
   doi={10.2307/2118638},
}


\bib{FHW}{article}{
   author={Finis, Tobias},
   author={Hoffmann, Werner},
   author={Wakatsuki, Satoshi},
   title={The subregular unipotent contribution to the geometric side of the
   Arthur trace formula for the split exceptional group $G_2$},
   conference={
      title={Geometric aspects of the trace formula},
   },
   book={
      series={Simons Symp.},
      publisher={Springer, Cham},
   },
   date={2018},
   pages={163--182},
   review={\MR{3969874}},
}

\bib{GI}{article}{
   author={Gan, Wee Teck},
   author={Ichino, Atsushi},
   title={The Shimura-Waldspurger correspondence for ${\rm Mp}_{2n}$},
   journal={Ann. Math.},
   volume={188},
   date={2018},
   number={3},
   pages={965--1016},
   issn={0003-486X},
   review={\MR{3866889}},
   doi={10.4007/annals.2018.188.3.5},
}

\bib{GS}{article}{
   author={Gan, Wee Teck},
   author={Savin, Gordan},
   title={Twisted Bhargava cubes},
   journal={Algebra Number Theory},
   volume={8},
   date={2014},
   number={8},
   pages={1913--1957},
   issn={1937-0652},
   review={\MR{3285619}},
   doi={10.2140/ant.2014.8.1913},
}

\bib{GJ}{book}{
   author={Godement, Roger},
   author={Jacquet, Herv\'{e}},
   title={Zeta functions of simple algebras},
   series={Lecture Notes in Mathematics, Vol. 260},
   publisher={Springer-Verlag, Berlin-New York},
   date={1972},
   pages={ix+188},
   review={\MR{0342495}},
}

\if0
\bib{GH}{book}{
   author={Goldfeld, Dorian},
   author={Hundley, Joseph},
   title={Automorphic representations and $L$-functions for the general
   linear group. Volume I},
   series={Cambridge Studies in Advanced Mathematics},
   volume={129},
   note={With exercises and a preface by Xander Faber},
   publisher={Cambridge University Press, Cambridge},
   date={2011},
   pages={xx+550},
   isbn={978-0-521-47423-8},
   review={\MR{2807433}},
   doi={10.1017/CBO9780511973628},
}
\fi

\bib{G96}{article}{
   author={Guo, Jiandong},
   title={On a generalization of a result of Waldspurger},
   journal={Canad. J. Math.},
   volume={48},
   date={1996},
   number={1},
   pages={105--142},
   issn={0008-414X},
   review={\MR{1382478}},
   doi={10.4153/CJM-1996-005-3},
}

\bib{G97}{article}{
   author={Guo, Jiandong},
   title={Uniqueness of generalized Waldspurger model for ${\rm GL}(2n)$},
   journal={Pacific J. Math.},
   volume={180},
   date={1997},
   number={2},
   pages={273--289},
   issn={0030-8730},
   review={\MR{1487565}},
   doi={10.2140/pjm.1997.180.273},
}

\bib{HW}{article}{
   author={Hoffmann, Werner},
   author={Wakatsuki, Satoshi},
   title={On the geometric side of the Arthur trace formula for the
   symplectic group of rank 2},
   journal={Mem. Amer. Math. Soc.},
   volume={255},
   date={2018},
   number={1224},
   pages={v+88},
   issn={0065-9266},
   isbn={978-1-4704-3102-0},
   isbn={978-1-4704-4825-7},
   review={\MR{3843151}},
   doi={10.1090/memo/1224},
}

\bib{IS}{article}{
   author={Ibukiyama, Tomoyoshi},
   author={Saito, Hiroshi},
   title={On zeta functions associated to symmetric matrices. I. An explicit
   form of zeta functions},
   journal={Amer. J. Math.},
   volume={117},
   date={1995},
   number={5},
   pages={1097--1155},
   issn={0002-9327},
   review={\MR{1350594}},
   doi={10.2307/2374973},
}

\bib{Igusa}{book}{
   author={Igusa, Jun-ichi},
   title={An introduction to the theory of local zeta functions},
   series={AMS/IP Studies in Advanced Mathematics},
   volume={14},
   publisher={American Mathematical Society, Providence, RI; International
   Press, Cambridge, MA},
   date={2000},
   pages={xii+232},
   isbn={0-8218-2015-X},
   review={\MR{1743467}},
}

\bib{KY}{article}{
   author={Kable, Anthony C.},
   author={Yukie, Akihiko},
   title={Prehomogeneous vector spaces and field extensions. II},
   journal={Invent. Math.},
   volume={130},
   date={1997},
   number={2},
   pages={315--344},
   issn={0020-9910},
   review={\MR{1474160}},
   doi={10.1007/s002220050187},
}

\bib{KY1}{article}{
   author={Kable, Anthony C.},
   author={Yukie, Akihiko},
   title={The mean value of the product of class numbers of paired quadratic
   fields. I},
   journal={Tohoku Math. J. (2)},
   volume={54},
   date={2002},
   number={4},
   pages={513--565},
   issn={0040-8735},
   review={\MR{1936267}},
}

\bib{KS}{article}{
   author={Katok, Svetlana},
   author={Sarnak, Peter},
   title={Heegner points, cycles and Maass forms},
   journal={Israel J. Math.},
   volume={84},
   date={1993},
   number={1-2},
   pages={193--227},
   issn={0021-2172},
   review={\MR{1244668}},
   doi={10.1007/BF02761700},
}

\bib{Kim}{article}{
   author={Kim, Henry H.},
   title={Functoriality for the exterior square of ${\rm GL}_4$ and the
   symmetric fourth of ${\rm GL}_2$},
   note={With appendix 1 by Dinakar Ramakrishnan and appendix 2 by Kim and
   Peter Sarnak},
   journal={J. Amer. Math. Soc.},
   volume={16},
   date={2003},
   number={1},
   pages={139--183},
   issn={0894-0347},
   review={\MR{1937203}},
   doi={10.1090/S0894-0347-02-00410-1},
}

\bib{KP}{article}{
   author={Keaton, Rodney},
   author={Pitale, Ameya},
   title={Restrictions of Eisenstein series and Rankin-Selberg convolution},
   journal={Doc. Math.},
   volume={24},
   date={2019},
   pages={1--45},
   issn={1431-0635},
   review={\MR{3935491}},
}

\bib{Kogiso}{article}{
   author={Kogiso, Takeyoshi},
   title={Simple calculation of the residues of the adelic zeta function
   associated with the space of binary cubic forms},
   journal={J. Number Theory},
   volume={51},
   date={1995},
   number={2},
   pages={233--248},
   issn={0022-314X},
   review={\MR{1326747}},
   doi={10.1006/jnth.1995.1045},
}

\bib{Li1}{book}{
   author={Li, Wen-Wei},
   title={Zeta integrals, Schwartz spaces and local functional equations},
   series={Lecture Notes in Mathematics},
   volume={2228},
   publisher={Springer, Cham},
   date={2018},
   pages={viii+139},
   isbn={978-3-030-01287-8},
   isbn={978-3-030-01288-5},
   review={\MR{3839636}},
   doi={10.1007/978-3-030-01288-5},
}

\bib{Li2}{article}{
   author={Li, Wen-Wei},
   title={Towards generalized prehomogeneous zeta integrals},
   conference={
      title={Relative aspects in representation theory, Langlands
      functoriality and automorphic forms},
   },
   book={
      series={Lecture Notes in Math.},
      volume={2221},
      publisher={Springer, Cham},
   },
   date={2018},
   pages={287--318},
   review={\MR{3839700}},
}

\bib{Li3}{article}{
   author={Li, Wen-Wei},
   title={Generalized zeta integrals on certain real prehomogeneous vector spaces},
    journal={arXiv e-prints},
   date={2019},
}

\bib{Li4}{article}{
   author={Li, Wen-Wei},
   title={Variations on themes of Sato},
    journal={J. Number Theory, in press},
   date={2020},
   doi={10.1016/j.jnt.2020.09.013},
}

\bib{LiX}{article}{
   author={Li, Xiannan},
   title={Upper bounds on $L$-functions at the edge of the critical strip},
   journal={Int. Math. Res. Not. IMRN},
   date={2010},
   number={4},
   pages={727--755},
   issn={1073-7928},
   review={\MR{2595006}},
   doi={10.1093/imrn/rnp148},
}

\bib{OT}{article}{
   author = {Oshita, Kazuki},
   author = {Tsuzuki, Masao},
   title = {Functional equations and gamma factors of local zeta functions for the metaplectic cover of $\SL_2$},
   journal = {J. Number Theory},
   volume = {259},
   pages = {57--81},
   year = {2024},
}

\bib{PR}{book}{
   author={Platonov, Vladimir},
   author={Rapinchuk, Andrei},
   title={Algebraic groups and number theory},
   series={Pure and Applied Mathematics},
   volume={139},
   note={Translated from the 1991 Russian original by Rachel Rowen},
   publisher={Academic Press, Inc., Boston, MA},
   date={1994},
   pages={xii+614},
   isbn={0-12-558180-7},
   review={\MR{1278263}},
}

\bib{Saito}{article}{
   author={Saito, Hiroshi},
   title={On Tunnell's formula for characters of ${\rm GL}(2)$},
   journal={Compositio Math.},
   volume={85},
   date={1993},
   number={1},
   pages={99--108},
   issn={0010-437X},
   review={\MR{1199206}},
}

\bib{Saito0}{article}{
   author={Saito, Hiroshi},
   title={Explicit formula of orbital $p$-adic zeta functions associated to
   symmetric and Hermitian matrices},
   journal={Comment. Math. Univ. St. Paul.},
   volume={46},
   date={1997},
   number={2},
   pages={175--216},
   issn={0010-258X},
   review={\MR{1471829}},
}

\bib{Saito1}{article}{
   author={Saito, Hiroshi},
   title={Explicit form of the zeta functions of prehomogeneous vector
   spaces},
   journal={Math. Ann.},
   volume={315},
   date={1999},
   number={4},
   pages={587--615},
   issn={0025-5831},
   review={\MR{1731463}},
   doi={10.1007/s002080050330},
}

\bib{Saito2}{article}{
   author={Saito, Hiroshi},
   title={Convergence of the zeta functions of prehomogeneous vector spaces},
   journal={Nagoya Math. J.},
   volume={170},
   date={2003},
   pages={1--31},
   issn={0027-7630},
   review={\MR{1994885}},
   doi={10.1017/S0027763000008515},
}

\bib{Sato}{article}{
   author={Sato, Fumihiro},
   title={Zeta functions of prehomogeneous vector spaces with coefficients
   related to periods of automorphic forms},
   note={K. G. Ramanathan memorial issue},
   journal={Proc. Indian Acad. Sci. Math. Sci.},
   volume={104},
   date={1994},
   number={1},
   pages={99--135},
   issn={0253-4142},
   review={\MR{1280061}},
   doi={10.1007/BF02830877},
}

\bib{Sato2}{article}{
   author={Sato, Fumihiro},
   title={Zeta functions with polynomial coefficients associated with
   prehomogeneous vector spaces},
   journal={Comment. Math. Univ. St. Paul.},
   volume={45},
   date={1996},
   number={2},
   pages={177--211},
   issn={0010-258X},
   review={\MR{1416190}},
}

\bib{Sato3}{article}{
   author={Sato, Fumihiro},
   title={Zeta functions of $({\rm SL}_2\times{\rm SL}_2\times{\rm
   GL}_2,{\bf M}_2\oplus{\bf M}_2)$ associated with a pair of Maass cusp
   forms},
   journal={Comment. Math. Univ. St. Pauli},
   volume={55},
   date={2006},
   number={1},
   pages={77--95},
   issn={0010-258X},
   review={\MR{2252001}},
}

\bib{SK}{article}{
   author={Sato, Mikio},
   author={Kimura, Tatsuo},
   title={A classification of irreducible prehomogeneous vector spaces and
   their relative invariants},
   journal={Nagoya Math. J.},
   volume={65},
   date={1977},
   pages={1--155},
   issn={0027-7630},
   review={\MR{430336}},
}

\bib{SS}{article}{
   author={Sato, Mikio},
   author={Shintani, Takuro},
   title={On zeta functions associated with prehomogeneous vector spaces},
   journal={Ann. of Math. (2)},
   volume={100},
   date={1974},
   pages={131--170},
   issn={0003-486X},
   review={\MR{344230}},
   doi={10.2307/1970844},
}

\bib{Shintani}{article}{
   author={Shintani, Takuro},
   title={On zeta-functions associated with the vector space of quadratic
   forms},
   journal={J. Fac. Sci. Univ. Tokyo Sect. I A Math.},
   volume={22},
   date={1975},
   pages={25--65},
   review={\MR{0384717}},
}

\bib{SW}{article}{
   author={Suzuki, Miyu},
   author={Wakatsuki, Satoshi},
   title={Explicit mean value theorems for toric periods and automorphic $L$-functions},
    eprint={arXiv:2103.04589},
   date={2021},
}

\bib{SWY}{article}{
   author={Suzuki, Miyu},
   author={Wakatsuki, Satoshi},
   author={Yokoyama, Shun'ichi},
   title={Distribution of toric periods of modular forms on definite
   quaternion algebras},
   journal={Res. Number Theory},
   volume={8},
   date={2022},
   number={4},
   pages={Paper No. 90, 33},
   issn={2522-0160},
   review={\MR{4496694}},
   doi={10.1007/s40993-022-00389-8},
}

\bib{Tadic}{article}{
   author={Tadi\'{c}, Marko},
   title={Spherical unitary dual of general linear group over
   non-Archimedean local field},
   language={English, with French summary},
   journal={Ann. Inst. Fourier (Grenoble)},
   volume={36},
   date={1986},
   number={2},
   pages={47--55},
   issn={0373-0956},
   review={\MR{850742}},
}

\bib{Taniguchi1}{article}{
   author={Taniguchi, Takashi},
   title={On the zeta functions of prehomogeneous vector spaces for a pair
   of simple algebras},
   language={English, with English and French summaries},
   journal={Ann. Inst. Fourier (Grenoble)},
   volume={57},
   date={2007},
   number={4},
   pages={1331--1358},
   issn={0373-0956},
   review={\MR{2339334}},
}

\bib{Taniguchi2}{article}{
   author={Taniguchi, Takashi},
   title={A mean value theorem for the square of class number times
   regulator of quadratic extensions},
   language={English, with English and French summaries},
   journal={Ann. Inst. Fourier (Grenoble)},
   volume={58},
   date={2008},
   number={2},
   pages={625--670},
   issn={0373-0956},
   review={\MR{2410385}},
}

\bib{Tunnell}{article}{
   author={Tunnell, Jerrold B.},
   title={Local $\epsilon $-factors and characters of ${\rm GL}(2)$},
   journal={Amer. J. Math.},
   volume={105},
   date={1983},
   number={6},
   pages={1277--1307},
   issn={0002-9327},
   review={\MR{721997}},
   doi={10.2307/2374441},
}

\bib{Wal1}{article}{
   author={Waldspurger, J.-L.},
   title={Correspondance de Shimura},
   journal={J. Math. Pures et Appl.},
   volume={59},
   date={1980},
   number={1},
   pages={1-132},
   issn={0021-7824},
   review={\MR{0577010}},
}

\bib{Wal2}{article}{
   author={Waldspurger, J.-L.},
   title={Sur les valeurs de certaines fonctions $L$ automorphes en leur centre de sym\'etrie},
   journal={Compositio Math.},
   volume={54},
   date={1985},
   number={2},
   pages={173-242},
   issn={0010-437X},
   review={\MR{783511}},
}

\bib{Wal3}{article}{
   author={Waldspurger, J.-L.},
   title={Quelques propri\'et\'es arithm\'etiques de certaines formes automorphes sur $\mathrm{GL}(2)$},
   journal={Compositio Math.},
   volume={54},
   date={1985},
   number={2},
   pages={121-171},
   issn={0010-437X},
   review={\MR{783510}},
   doi={10.1515/form.1991.3.219},
}

\bib{Wal4}{article}{
   author={Waldspurger, J.-L.},
   title={Correspondances de Shimura et quaternions},
   journal={Forum Math.},
   volume={3},
   date={1991},
   number={3},
   pages={219-307},
   issn={0933-7741},
   review={\MR{1103429}},
}

\bib{WY}{article}{
   author={Wright, David J.},
   author={Yukie, Akihiko},
   title={Prehomogeneous vector spaces and field extensions},
   journal={Invent. Math.},
   volume={110},
   date={1992},
   number={2},
   pages={283--314},
   issn={0020-9910},
   review={\MR{1185585}},
   doi={10.1007/BF01231334},
}

\bib{Yukiebook}{book}{
   author={Yukie, Akihiko},
   title={Shintani zeta functions},
   series={London Mathematical Society Lecture Note Series},
   volume={183},
   publisher={Cambridge University Press, Cambridge},
   date={1993},
   pages={xii+339},
   isbn={0-521-44804-2},
   review={\MR{1267735}},
}

\bib{Yukie}{article}{
   author={Yukie, Akihiko},
   title={On the Shintani zeta function for the space of pairs of binary
   Hermitian forms},
   journal={J. Number Theory},
   volume={92},
   date={2002},
   number={2},
   pages={205--256},
   issn={0022-314X},
   review={\MR{1884702}},
   doi={10.1006/jnth.2001.2707},
}

\end{biblist}
\end{bibdiv}

\end{document}